\theoremstyle{plain}
\newtheorem{theo}{Theorem}[section]%[Definizioni]
\newtheorem{lem}[theo]{Lemma}%[Definizioni]
\newtheorem{prop}[theo]{Proposition}%[Definizioni]
\newtheorem{cor}[theo]{Corollary}%
\theoremstyle{definition}
\newtheorem{definition}[theo]{Definition}%[Definizioni]
\theoremstyle{remark}
\newtheorem{rem}[theo]{Remark}%[Definizioni]
\numberwithin{equation}{section}
\newcommand{\R}{\mathbb{R}}
\newcommand{\N}{\mathbb{N}}
\title{Stability Estimates for an Inverse Hyperbolic Initial Boundary Value Problem with Unknown Boundaries}
\author
{Sergio Vessella\thanks{Universit\`a degli Studi di Firenze, Italy, E-mail:
\textsf{sergio.vessella@unifi.it}}}
\date{}
\begin{document}

\setcounter{section}{0}
\setcounter{secnumdepth}{2}

\maketitle

\begin{abstract}
In this paper we prove stability estimates of logarithmic type for an inverse problem consisting in the determination of unknown portions of the boundary of a domain in $\mathbb{R}^n$, from a knowledge, in a finite time observation, of overdetermined boundary data for initial boundary value problem for anisotropic wave equation.
\medskip

\noindent\textbf{Mathematics Subject Classification (2010)}
Primary 35R30, 35A25 Secondary 35R25, 35L

\medskip

\noindent \textbf{Keywords}
Inverse Problems, Stability Estimates, Hyperbolic Equations.
\end{abstract}

\section{Introduction} \label{sec:intro}
The reconstruction of obstacles from scattering waves has been widely investigated \cite{Col-Kr}. This approach requires enough information on the scattered amplitude and generally infinitely many boundary measurements. In many practical situations these data are not available, for example in physical situations where only transient waves are detectable and one measurement in a finite time observation is obtainable. A typical situation is described by the wave equation in a domain $\Omega$ in $\mathbb{R}^n$ ($n\geq 2$) whose boundary, assumed sufficiently smooth, consists of two non overlapping portion $\Gamma^{(a)}$ (accessible portion) and $\Gamma^{(i)}$ (inaccessible portion) where $\Gamma^{(i)}$ is a not known obstacle.

In the case in which $\Gamma^{(i)}$ is a soft obstacle the mathematical problem is represented by the following initial boundary value problem (the direct problem). Given a nontrivial function $\psi$ on $\partial\Omega\times[0,T]$, $0<T<+\infty$, such that
$$\psi=0 \mbox{ , on } \Gamma^{(i)}\times(0,T),$$
let $u$ be the (weak) solution to the following problem
\begin{equation}
\label{1-141}
\left\{\begin{array}{ll}
\partial^2_{t}u-\mbox{div}\left(A(x)\nabla_x u\right)=0, \quad \hbox{in } \Omega\times [0,T],\\[2mm]
u_{|\partial\Omega\times [0,T]}=\psi, \quad \hbox{on } \partial\Omega\times [0,T],\\[2mm]
u(\cdot,0)=\partial_tu(\cdot,0)=0, \quad \hbox{in } \Omega,
\end{array}\right.
\end{equation}
($\mbox{div}:=\sum_{j=1}^n\partial_{x_j}$) where $A(x)=\left\{a^{ij}(x)\right\}^n_{i,j=1}$ denotes a known symmetric matrix which satisfies a hypothesis of uniform ellipticity and some smoothness conditions that we shall specify in the sequel of the paper.
The inverse problem, we are interested in, is to determine $\Gamma^{(i)}$ from the knowledge of
\begin{equation}
\label{flux}
A(x)\nabla_x u(x,t)\cdot\nu\mbox{, on } \Sigma\times (0,T),
\end{equation}
where $\Sigma\subset\Gamma^{(a)}$ and $\nu$ denotes the exterior unit normal to $\Omega$.

The uniqueness for the above inverse problem has been proved in \cite{Is2}, however, in contrast to the analogues problems for elliptic equations or systems \cite{A-B-R-V}, \cite{BeVe}, \cite{CheHY}, \cite{M-R1}, \cite{M-R2}, \cite{M-R-V2} and parabolic equations \cite{CRoVe1}, \cite{CRoVe2}, \cite{DcRVe}, \cite{Ve1}, \cite{Ve2}, the stability issue in the hyperbolic context is much less studied.
In this paper we are interested in the stability issue for the inverse problem above. More precisely we are interested in the continuous dependence of $\Gamma^{(i)}$ from the Cauchy data $u$, $A\nabla_x u\cdot\nu$ on $\Sigma\times (0,T)$.
Here we prove a logarithmic stability estimate under some a priori information on the domain $\Omega$, on $\Gamma^{(i)}$, on $\psi$ and whenever $T$ is large enough, but \textit{finite and independent by the errors on the Cauchy data}. In view of John counterexample \cite{J} it is reasonable to expect that the logarithmic rate of stability is the optimal one. We are currently work on that topic

Now we describe briefly the main tools that we use to prove the stability result.

\textit{(a) Stability Estimates for Cauchy Problem and Smallness Propagation Estimates}. In order to determine the unknown portion of boundary $\Gamma^{(i)}$ it seems necessary to determine the values of $u$ from Cauchy data on $\Sigma\times (0,T)$ up to $\Gamma^{(i)}\times (0,T')$ for suitable $T'<T$. More precisely, let $\Omega_1$ and $\Omega_2$ be two domains whose boundary agree on $\Gamma^{(a)}$ and and let $u_j$ be the solutions of \eqref{1-141} for $\Omega=\Omega_j$, $j=1,2$. Denote by $G$ the connected component of $\Omega_1\cap\Omega_2$ that contains  $\Gamma^{(a)}$, we need to estimate $u_1-u_2$ in $G\times(0,T')$ in terms of the error on the Cauchy data on $\Sigma\times (0,T)$. In order to gain such estimates we use the method introduced by Robbiano in \cite{Ro_Hyp} and \cite{Ro_Cau} based on the Fourier Bros Iagolnitzer (FBI) transform defined by
\begin{equation*}
\label{defFBI}
U(x,y):=\sqrt{\frac{\mu}{2\pi}}\int^T_0e^{-\frac{\mu}{2}(iy+\tau-t)^2}\left(u_1-u_2\right)dt, \quad \hbox{for every } (x,y)\in G\times\mathbb{R},
\end{equation*}
where $\mu$ be a positive number and $\tau\in(0,T)$.

By applying such a FBI transform the wave equation is transformed in a second order elliptic equation in $G\times \mathbb{R}$ with a nonhomogeneous term $f$ depending on the final values $(u_1-u_2)(\cdot,T),\partial_t(u_1-u_2)(\cdot,T)$ and on $\mu$. Since, roughly speaking, $f$ is small whether $\mu$ and $T$ are large and $U(\cdot,0)$ is close to  $(u_1-u_2)(\cdot,\tau)$ for large $\mu$, we can apply the estimates for the Cauchy problem for elliptic equations proved in \cite{A-R-R-V} and we can obtain useful estimates of  $u_1-u_2$ in $G\times(0,T')$. We wish to stress that here we have, with respect to \cite{Ro_Hyp} and \cite{Ro_Cau}, the additional difficulty that the boundary of $G$ might be irregular.

\textit{(b) Quantitative estimates of strong unique continuation for wave equations.} For our proof it is crucial to know that the vanishing rate of $u$ near the unknown boundary $\Gamma^{(i)}$ is of polynomial type. Namely we need quantitative estimates of strong unique continuation at the interior and at the boundary. Such estimates have been proved in  \cite{Ve3} (in the present paper, Theorems \ref{5-115} and \ref{5-115Boundary}). It is exactly this property that allows us to obtain a sharp estimate of the Hausdorff distance, $d_{\mathcal{H}}\left(\overline{\Omega}_1,\overline{\Omega}_2\right)$, of the unknown domains $\Omega_1, \Omega_2$ in terms of the error on the Cauchy data (Corollary \ref{CorStimaforte}).
The use of quantitative estimate of strong unique continuation is not new in inverse problem with unknown boundaries. The first paper in which such quantitative estimates was successfully used is, in the elliptic context, \cite{A-B-R-V}. Afterwards, quantitative estimates of strong unique continuation have been used and proved also for the parabolic problems, we refer to the papers mentioned above and the review paper \cite{Ve2}. To the authors knowledge the quantitative estimate of strong unique continuation was never used before in the context of hyperbolic inverse problems.

\textit{(c) Lemma of relative graphs and sharp three sphere inequality.} At a first stage the estimate of $d_{\mathcal{H}}\left(\overline{\Omega}_1,\overline{\Omega}_2\right)$ is worse than logarithmic and, in addition, the observation time $T$ for which such estimate is available may depend on the error on the Cauchy data. In order to obtain the logarithmic stability estimate for $T$ finite and independent by the errors on the Cauchy data we combine the geometric Lemma of relative graphs (Lemma \ref{Pr4.8}) and a three sphere inequality for elliptic equations whose exponent is sharply evaluated when the radii of the three balls are close to each other (Theorem \ref{new three sphere}). This point is the most delicate part of the proof and is developed in Section \ref{step3}.

The plan of the paper is as follows.

In Section \ref{sec:notation} we will introduce the main notation and definition.

In Section \ref{maintheorem} we will state the main Theorem \ref{MainTheo}.

The Section \ref{preliminary} contains some preliminary results concerning the quantitative estimates of strong unique continuation (Subsection \ref{QEsucp}), a regularity result for hyperbolic equation (Subsection \ref{Regularity-enunciato}), some elementary estimates for the FBI transform (Subsection \ref{FBI}) and a sharp form of the three sphere inequality for elliptic equations (Subsection \ref{threeSphere}).

In Section \ref{proofmain} we prove the main Theorem \ref{MainTheo}.

In the Appendix (Section \ref{appendix}) we prove some results of Section \ref{preliminary}.

\section{Notation and Definition} \label{sec:notation}
Let $n\in\mathbb{N}$, $n\geq 2$. For any $x\in \R^n$, we will denote $x=(x',x_n)$, where $x'=(x_1,\ldots,x_{n-1})\in\R^{n-1}$, $x_n\in\R$ and $|x|=\left(\sum_{j=1}^nx_j^2\right)^{1/2}$.
Given $x\in \R^n$, $r>0$, we will use the following notation for balls and cylinders.
\begin{equation*}
   B_r(x)=\{y\in \R^n\ :\ |y-x|<r\}, \quad  B_r=B_r(0),
\end{equation*}
\begin{equation*}
   B'_r(x')=\{y'\in \R^{n-1}\ :\ |y'-x'|<r\}, \quad  B'_r=B'_r(0),
\end{equation*}
\begin{equation*}
   Q_{a,b}(x)=\{y=(y',y_n)\in \R^n\ :\ |y'-x'|<a, |y_n-x_n|<b\}, \quad Q_{a,b}=Q_{a,b}(0).
\end{equation*}
For any $x\in\mathbb{R}^n$  $x=\left(x_1\ldots,x_n\right)$ and any $r>0$ we denote by $\widetilde{x}\in\mathbb{R}^{n+1}$ the point $\widetilde{x}=\left(x_1\ldots,x_n,0\right)$, or shortly $\widetilde{x}=\left(x,0\right)$ and by $\widetilde{B}_r(\widetilde{x})$ the ball of $\mathbb{R}^{n+1}$ of radius $r$ centered at $\widetilde{x}$. For any open set $\Omega\subset\mathbb{R}^n$ and any function (smooth enough) $u$  we denote by $\nabla_x u=(\partial_{x_1}u,\cdots, \partial_{x_n})$ the gradient of $u$. Also, for the gradient of $u$ we use the notation $D_x$. If $j=0,1,2$ we denote by $D^j_x u$ the set of the derivatives of $u$ of order $j$, so $D^0_x u=u$, $D^1_x u=\nabla_x u$ and $D^2_x$ is the hessian matrix $\{\partial_{x_ix_j}u\}_{i,j=1}^n$. Similar notation are used whenever other variables occur and $\Omega$ is an open subset of $\mathbb{R}^{n-1}$ or a subset $\mathbb{R}^{n+1}$. By $H^{\ell}(\Omega)$, $\ell=0,1,2$ we denote the usual Sobolev spaces of order $\ell$, in particular we have $H^0(\Omega)=L^2(\Omega)$.

For any interval $J\subset \mathbb{R}$ and $\Omega$ as above we denote by
 \[\mathcal{W}\left(J;\Omega\right)=\left\{u\in C^0\left(J;H^2\left(\Omega\right)\right): \partial_t^\ell u\in C^0\left(J;H^{2-\ell}\left(\Omega\right)\right), \ell=1,2\right\}.\]

\bigskip

\begin{definition}[\textbf{${C}^{k,1}$ regularity of a domain}]
  \label{def:2.1}
Let $\Omega$ be a bounded domain in ${\R}^{n}$. Given $k\in\N\cup{0}$, we say that a portion $S$ of
$\partial \Omega$ is of \textit{class ${C}^{k,1}$ with
constants $\rho_{0}$, $E>0$}, if, for any $P \in S$, there
exists a rigid transformation of coordinates under which we have
$P=0$ and
\begin{equation*}
  \Omega \cap Q_{\frac{\rho_0}{E},\rho_0}=\{x=(x',x_n) \in Q_{\frac{\rho_0}{E},\rho_0}\quad | \quad
x_{n}>\varphi(x')
  \},
\end{equation*}
where $\varphi$ is a ${C}^{k,1}$ function on
$B'_{\frac{\rho_0}{E}}$ satisfying
\begin{equation*}
\|\varphi\|_{{C}^{k,1}(B'_{\rho_0/E})} \leq E\rho_{0},
\end{equation*}

\begin{equation*}
\varphi(0)=0,
\end{equation*}
and, whenever $k\geq 1$,

\begin{equation*}
\nabla_{x'}\varphi (0)=0.
\end{equation*}

\medskip
\noindent When $\partial\Omega$ is of class ${C}^{k,1}$ with
constants $\rho_{0}$, $E>0$ we also say that $\Omega$ is of class ${C}^{k,1}$ with
constants $\rho_{0}$, $E>0$. Moreover, when $k=0$ we also say that $S$ is of
\textit{Lipschitz class with constants $\rho_{0}$, $E$}.
\end{definition}

\bigskip

\begin{rem}
We use the convention of normalizing all norms in such a way that all their terms are dimensionally homogeneous. For example:
\[
\|\varphi\|_{C^{0,1}(B_{r_0}^\prime)}=\|\varphi\|_{L^{\infty}(B_{r_0}^\prime)}+r_0\|\nabla_{x'}\varphi\|_{L^{\infty}(B_{r_0}^\prime)}.
\]
Similarly, if $u\in H^m(\Omega)$, where $\Omega$ is a domain of $\mathbb{R}^n$ of class ${C}^{k,1}$ with
constants $\rho_{0}$, $E$  denoting by $D^j u$ the vector which components are the derivatives of order $j$ of the function $u$,
\begin{equation*}
\|u\|_{H^m(\Omega)}=\rho_0^{-n/2}\left(\sum_{j=0}^m
\rho_0^{2j}\int_\Omega|D_x^j u|^2\right)^{\frac{1}{2}},
\end{equation*}

\begin{equation*}
  \|u\|_{{C}^{k}(\Omega)} =\sum_{i=0}^{k}
  {r_{0}}^{i}\|D_x^{i} u\|_{{L}^{\infty}(\Omega)}.
\end{equation*}

\end{rem}

\begin{definition}
\label{Defgrafrel}\textbf{(relative graphs)}. We shall say that two bounded
domains $\Omega_1$ and $\Omega_2$ in $\mathbb{R}^n$ of class $
C^{1,1 }$ with constants $\rho_0,E$ are \textit{relative graphs} if for
any $P\in \partial \Omega_1$ there exists a rigid transformation of
coordinates under which we have $P\equiv 0$ and there exist $\varphi_{P,1},\varphi_{P,2}\in C^{1,1}\left(B_{r_0}^{\prime }\left(0\right) \right)$, where $\frac{r_0}{\rho_0}\leq 1$ depends on $E$  only, satisfying the following conditions

\begin{subequations}
\label{relgraph}
\begin{equation}
\label{relgraph-a}
\varphi _{P,1}\left( 0\right) =\left \vert \nabla_{x'} \varphi _{P,1}\left(
0\right) \right \vert=0 \quad\mbox{, } \left \vert \varphi _{P,2}\left(
0\right) \right \vert \leq \dfrac{r_{0}}{2},
\end{equation}
\begin{equation}
\label{relgraph-b}
\left \Vert\varphi_{P,i}\right \Vert_{C^{1,1}\left(
B_{r_{0}}^{\prime }\left( 0\right) \right) }\leq E\rho_0,
\end{equation}
\begin{equation}
\label{relgraph-c}
\Omega_i\cap B_{r_{0}}\left(0\right) =\left\{x\in B_{r_{0}}\left(
0\right) :x_n>\varphi_{P,i}\left( x^{\prime }\right)\right\}\mbox{, } i=1,2.
\end{equation}
\end{subequations}

\noindent We shall denote
\begin{equation}
\label{4.304}
\gamma_0\left(\Omega_1,\Omega_2\right) =\sup_{P\in \partial
\Omega_1}\left\Vert\varphi_{P,1}-\varphi_{P,2}\right \Vert_{L^{\infty
}\left( B_{r_{0}}^{\prime }\left( 0\right) \right) }
\end{equation}
and, for any $\alpha\in(0,1]$,
\begin{equation}
\label{4.304-alpha}
\gamma_{1,\alpha}\left(\Omega_1,\Omega_2\right) =\sup_{P\in \partial
\Omega_1}\left\Vert\varphi_{P,1}-\varphi_{P,2}\right \Vert_{C^{1,\alpha
}\left( B_{r_{0}}^{\prime }\left( 0\right) \right) }.
\end{equation}

\end{definition}

\bigskip

\begin{definition}
\label{Defhaus}\textbf{(Hausdorff distance). }Let $\Omega_1$ and $\Omega_2$ be bounded domains in $\mathbb{R}^n$. We call Hausdorff distance
between $\Omega_{1}$ and $\Omega_2$ the number
\begin{equation}
\label{4.350}
d_{\mathcal{H}}\left(\overline{\Omega}_1,\overline{\Omega}_2\right) =\max\left\{\sup\limits_{x\in
\Omega_1}\mbox{dist}\left( x,\overline{\Omega}_2\right)
,\sup\limits_{x\in\Omega_2}\mbox{dist}\left(x,\overline{\Omega}_1
\right)\right\}.
\end{equation}
\end{definition}

\bigskip

\begin{definition}
\label{Def4.7}\textbf{(modified distance). }Let $\Omega_1$ and $\Omega_2$ be bounded domains in $\mathbb{R}^n$. We call modified distance
between $\Omega_{1}$ and $\Omega_2$ the number
\begin{equation}
\label{4.350}
d_m\left(\overline{\Omega}_1,\overline{\Omega}_2\right) =\max\left\{\sup\limits_{x\in
\partial\Omega_1}\mbox{dist}\left( x,\overline{\Omega}_2\right)
,\sup\limits_{x\in\partial\Omega_2}\mbox{dist}\left(x,\overline{\Omega}_1
\right)\right\}.
\end{equation}
\end{definition}

For any open set $\Omega\subset\mathbb{R}^n$ and $r>0$, we shall denote
\begin{equation*}
\Omega_r=\left\{x\in\Omega: \hbox{dist}(x,\partial\Omega)> r\right\}.
\end{equation*}

We shall use the the letters $C$ to denote constants larger or equal than $1$. Sometime, for special constants or to emphasize the role that it have in the proof we will use the notation $C_0, C_1, \ldots$. The value of the constants may change from line to line, but we shall specified their dependence everywhere they appear.

\section{The Inverse Problem: The Main Theorem}\label{maintheorem}

\noindent
{\it i) A priori information on the domain.}

Given $\rho_0$, $M>0$, $E\geq 1$ we assume
\begin{subequations}
\label{1-138}
\begin{equation}
\label{1-138a}
|\Omega|\leq M\rho_0^n,
\end{equation}
\begin{equation}
\label{1-138b}
\partial\Omega \quad \hbox{ of class } C^{1,1} \hbox{ with constants } \rho_0 \hbox{ and } E,
\end{equation}
\end{subequations}
here, and in the sequel, $|\Omega|$ denotes the Lebesgue
measure of $\Omega$.

Let $\Gamma^{(a)}$ be a nonempty closed proper subset of
$\partial\Omega$ and assume that the closure of the interior part of $\Gamma^{(a)}$ in the relative
topology in $\partial\Omega$ is equal to $\Gamma^{(a)}$. In addition we assume that
\begin{equation}\label{Gammaconnected}
\mbox{Int}_{\partial\Omega} \left(\Gamma^{(a)}\right) \quad \hbox{ is connected, }
\end{equation}
and we set
\begin{equation}\label{3-138}
\Gamma^{(i)}=\partial\Omega\setminus \mbox{Int}_{\partial\Omega} \left(\Gamma^{(a)}\right) ,
\end{equation}
here and in the sequel, $\mbox{Int}_{\partial\Omega} \left(\Gamma^{(a)}\right)$ is the interior part of $\Gamma^{(a)}$ in the relative topology in $\partial\Omega$.
In the sequel we will refer to $\Gamma^{(a)}$ and $\Gamma^{(i)}$ as the \textit{accessible} and \textit{inaccessible} part of $\partial \Omega$ respectively.

Moreover denoting
\begin{equation*}
\Gamma^{(a)}_{\rho}=\left\{x\in\Gamma^{(a)}: \hbox{dist}(x,\Gamma^{(i)})\geq \rho\right\},
\end{equation*}
we assume that, for any $\rho\in(0,\rho_0]$, $\Gamma^{(a)}_{\rho}$ is a nonempty and connected set and we assume that we can select a portion $\Sigma$ satisfying for some $P_0\in\Sigma$
\begin{equation}\label{4a-138}
\partial\Omega\cap
B_{\rho_0}(P_0)\subset\Sigma \subset \Gamma^{(a)}_{\rho_0}.
\end{equation}
\noindent
\begin{rem} Observe that \eqref{1-138b} automatically
implies a lower bound on the diameter of every connected
component of $\partial\Omega$. Moreover, by combining
\eqref{1-138a} with \eqref{1-138b}, an upper bound on the diameter of
$\Omega$ can also be obtained. Note also that
\eqref{1-138a}, \eqref{1-138a} implicitly comprise an a priori upper bound
on the number of connected components of $\partial\Omega$. Finally observe that the hypotheses \eqref{1-138}-\eqref{4a-138} are satisfied in the case $\Omega=\widehat{\Omega}\setminus\overline{D}$, where $\widehat{\Omega}$ and $D$ are two open domains in $\mathbb{R}^n$ whose boundaries, $\partial\widehat{\Omega}$ and $\partial D$, are connected, $D\subset \widehat{\Omega}$, $\mbox{dist}(D,\partial \Omega)\geq 2\rho_0$ and $\widehat{\Omega}, D$ satisfy condition $\eqref{1-138}$. In addition $\Gamma^{a}=\partial\widehat{\Omega}$, $\Gamma^{i}=\partial D$ and $\Sigma$ is a portion of $\partial\widehat{\Omega}$ satisfying, for some  $P_0\in\Sigma$, the condition $\partial\widehat{\Omega}\cap
B_{\rho_0}(P_0)\subset\Sigma$.
\end{rem}

\medskip
\noindent
{\it ii) Assumptions about the boundary data.}

Let $m:=\left[\frac{n+2}{4}\right]$. Assume that $\psi$ is a function on $\partial\Omega\times [0,+\infty)$ which satisfies the following conditions

\begin{subequations}
\label{psi}
\begin{equation}
\partial_t^j\psi(\cdot,t)\in C^{1,1}(\partial\Omega)\quad \hbox{, for } j\in\{0,\cdots, 2m+4\} \hbox{, and } t\in [0,+\infty),
\end{equation}
\begin{equation}
\label{2-s13b}
 \partial_t^j\psi(\cdot,0)=0 \quad \hbox{, for } j\in\{0,\cdots, 2m+4\}\hbox{, and } t\in [0,+\infty).
\end{equation}
\end{subequations}

Denote, for $t\in[0,+\infty)$
\begin{equation}
\label{H-t}
H(t)=\sum^{2m+4}_{j=0}\rho^{j}_0\sup_{\xi\in[0,t]}\left\Vert \partial^j_\xi \psi(\cdot,\xi)\right\Vert_{C^{1,1}(\partial\Omega)}.
\end{equation}

Let $t_1\geq\rho_0$ and assume
\begin{equation}
\label{EFFEapb}
\frac{H(t_1)}
{\left\Vert\psi\right\Vert_{L^{\infty}\left(\Gamma^{(a)}\times[0,t_1]\right)}}\leq F.
\end{equation}

\medskip
\noindent
{\it iii) Assumptions about the matrix $A$.}

$A(x)=\left\{a^{ij}(x)\right\}^n_{i,j=1}$ is assumed to be a real-valued symmetric $n\times n$ matrix whose the entries are measurable function and satisfying the following conditions for given constants $\lambda\in(0,1]$, $\Lambda>0$,
\begin{subequations}
\label{1-65}
\begin{equation}
\label{1-65a}
\lambda\left\vert\xi\right\vert^2\leq A(x)\xi\cdot\xi\leq\lambda^{-1}\left\vert\xi\right\vert^2, \quad \hbox{for every } x, \xi\in\mathbb{R}^n,
\end{equation}
\begin{equation}
\label{2-65}
\left\vert A(x)-A(y)\right\vert\leq\frac{\Lambda}{\rho_0} \left\vert x-y \right\vert, \quad \hbox{for every } x, y\in\mathbb{R}^n.
\end{equation}
\end{subequations}

%In the sequel, we shall refer to the set of constants
%$\lambda$, $\Lambda$, $E$, $M$, $F$,
%as to the {\it a priori data}.

\bigskip

\begin{theo}\label{MainTheo}
Let $\Omega_1$, $\Omega_2$ be two
domains satisfying \eqref{1-138}. Let
$\Gamma^{(a)}_j$, $\Gamma^{(i)}_j=\partial\Omega_j\setminus \mbox{Int}_{\partial\Omega_j} (\Gamma^{(a)}_j)$, $j=1,2$, be the corresponding accessible
and inaccessible parts of their boundaries. Let us assume
$\Gamma^{(a)}_1=\Gamma^{(a)}_2=\Gamma^{(a)}$, $\Omega_1$, $\Omega_2$ lie on the
same side of $\Gamma^{(a)}$ and that \eqref{Gammaconnected}, \eqref{3-138} and \eqref{4a-138}  are satisfied.

Then there exists a constant $C$ depending on $\lambda, \Lambda, E, M$ and $F$ only such that if $T=\max\{C\rho_0,2t_1\}$ then the following holds true.

Let $u_j\in\mathcal{W}\left([0,T];\Omega\right)$ be
the solution to \eqref{1-141} when $\Omega=\Omega_j$, $j=1,2$,
and if, for a given $\varepsilon\in(0,e^{-1})$, we have
\begin{equation}
\label{3-141}
\int^{T}_{0}\int_{\Sigma} \left\vert A(x)\nabla u_1\cdot\nu-A(x)\nabla u_2\cdot\nu\right\vert^2dSdt\leq T\rho_0^{n-3}\varepsilon^2,
\end{equation}
where $dS$ is the surface element in dimension $n-1$, then we have

\begin{equation}
\label{estimate}
d_{\mathcal{H} }({\overline\Omega_1},{\overline\Omega_2})
\leq C_{\star}\rho_0|\log \varepsilon|^{-1/C_{\star}},
\end{equation}
where $C_{\star}$ depends on  $\lambda, \Lambda, E, M, F$ and the ratio $\frac{H(T)}{H(t_1)}$.
\end{theo}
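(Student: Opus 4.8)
The plan is to carry out the three-part programme outlined in the Introduction. Write $G$ for the connected component of $\Omega_1\cap\Omega_2$ whose boundary contains $\Gamma^{(a)}$, and put $w:=u_1-u_2$ in $G\times[0,T]$. Since $\Gamma^{(a)}_1=\Gamma^{(a)}_2$ and $u_j=\psi$ on $\partial\Omega_j$, the function $w$ solves $\partial^2_t w-\mathrm{div}(A\nabla_x w)=0$ in $G\times[0,T]$ with vanishing initial data, $w=0$ on $\Sigma\times(0,T)$, and, by \eqref{3-141}, $\|A\nabla_x w\cdot\nu\|_{L^2(\Sigma\times(0,T))}\le \rho_0^{(n-3)/2}T^{1/2}\varepsilon$. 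First I would apply the FBI transform $U(x,y)$ of the Introduction: a direct computation turns the wave equation into the elliptic equation $\mathrm{div}(A\nabla_xU)+\partial^2_yU=f$ in $G\times\mathbb{R}$, with $f$ explicit in $\mu,\tau$ and in the final values $w(\cdot,T),\partial_t w(\cdot,T)$. Using the regularity and energy estimate for \eqref{1-141} (Subsection \ref{Regularity-enunciato}) together with \eqref{EFFEapb} and $H(T)$, those final values, hence $\|f\|$, are bounded by a quantity exponentially small in $\mu T^2$ (this is where $T$ must be taken large enough, but finite, so that the domain-of-dependence argument applies); on $\Sigma\times\mathbb{R}$ the Cauchy data of $U$ are controlled, up to $e^{-c\mu}$ corrections, by those of $w$ on $\Sigma\times(0,T)$, hence by $\varepsilon$.

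Next I would invoke the quantitative stability estimate for the Cauchy problem for second-order elliptic equations of \cite{A-R-R-V} to propagate the smallness of $U$ from $\Sigma\times\mathbb{R}$ into the interior of $G\times\mathbb{R}$. The structural novelty, stressed in the Introduction, is that $\partial G$ need not be regular; I would handle this by first propagating inside $G_\rho\times(-L,L)$ for suitable $\rho,L$ and then exploiting the a priori $C^{1,1}$ regularity of $\partial\Omega_1,\partial\Omega_2$ to reach all points lying at distance of order $\rho$ from $\Gamma^{(i)}_1\cup\Gamma^{(i)}_2$. Evaluating at $y=0$, where $\|U(\cdot,0)-w(\cdot,\tau)\|$ is $O(\mu^{-1/2})$, and optimizing $\mu$ against the exponentially small contribution of $f$, one arrives at a bound for $\|w(\cdot,\tau)\|$ on a region reaching to within $\rho$ of $\Gamma^{(i)}$ that is at worst double-logarithmic in $\varepsilon$, valid for $\tau$ in a subinterval of $(0,T)$ and, at this stage, possibly with an observation time depending on $\varepsilon$.

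The geometric and unique-continuation core comes next. Assume, without loss of generality, that $d_{\mathcal{H}}(\overline{\Omega}_1,\overline{\Omega}_2)$ is (essentially) realized by a point of $\partial\Omega_2$ far from $\overline{\Omega}_1$; near it $\partial G$ follows $\Gamma^{(i)}_1$, where $u_1=\psi=0$, so that $u_1$ --- being $C^{1,1}$ --- is small in a thin layer of $G$ above $\Gamma^{(i)}_1$, and hence $u_2=u_1-w$ is small in an open subset of $\Omega_2$ whose distance to $\partial\Omega_2$ is of order $d_{\mathcal{H}}$. Propagating this smallness through $\Omega_2$ down to points at distance $\theta$ from $\Gamma^{(i)}_2$ along a chain of three-sphere inequalities, and using the quantitative strong unique continuation estimates at the interior and at the boundary (Theorems \ref{5-115} and \ref{5-115Boundary}), which guarantee that the vanishing order of $u_2$ near $\Gamma^{(i)}_2$ is at most polynomial and thus provide a matching lower bound of order $\theta^N$ times a fixed interior norm of $u_2$ --- itself bounded below by $c\|\psi\|_{L^\infty(\Gamma^{(a)}\times[0,t_1])}$ by virtue of \eqref{EFFEapb} and \cite{A-R-R-V} --- one lets $\theta$ shrink and obtains $d_{\mathcal{H}}(\overline{\Omega}_1,\overline{\Omega}_2)\le C_\star\rho_0\,\omega(\varepsilon)$ with $\omega$ logarithmic up to the double-log loss; this is essentially Corollary \ref{CorStimaforte}.

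Finally I would remove the double-logarithmic loss and the $\varepsilon$-dependence of $T$, which is the delicate part. Once $d_{\mathcal{H}}$ is below a threshold depending only on $E$, the Lemma of relative graphs (Lemma \ref{Pr4.8}) shows that $\Omega_1,\Omega_2$ are relative graphs, controls $\gamma_0(\Omega_1,\Omega_2)$, and yields $d_{\mathcal{H}}(\overline{\Omega}_1,\overline{\Omega}_2)\le C\,d_m(\overline{\Omega}_1,\overline{\Omega}_2)$; one then re-runs the smallness propagation along a chain of balls joining $\Sigma$ to the contact point using the sharpened three-sphere inequality (Theorem \ref{new three sphere}), whose exponent approaches the optimal value as consecutive radii coalesce, so that the number of spheres --- and hence the geometric loss in the rate --- stays bounded while $T=\max\{C\rho_0,2t_1\}$ is kept fixed. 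Balancing the chain length, the radii and $\mu$ then produces the single-logarithm rate \eqref{estimate}, with $C_\star$ depending on $\lambda,\Lambda,E,M,F$ and on $H(T)/H(t_1)$. I expect the main obstacle to be precisely this last step: keeping the accumulation of constants in the three-sphere chain uniform with respect to the unknown geometry so that $T$ is independent of $\varepsilon$, together with the technical handling of the non-smooth set $G$ in the elliptic Cauchy estimate.
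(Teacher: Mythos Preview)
Your proposal is essentially correct and follows the paper's three-step programme: FBI transform plus elliptic Cauchy stability to propagate smallness of $w=u_1-u_2$ through $G$, then quantitative SUCP (Theorems \ref{5-115} and \ref{5-115Boundary}) combined with a lower bound on $u_j$ to convert this into a (weaker-than-log) bound on the distance, and finally the relative-graphs lemma plus the sharp three-sphere inequality to upgrade to the single-logarithm rate with $T$ fixed.

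One point where your sketch diverges from the paper and is slightly imprecise: the passage from ``$w$ small in $G$'' to ``$u_j$ small in $\Omega_j\setminus G$'' (which is what feeds into SUCP) is not carried out via your layer argument ``$u_1\approx0$ near $\Gamma^{(i)}_1$, hence $u_2=u_1-w$ small there''. Instead, the paper (Proposition \ref{prop-20r}) uses an energy identity: multiply the equation for $u_1$ by $\partial_t u_1$ and integrate over $\omega_r\approx\Omega_1\setminus G$; the resulting boundary terms live on $\Gamma_{1,r}$ (near $\partial\Omega_1$, where $u_1\approx0$ by regularity) and on $\Gamma_{2,r}$ (near $\partial\Omega_2$, where $u_2=0$, hence $u_1=w$ is small by the FBI/Cauchy estimate). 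This yields $\int_{\Omega_1\setminus G}u_1^2\le\eta^2$ directly. In Proposition \ref{stimaforte} one then picks $x_0\in\Gamma^{(i)}_1$ realizing $d_m$, notes $B_{d_m}(x_0)\cap\Omega_1\subset\Omega_1\setminus G$ (not in $G$, as your phrasing suggests), and applies Theorem \ref{5-115Boundary} to $u_1$ (not $u_2$) at $x_0$; the lower bound (Proposition \ref{stima-dal-basso}), itself obtained by a second FBI/three-sphere argument starting from the nontriviality of $\psi$, then forces $d_m$ small. Apart from this reorganisation of the middle step, your outline matches the paper.
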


We prove this Theorem in Section \ref{proofmain}.

\section{Preliminary results} \label{preliminary}

\subsection{Quantitative estimates of strong unique continuation.}\label{QEsucp}
Theorems presented in this subsection are crucial to prove Theorem \ref{MainTheo}. They are analogs of the quantitative estimates of strong unique continuation (doubling inequalities, three sphere inequality, three cylinders inequality, two-sphere one cylinder inequality at the interior and at the boundary) which are well known in the elliptic \cite{GaLi}, \cite{La}, \cite{AE} and in the parabolic context \cite{EsFeVe}, \cite{EsVe}. Theorem \ref{5-115} is basically the quantitative version of the strong unique continuation property for the self-adjoint hyperbolic equation proved by Lebeau in \cite{Le}. Theorems \ref{5-115} and \ref{5-115Boundary} have been proved in \cite{Ve3}.

Let $u\in\mathcal{W}\left([-\lambda\rho_0,\lambda\rho_0];B_{\rho_0}\right)$ be a weak solution to

\begin{equation} \label{4i-65}
\partial^2_{t}u-\mbox{div}\left(A(x)\nabla_x u\right)=0, \quad \hbox{in } B_{\rho_0}\times(-\lambda\rho_0,\lambda\rho_0).
\end{equation}
Let $r_0\in (0,\rho_0]$ and denote by

\begin{equation} \label{4ii-65}
\varepsilon_0:=\sup_{t\in (-\lambda\rho_0,\lambda\rho_0)}\left(\rho_0^{-n}\int_{B_{r_0}}u^2(x,t)dx\right)^{1/2}
\end{equation}
and

\begin{equation} \label{4iii-65}
H_0:=\left(\sum_{j=0}^2\rho_0^{j-n}\int_{B_{\rho_0}}\left\vert D_x^ju(x,0)\right\vert^2 dx\right)^{1/2}.
\end{equation}

\begin{theo}\label{5-115}
Let $A(x)$ be a real-valued symmetric $n\times n$ matrix satisfying \eqref{1-65} and let $u\in\mathcal{W}\left([-\lambda\rho_0,\lambda\rho_0];B_{\rho_0}\right)$ be a weak solution to \eqref{4i-65}. Then there exist constants $s_0\in (0,1)$ and $C\geq 1$ depending on $\lambda$ and $\Lambda$ only such that for every $r_0$ and $\rho$ satisfying $0<r_0\leq \rho\leq s_0 \rho_0$ the following inequality holds true

\begin{gather}
\label{SUCP}
\left\Vert u(\cdot,0) \right\Vert_{L^2\left(B_{\rho}\right)} \leq \frac{C\left(\rho_0\rho^{-1}\right)^{C}(H_0+e\varepsilon_0)}{\left(\theta\log \left( \frac{H_0+e\varepsilon_0}{\varepsilon_0}\right) \right)^{1/6}},
\end{gather}
where
\begin{equation}
\label{theta}
\theta=\frac{\log (\rho_0/C\rho)}{\log (\rho_0/r_0)}.
\end{equation}

\end{theo}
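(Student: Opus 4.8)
The plan is to follow the Lebeau--Robbiano scheme: turn the self-adjoint wave equation \eqref{4i-65} into a second order elliptic equation in $n+1$ variables by means of an FBI transform in the time variable, and then invoke the quantitative estimates of strong unique continuation for elliptic equations (three--sphere / two--sphere one--cylinder inequalities) with a sharply evaluated exponent, of the type recorded in \cite{A-R-R-V}. A preliminary reduction is convenient: replacing $u(x,t)$ by its even part $\tfrac12(u(x,t)+u(x,-t))$ changes neither $u(\cdot,0)$ nor its spatial derivatives at $t=0$, does not increase $\varepsilon_0$, keeps $u$ a weak solution of \eqref{4i-65}, and forces $\partial_t u(\cdot,0)=0$. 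So we may assume $u$ is even in $t$. With this normalisation an energy estimate for \eqref{4i-65} (localised by finite speed of propagation to a ball $B_{\rho_1}$, $\rho_1<\rho_0$, and a time interval $(-a_0,a_0)$, $a_0\sim\lambda\rho_0$) together with interior elliptic regularity in $x$ bounds $\sup_{|t|\le a_0}\sum_{\ell=0}^{2}\rho_0^{\ell}\|\partial_t^{\ell}u(\cdot,t)\|_{H^{2-\ell}(B_{\rho_1})}$ by $C\,\rho_0^{n/2}H_0$, the point being that $\partial_t u(\cdot,0)=0$ and $\partial_t^2u(\cdot,0)=\mathrm{div}(A\nabla u(\cdot,0))$ is already controlled by $H_0$.

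Next, for a large parameter $\mu>0$ set $V(x,y)=\sqrt{\mu/2\pi}\int_{-a_0}^{a_0}e^{-\frac{\mu}{2}(iy+t)^2}u(x,t)\,dt$ on $B_{\rho_1}\times(-Y_0,Y_0)$ with $Y_0<a_0$. Since $\partial_y$ applied to the kernel coincides with $-i\partial_t$, two integrations by parts in $t$ show that $V$ solves $\partial_y^2V+\mathrm{div}(A\nabla_xV)=g$, where $g$ consists of the boundary terms at $t=\pm a_0$; by the previous step and $Y_0<a_0$ one gets $\|g\|_{L^2(B_{\rho_1}\times(-Y_0,Y_0))}\le C\rho_0^{n/2}H_0\,e^{-c\mu\rho_0^{2}}$ with $c,C$ depending on $\lambda,\Lambda$, and subtracting a particular solution $V_g$ with $\|V_g\|\le C\|g\|$ reduces matters to $W:=V-V_g$, a solution of the homogeneous elliptic equation. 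Three elementary facts about $V$ then enter: (i) by Cauchy--Schwarz and $\int e^{-\frac{\mu}{2}t^2}dt=\sqrt{2\pi/\mu}$ one has $\|V(\cdot,0)\|_{L^2(B_{r_0})}\le\rho_0^{n/2}\varepsilon_0$, and, at the cost of a factor $e^{c\mu Y_0^{2}}$, smallness of $V$ on a full ball $\widetilde B_r(\widetilde z)\subset\mathbb{R}^{n+1}$ with $z$ near the origin and $r\sim r_0$; (ii) since $\partial_t u(\cdot,0)=0$, a second order Taylor expansion gives $\|V(\cdot,0)-u(\cdot,0)\|_{L^2(B_{\rho_1})}\le C\rho_0^{n/2}H_0(\mu^{-1}+e^{-c\mu\rho_0^{2}})$; (iii) $\|V\|_{L^2(B_{\rho_1}\times(-Y_0,Y_0))}\le C\rho_0^{n/2}H_0\,e^{c\mu Y_0^{2}}$. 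The same bounds hold for $W$ modulo the exponentially small correction $V_g$.

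Now apply the elliptic machinery to $W$: its coefficient matrix $\mathrm{diag}(A,1)$ is Lipschitz and uniformly elliptic, so the three--sphere inequality with sharp exponent applies, and chaining it along a string of balls from $\widetilde B_r(\widetilde z)$ out to a cylinder covering $B_\rho\times\{0\}$ — and then descending from the $(n+1)$-dimensional cylinder norm to the slice $\{y=0\}$ by the fundamental theorem of calculus together with a Caccioppoli inequality — yields $\|W(\cdot,0)\|_{L^2(B_\rho)}\le C(\rho_0/\rho)^{C}(\mathrm{small})^{\beta}(\mathrm{big})^{1-\beta}$, with ``small'' and ``big'' the right-hand sides of (i) and (iii) and with $\beta$ bounded below by a multiple of $\theta$ (this is exactly where the precise evaluation of the three--sphere exponent in terms of the radii is used). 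Combining with (ii), $\|u(\cdot,0)\|_{L^2(B_\rho)}$ is bounded, up to $C(\rho_0/\rho)^{C}$ and powers of $\rho_0$, by $\big(e^{c\mu}\varepsilon_0\big)^{\beta}\big(e^{c\mu}H_0\big)^{1-\beta}+H_0\mu^{-1}+H_0e^{-c\mu\rho_0^{2}}$. Choosing $\mu$ of the order of $\log\!\big(\tfrac{H_0+e\varepsilon_0}{\varepsilon_0}\big)$, so that the Hölder term matches the algebraic error $H_0\mu^{-1}$, and tracking how the factors $e^{c\mu}$ and the $\mu$-dependence propagate through the powers $\beta$, $1-\beta$ and through $\theta$, produces the asserted bound with the logarithmic exponent $1/6$.

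I expect the genuinely delicate step to be this last optimisation, together with the sharp accounting of $\theta$ in the three--sphere chaining: several competing rates — the Hölder gain $\varepsilon_0^{\beta}$, the exponential losses $e^{c\mu}$ coming from the complex shift in the FBI kernel, the algebraic error $\mu^{-1}$, and the degeneration of $\beta$ and of $\theta$ as the radii of the inner ball and the slab width are adjusted — must be balanced simultaneously, and it is precisely this balance that pins down the exponent $1/6$. The production of the exponentially small right-hand side $g$ and of the uniform energy bounds is routine but has to respect the localisation imposed by finite speed of propagation, since \eqref{4i-65} is known only in the bounded cylinder $B_{\rho_0}\times(-\lambda\rho_0,\lambda\rho_0)$; note that, in contrast with the application of this scheme later in the paper, here the spatial domain is the ball $B_{\rho_0}$, so no irregular-boundary difficulty arises.
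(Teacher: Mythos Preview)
The paper does not actually prove Theorem~\ref{5-115}: it is stated in Subsection~\ref{QEsucp} and attributed to \cite{Ve3}, with the remark that it is the quantitative version of Lebeau's strong unique continuation result \cite{Le}. So there is no proof in the paper to compare against directly.

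That said, your outline follows precisely the scheme the paper itself uses elsewhere (Propositions~\ref{prop-20r} and \ref{stima-dal-basso}) and attributes to the Robbiano--Lebeau method: FBI-transform the wave equation into an elliptic equation in $n+1$ variables with an exponentially small source term, then propagate smallness from the small ball $B_{r_0}$ outward by a three-sphere inequality with sharply evaluated exponent, and finally optimise over the FBI parameter $\mu$. The even-in-$t$ reduction to kill $\partial_t u(\cdot,0)$ and thereby control the final data purely by $H_0$ is a clean device, and your identification of $\theta$ with the elliptic three-sphere exponent $\log(\rho_0/C\rho)/\log(\rho_0/r_0)$ is exactly right.

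The one place where your sketch is genuinely thin is the derivation of the exponent $1/6$. You correctly flag it as the delicate step, but ``choosing $\mu$ of the order of $\log\big(\tfrac{H_0+e\varepsilon_0}{\varepsilon_0}\big)$ and tracking how the factors propagate'' hides a real computation: the exponential losses $e^{c\mu}$ enter both the small and the large factor in the three-sphere estimate, the H\"older exponent $\beta$ itself degrades with the geometry of the $(n{+}1)$-dimensional balls (whose $y$-extent must scale with $\mu^{-1/2}$ to keep $e^{c\mu y^2}$ under control), and the passage from a cylinder norm back to the slice $\{y=0\}$ costs a further power. It is the interaction of these three degradations that produces $1/6$ rather than, say, $1/2$; as written, your argument would not obviously distinguish between them. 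If you intend this as more than a sketch, that balance needs to be made explicit.
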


\bigskip

In order to state Theorem \ref{5-115Boundary} below let us introduce some notation.
Let $\varphi$ be a function belonging to $C^{1,1}\left(B^{\prime}_{\rho_0}\right)$ that satisfies

\begin{equation}
\label{phi_0}
\varphi(0)=\left\vert\nabla_{x'}\varphi(0)\right\vert=0
\end{equation}
and
\begin{equation}
\label{phi_M0}
\left\Vert\varphi\right\Vert_{C^{1,1}\left(B^{\prime}_{\rho_0}\right)}\leq E\rho_0,
\end{equation}
where

\begin{equation*}
\left\Vert\varphi\right\Vert_{C^{1,1}\left(B^{\prime}_{\rho_0}\right)}=\left\Vert\varphi\right\Vert_{L^{\infty}\left(B^{\prime}_{\rho_0}\right)}
+\rho_0\left\Vert\nabla_{x'}\varphi\right\Vert_{L^{\infty}\left(B^{\prime}_{\rho_0}\right)}+
\rho_0^2\left\Vert D_{x'}^2\varphi\right\Vert_{L^{\infty}\left(B^{\prime}_{\rho_0}\right)}.
\end{equation*}
For any $r\in (0,\rho_0]$ denote by
\[
K_{r}:=\{(x',x_n)\in B_{r}: x_n>\varphi(x')\}
\]
and
\[S_{\rho_0}:=\{(x',\varphi(x')): x' \in B^{\prime}_{\rho_0}\}.\]

Let $u\in \mathcal{W}\left([-\lambda\rho_0,\lambda\rho_0];K_{\rho_0}\right)$ be a solution to

\begin{equation} \label{4i-65Boundary}
\partial^2_{t}u-\mbox{div}\left(A(x)\nabla_x u\right)=0, \quad \hbox{in } K_{\rho_0}\times(-\lambda\rho_0,\lambda\rho_0),
\end{equation}
satisfying one of the following conditions
\begin{equation} \label{DirichBoundary}
u=0, \quad \hbox{on } S_{\rho_0}\times(-\lambda\rho_0,\lambda\rho_0),
\end{equation}

\begin{equation} \label{NeumBoundary}
A\nabla_x u\cdot \nu=0, \quad \hbox{on } S_{\rho_0}\times(-\lambda\rho_0,\lambda\rho_0),
\end{equation}
where $\nu$ denotes the outer unit normal to $S_{\rho_0}$.

Let $r_0\in (0,\rho_0]$ and denote by

\begin{equation} \label{4ii-65Boundary}
\varepsilon_0:=\sup_{t\in (-\lambda\rho_0,\lambda\rho_0)}\left(\rho_0^{-n}\int_{K_{r_0}}u^2(x,t)dx\right)^{1/2}
\end{equation}
and

\begin{equation} \label{4iii-65Boundary}
H_0:=\left(\sum_{j=0}^2\rho_0^{j-n}\int_{K_{\rho_0}}\left\vert D_x^ju(x,0)\right\vert^2 dx\right)^{1/2}.
\end{equation}

\begin{theo}\label{5-115Boundary}
Let \eqref{1-65} be satisfied. Let $u\in\mathcal{W}\left([-\lambda\rho_0,\lambda\rho_0];K_{\rho_0}\right)$ be a solution to \eqref{4i-65Boundary} satisfying \eqref{4ii-65Boundary} and \eqref{4iii-65Boundary}. Assume that $u$ satisfies either \eqref{DirichBoundary} or \eqref{NeumBoundary}. There exist constants $\overline{s}_0\in (0,1)$ and $C\geq 1$ depending on $\lambda$, $\Lambda$ and $E$ only such that for every $r_0$ and $\rho$ satisfying $0<r_0\leq \rho\leq \overline{s}_0 \rho_0$ the following inequality holds true

\begin{gather}
\label{SUCPBoundary}
\left\Vert u(\cdot,0) \right\Vert_{L^2\left(K_{\rho}\right)}\leq \frac{C\left(\rho_0\rho^{-1}\right)^{C}(H_0+e\varepsilon_0)}{\left(\widetilde{\theta}\log \left( \frac{H_0+e\varepsilon_0}{\varepsilon_0}\right)\right)^{1/6}} ,
\end{gather}
where
\begin{equation}
\label{theta}
\widetilde{\theta}=\frac{\log (\rho_0/C\rho)}{\log (\rho_0/r_0)}.
\end{equation}

\end{theo}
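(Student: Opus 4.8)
Since this estimate is established in \cite{Ve3}, I only describe the plan of proof. The scheme is that of the interior Theorem~\ref{5-115}, carried out so as to keep track of the boundary condition. Following Robbiano \cite{Ro_Hyp}, \cite{Ro_Cau} and Lebeau \cite{Le}, the first step is to convert \eqref{4i-65Boundary} into an elliptic equation by an FBI transform in the time variable. I would fix an even cutoff $\eta\in C^\infty_0(-\lambda\rho_0,\lambda\rho_0)$ with $\eta\equiv1$ on $(-\tfrac{\lambda\rho_0}{2},\tfrac{\lambda\rho_0}{2})$ and, for $\mu>0$, set
\[
U_\mu(x,y)=\sqrt{\tfrac{\mu}{2\pi}}\int_{-\lambda\rho_0}^{\lambda\rho_0}e^{-\frac{\mu}{2}(iy-t)^2}\eta(t)\,u(x,t)\,dt,\qquad(x,y)\in K_{\rho_0}\times\R .
\]
Using that $e^{-\frac{\mu}{2}(iy-t)^2}$ satisfies $\partial_y^2=-\partial_t^2$, integrating by parts twice in $t$, and invoking $\partial_t^2u=\mathrm{div}(A\nabla_xu)$, one checks that $U_\mu$ solves $\mathrm{div}_x(A\nabla_xU_\mu)+\partial_y^2U_\mu=F_\mu$ in $K_{\rho_0}\times\R$, where $F_\mu$ is supported where $\eta'\neq0$ (hence in $|t|>\tfrac{\lambda\rho_0}{2}$) and, by the elementary FBI estimates of Subsection~\ref{FBI}, $\|F_\mu(\cdot,y)\|_{L^2(K_{\rho_0})}\le C\mu^{1/2}e^{-c\mu\rho_0^2+\frac{\mu}{2}y^2}H_0$. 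Crucially, the homogeneous boundary condition is preserved: \eqref{DirichBoundary} gives $U_\mu=0$ on $S_{\rho_0}\times\R$, and \eqref{NeumBoundary} gives that the conormal derivative of $U_\mu$ relative to the $(n+1)$–dimensional operator $\mathrm{div}_x(A\nabla_x\cdot)+\partial_y^2$ vanishes on $S_{\rho_0}\times\R$, since $y$ is tangential there and $\partial_y^2$ contributes nothing to the flux across $S_{\rho_0}\times\R$.

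Next, the same FBI estimates, together with the local energy estimate for \eqref{4i-65Boundary} — which, in the class $\mathcal W$ and via finite propagation speed (this is exactly why the time window has length $\lambda\rho_0$), bounds the energy of $u$ on the sub-cones that occur by $CH_0$ — would give, for $|y|\le c_0\rho_0$ with $c_0=c_0(\lambda)$ small: first $\|U_\mu(\cdot,0)-u(\cdot,0)\|_{L^2(K_\rho)}\le C\mu^{-1}H_0$, the term linear in $t$ in the Taylor expansion of $u(x,\cdot)$ at $0$ being annihilated by the parity of $\eta$ and $\partial_t^2u=\mathrm{div}(A\nabla_xu)$ being used to bound the quadratic remainder by $H_0$; second $\|U_\mu\|_{L^2(K_{r_0}\times(-c_0\rho_0,c_0\rho_0))}\le C(\varepsilon_0+e^{-c\mu\rho_0^2}H_0)$; third $\|U_\mu\|_{L^2(K_{\rho_0}\times(-c_0\rho_0,c_0\rho_0))}\le Ce^{c\mu\rho_0^2}H_0$; all constants here depending on $\lambda,\Lambda,E$ only. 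The choice of $c_0$ is dictated by the requirement that $e^{\frac{\mu}{2}y^2}$ not overwhelm the gain $e^{-c\mu\rho_0^2}$ in the cut–off tail.

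At this point $U=U_\mu$ solves an elliptic equation with exponentially small right-hand side on the $(n+1)$–dimensional cylinder $K_{\rho_0}\times(-c_0\rho_0,c_0\rho_0)$, whose lateral boundary contains the $C^{1,1}$ hypersurface $S_{\rho_0}\times(-c_0\rho_0,c_0\rho_0)$, on which $U$ obeys a homogeneous Dirichlet or conormal condition. I would then apply a quantitative strong unique continuation estimate up to the boundary for such elliptic equations — a three–region / propagation–of–smallness inequality at the boundary, valid for Lipschitz leading coefficients and a $C^{1,1}$ boundary, in the spirit of \cite{AE} and of the elliptic Cauchy estimates of \cite{A-R-R-V} — to propagate the second bound above from near $K_{r_0}$ to the slice $K_\rho\times\{0\}$, obtaining
\[
\|U(\cdot,0)\|_{L^2(K_\rho)}\le C\Big(\tfrac{\rho_0}{\rho}\Big)^{C}\big(e^{c\mu\rho_0^2}H_0\big)^{1-\widetilde\theta}\big(\varepsilon_0+e^{-c\mu\rho_0^2}H_0\big)^{\widetilde\theta},
\]
with $\widetilde\theta$ as in \eqref{SUCPBoundary} (no degeneration of the exponent occurs because the $y$–slab has width comparable to $\rho_0$). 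Combining with the first bound of the previous paragraph and optimizing in $\mu$ — balancing the growing prefactor $e^{c\mu\rho_0^2}$ against the gains $e^{-c\mu\rho_0^2}H_0$ and the Hölder factor $(\varepsilon_0/H_0)^{\widetilde\theta}$, while keeping $\mu^{-1}H_0$ under control — one arrives at \eqref{SUCPBoundary}; the power $1/6$ is precisely the one that this (somewhat delicate) balancing produces, exactly as in Lebeau's treatment of the self–adjoint wave equation.

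I expect the main obstacle to be the boundary elliptic estimate itself: one needs genuine quantitative strong unique continuation up to a $C^{1,1}$ boundary for an operator with only Lipschitz leading coefficients, and this cannot be bypassed by flattening $S_{\rho_0}$ and reflecting $u$ so as to reduce to the interior Theorem~\ref{5-115}, because straightening the conormal field $A\nu$ on the flattened boundary — which is what is needed for the reflected coefficients to be even continuous across it — already lowers the coefficient regularity below Lipschitz. The secondary difficulties are the $\mu$–optimization that produces the sharp rate $1/6$, the quantitative control of the energy of $u$ on the relevant sub-cones in terms of $H_0$ (for which finite propagation speed and the precise time window $(-\lambda\rho_0,\lambda\rho_0)$ are essential), and the bookkeeping that keeps every constant dependent on $\lambda,\Lambda,E$ only throughout, including the final optimization.
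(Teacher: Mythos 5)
The paper does not prove Theorem \ref{5-115Boundary} at all: as stated at the beginning of Subsection \ref{QEsucp}, both Theorem \ref{5-115} and Theorem \ref{5-115Boundary} are quoted from \cite{Ve3}, so there is no internal proof to compare yours with. Your outline — FBI transform in the time variable, reduction to an elliptic equation in $K_{\rho_0}\times\mathbb{R}$ with exponentially small right-hand side, inheritance of the homogeneous Dirichlet/conormal condition on $S_{\rho_0}\times\mathbb{R}$, propagation of smallness up to the boundary for the elliptic problem, and a final optimization in $\mu$ — is indeed the Robbiano--Lebeau route on which \cite{Ve3} (and Section \ref{proofmain} of this paper, in its own arguments) is built, so as a reconstruction of the strategy it is on target.

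As a proof, however, it has a genuine gap: every step that carries the actual content of \eqref{SUCPBoundary} is asserted rather than established. The ``quantitative strong unique continuation up to the boundary'' for elliptic operators with merely Lipschitz leading coefficients, homogeneous Dirichlet or conormal data on a $C^{1,1}$ portion, constants depending on $\lambda,\Lambda,E$ only, and the precise two-sphere one-cylinder structure producing $\widetilde{\theta}=\log(\rho_0/C\rho)/\log(\rho_0/r_0)$, is not available off the shelf in \cite{A-R-R-V} or \cite{AE} in the single-shot form you invoke; constructing the boundary Carleman/three-region machinery and running the iteration that yields this exponent is exactly the work done in \cite{Ve3}, so your plan assumes its main ingredient. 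The exponent $1/6$ is likewise attributed to a ``balancing'' in $\mu$ that is never exhibited. Some intermediate claims are also stronger than what the elementary estimates justify: Proposition \ref{pag56-62}, estimate \eqref{2-59}, gives a rate $\mu^{-1/2}$, and your claimed $\mu^{-1}$ rate for $U_\mu(\cdot,0)-u(\cdot,0)$ would require a quantitative bound on $\sup_t\Vert\partial_t^2u(\cdot,t)\Vert$ in terms of $H_0$, which does not follow from membership in $\mathcal{W}$ and is itself a (local energy, finite-speed) estimate you only gesture at; similarly the bound of $\Vert U_\mu\Vert$ on all of $K_{\rho_0}\times(-c_0\rho_0,c_0\rho_0)$ by $Ce^{c\mu\rho_0^2}H_0$ can only hold on a suitably shrunk half-ball. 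Finally, your closing assertion that a flattening-plus-reflection reduction to Theorem \ref{5-115} is impossible is speculation about \cite{Ve3}'s internals rather than an argument. In short: correct road map, but the decisive boundary elliptic estimate, the energy bookkeeping, and the $\mu$-optimization are missing, so within this paper the honest course is what the author does, namely to cite \cite{Ve3}.
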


\subsection{A regularity result for hyperbolic equation}\label{Regularity-enunciato}
The next Theorem is a mere simplified version of a regularity result proved in \cite{Co}. For the reader convenience we give a sketch of the proof of such a result in the Appendix, Subsection \ref{Regularity-proof}.

\begin{theo}\label{boundary-Colombini}
Let $\Omega$ be a bounded domain of $\mathbb{R}^n$ that satisfies \eqref{1-138}. Let $A(x)$ be a real-valued symmetric $n\times n$ matrix satisfying \eqref{1-65}. Let $m:=\left[\frac{n+2}{4}\right]$. Assume that $\psi$ is a function on $\partial\Omega\times [0,T]$ which satisfies the condition \eqref{psi}.

Let $u\in\mathcal{W}\left([0,T];\Omega\right)$ be the solution to the problem
\begin{equation}
\label{0-s7}
\left\{\begin{array}{ll}
\partial^2_{t}u-\mbox{div}\left(A(x)\nabla_x u\right)=0, \quad \hbox{in } \Omega\times [0,T],\\[2mm]
u=\psi \quad \hbox{, on } \partial\Omega\times [0,T],\\[2mm]
u(\cdot,0)=\partial_tu(\cdot,0)=0 \quad \hbox{, in } \Omega.
\end{array}\right.
\end{equation}
Then for every $\alpha\in(0,1)$ and $t\in[0,T]$ we have $\partial^2_tu(\cdot,t)\in L^{\infty}(\Omega)$, $u(\cdot,t)\in C^{1,\alpha}(\Omega)$ and the following inequalities hold true

\begin{subequations}
\label{1-s15}
\begin{equation}
\label{1-s15a}
\sup_{t\in[0,T]}\left\Vert \partial^2_tu(\cdot,t)\right\Vert_{L^{\infty}(\Omega)}\leq C\rho_0^{-2}(T\rho_0^{-1}+1)H(T),
\end{equation}
\begin{equation}
\label{1-s15a'}
\sup_{t\in[0,T]}\left\Vert u(\cdot,t)\right\Vert_{H^{2}(\Omega)}\leq C(T\rho_0^{-1}+1)H(T),
\end{equation}
\begin{equation}
\label{1-s15b}
\sup_{t\in[0,T]}\left\Vert u(\cdot,t)\right\Vert_{C^{1,\alpha}(\Omega)}\leq C(T\rho_0^{-1}+1)H(T),
 \end{equation}
\end{subequations}
where $H(T)$ is defined by \eqref{H-t} and $C$ depends on $\alpha, n, E, M,\lambda$ and $\Lambda$ only.
\end{theo}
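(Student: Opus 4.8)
\textbf{Proof proposal for Theorem \ref{boundary-Colombini}.}

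The plan is to reduce the inhomogeneous boundary value problem \eqref{0-s7} to a problem with homogeneous boundary data and a source term, then run an energy estimate in which we differentiate the equation repeatedly in time, and finally convert the $H^2$-in-space control of sufficiently many time derivatives into the pointwise bounds \eqref{1-s15a}, \eqref{1-s15b} via Sobolev embedding adapted to the geometry. First I would pick a standard extension $\Psi(\cdot,t)$ of $\psi(\cdot,t)$ into $\Omega$ with $\|\Psi(\cdot,t)\|_{C^{1,1}(\Omega)}\lesssim \|\psi(\cdot,t)\|_{C^{1,1}(\partial\Omega)}$, respecting the normalized norms and controlled in terms of $\rho_0,E,M$; the time derivatives of $\Psi$ extend the time derivatives of $\psi$, so all $2m+4$ of them are controlled by $H(T)$. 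Setting $v=u-\Psi$ we get $\partial_t^2 v-\operatorname{div}(A\nabla v)=\operatorname{div}(A\nabla\Psi)-\partial_t^2\Psi=:g$ in $\Omega\times[0,T]$, with $v=0$ on $\partial\Omega\times[0,T]$ and $v(\cdot,0)=\partial_tv(\cdot,0)=0$ since, by \eqref{2-s13b}, all relevant time derivatives of $\psi$ (hence of $\Psi$) vanish at $t=0$.

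Next I would carry out the hierarchy of energy estimates. Because the coefficients $A$ are time-independent, $\partial_t^k v$ solves the same equation with source $\partial_t^k g$ and homogeneous Dirichlet data, and its Cauchy data at $t=0$ vanish (again using that enough time derivatives of $\psi$ vanish at $0$, together with the compatibility that follows from $v(\cdot,0)=\partial_tv(\cdot,0)=0$ and the equation, which propagates vanishing to all the $\partial_t^k v(\cdot,0)$ up to order $2m+3$). The standard energy identity then gives, for each $k\le 2m+2$,
\begin{equation*}
\sup_{t\in[0,T]}\Bigl(\|\partial_t^{k+1}v(\cdot,t)\|_{L^2(\Omega)}+\|\nabla\partial_t^{k}v(\cdot,t)\|_{L^2(\Omega)}\Bigr)\leq C\int_0^T\|\partial_t^{k}g(\cdot,s)\|_{L^2(\Omega)}\,ds\leq C(T\rho_0^{-1}+1)H(T),
\end{equation*}
where the bound on the time integral of $\|\partial_t^k g\|_{L^2}$ comes from the $C^{1,1}$-control of the $\Psi$-derivatives and the measure bound $|\Omega|\le M\rho_0^n$, and the factor $(T\rho_0^{-1}+1)$ absorbs the length of the interval (with a Grönwall step to handle the first-order terms produced by $\operatorname{div}(A\nabla\cdot)$ when $A$ is merely Lipschitz). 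This controls $\partial_t^{j}v$ in $C^0([0,T];H^1(\Omega))$ for $j\le 2m+3$. Then, for fixed $t$, I would view the elliptic equation $-\operatorname{div}(A\nabla\,\partial_t^{k}v)=\partial_t^{k}g-\partial_t^{k+2}v$ and invoke $C^{1,1}$ global elliptic regularity on the $C^{1,1}$ domain $\Omega$ to gain two spatial derivatives: $\|\partial_t^{k}v(\cdot,t)\|_{H^2(\Omega)}\le C(\|\partial_t^{k}g(\cdot,t)\|_{L^2}+\|\partial_t^{k+2}v(\cdot,t)\|_{L^2})$ for $k\le 2m+1$, which together with the extension bound on $\Psi$ yields \eqref{1-s15a'}.

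Finally I would upgrade to $L^\infty$ in time-second-derivative and $C^{1,\alpha}$ in space. The key is that having $H^2$-control of $\partial_t^{2j}v$ for $j$ up to $m$ translates, by elliptic iteration as above (each pair of time derivatives removed buys two space derivatives through the equation), into $H^{2m+2}$-type control of $v(\cdot,t)$ in $x$ on interior pieces and, using the $C^{1,1}$ boundary charts, into $C^{1,\alpha}(\Omega)$ control once $2m+2>\frac n2+1$, i.e. $m\ge\frac n4$, which is exactly why $m=\bigl[\frac{n+2}{4}\bigr]$ is chosen; the Sobolev embedding $H^{2m+2}(\Omega)\hookrightarrow C^{1,\alpha}(\Omega)$ (valid for every $\alpha\in(0,1)$ when $2m+2\ge\frac n2+1+\alpha$, with the borderline cases absorbed since we always have $2m+4$ derivatives of $\psi$ available, one pair to spare) gives \eqref{1-s15b}; likewise $\partial_t^2 v(\cdot,t)\in H^{2m}\hookrightarrow L^\infty$ gives \eqref{1-s15a} after adding back $\partial_t^2\Psi$. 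I expect the main obstacle to be bookkeeping the boundary regularity: making the elliptic gain-two-derivatives step legitimate on a merely $C^{1,1}$ domain requires flattening the boundary via the charts of Definition \ref{def:2.1}, checking that the flattened coefficients stay Lipschitz, that homogeneous Dirichlet data is preserved, and that the constants depend only on $E,M,\lambda,\Lambda,n$ (and $\alpha$); the interior part and the energy hierarchy are routine, but threading the normalized-norm conventions through all of this, and verifying the compatibility/vanishing of the Cauchy data of $\partial_t^k v$ at $t=0$ up to the needed order, is where the care goes. Since the paper states this is "a mere simplified version" of \cite{Co}, I would in the Appendix only sketch the energy hierarchy and the embedding count, citing \cite{Co} for the full boundary-regularity machinery.
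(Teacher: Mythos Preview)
Your reduction to homogeneous boundary data via an extension $\Psi$ and the time-differentiation hierarchy are exactly what the paper does, and your $H^2$ bound \eqref{1-s15a'} is fine. The gap is in the last paragraph, where you claim that ``each pair of time derivatives removed buys two space derivatives through the equation'' and hence that $v(\cdot,t)\in H^{2m+2}(\Omega)$, which you then embed into $C^{1,\alpha}$. This bootstrapping step fails under the stated hypotheses: the domain is only $C^{1,1}$ and the matrix $A$ is only Lipschitz (conditions \eqref{1-138b} and \eqref{2-65}), so global elliptic regularity gives $W^{2,p}$ and no more. To pass from $-\operatorname{div}(A\nabla v)=g-\partial_t^2 v\in H^2$ to $v\in H^4$ you would need $A\in C^{2,1}$ and $\partial\Omega\in C^{3,1}$, and to reach $H^{2m+2}$ you would need correspondingly higher regularity that is simply not assumed. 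So the embedding $H^{2m+2}\hookrightarrow C^{1,\alpha}$ is unavailable, and neither of \eqref{1-s15a}, \eqref{1-s15b} follows from your argument.

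The paper's proof (Theorem \ref{Colombini} in the Appendix) circumvents this by bootstrapping in $p$ rather than in the order of differentiability. One starts from the energy estimate for the highest time derivative $u^{(2m+2)}=\partial_t^{2m+2}v$, which combined with Sobolev embedding places $u^{(2m+2)}(\cdot,t)$ in $L^{p_0}$ with $\tfrac{1}{p_0}=\tfrac12-\tfrac1n$. Then the elliptic equation $\operatorname{div}(A\nabla u^{(2m)})=u^{(2m+2)}-\partial_t^{2m}F$ and $W^{2,p_0}$ regularity (Gilbarg--Trudinger, valid on $C^{1,1}$ domains with Lipschitz $A$ for every finite $p$) put $u^{(2m)}\in W^{2,p_0}\hookrightarrow L^{p_1}$ with $\tfrac{1}{p_1}=\tfrac{1}{p_0}-\tfrac{2}{n}$. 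Iterating $m$ times climbs the $L^p$ ladder until $u^{(2)}=\partial_t^2 v\in L^\infty$, which is \eqref{1-s15a}; one more application of $C^{1,\alpha}$ elliptic regularity with right-hand side in $L^\infty$ then yields \eqref{1-s15b}. The choice $m=\left[\tfrac{n+2}{4}\right]$ is exactly the number of steps needed for $\tfrac12-\tfrac{2(m+1)}{n}<0$. Every step uses only two spatial derivatives, so only $C^{1,1}$ boundary and Lipschitz $A$ are required.
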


\subsection{Elementary estimates for the FBI transform} \label{FBI}
For the convenience of the the reader, we collect in this section some well known elementary properties of the FBI transform see also \cite{CheDY}, \cite{ChePY}, \cite{Ro_Hyp}, \cite{Ro_Cau}, \cite{RoZu}.
Let $\Omega$ be a domain of $\mathbb{R}^n$ and $T$ a positive number. Let $u\in\mathcal{W}\left([0,T];\Omega\right)$ satisfy
\begin{equation}
\label{1-56}
\left\{\begin{array}{ll}
\partial^2_{t}u-\mbox{div}\left(A(x)\nabla_x u\right)=0, \quad \hbox{in } \Omega\times [0,T],\\[2mm]
u(\cdot,0)=0, \quad \hbox{in } \Omega,\\[2mm]
\partial_tu(\cdot,0)=0, \quad \hbox{in } \Omega.
\end{array}\right.
\end{equation}
Let $\mu$ be a positive number. For a fixed $\tau\in(0,T/2]$ we denote by $U^{(\tau)}_{\mu}$ the FBI transform of $u$ defined by

\begin{equation}
\label{defFBI}
U^{(\tau)}_{\mu}(x,y):=\sqrt{\frac{\mu}{2\pi}}\int^T_0e^{-\frac{\mu}{2}(iy+\tau-t)^2}u(x,t)dt, \quad \hbox{for every } (x,y)\in\Omega\times\mathbb{R}.
\end{equation}
Observe that $U^{(\tau)}_{\mu}$ as a function of $y$ is a $C^{\infty}(\mathbb{R})$ with values in $H^2(\Omega)$.

\bigskip

The following propositions holds true.

\bigskip

\begin{prop}\label{pag56-62}
We have
\begin{gather} \label{1-60}
\left\vert D^j_{x}U^{(\tau)}_{\mu}(x,y)\right\vert\leq\\ \nonumber \leq c\mu^{1/4}e^{\frac{\mu}{2}y^2}\left(\int^T_0\left\vert D_x^j u(x,t)\right\vert^2 dt\right)^{1/2}, \quad \hbox{for a.e. } x\in\Omega, \quad \hbox{and } 0\leq j\leq2,
\end{gather}
and
\begin{gather} \label{2-59}
\left\vert U^{(\tau)}_{\mu}(x,0)-u(x,\tau)\right\vert\leq c\mu^{-1/2}\left\Vert\partial_t u(x,\cdot)\right\Vert_{L^{\infty}[0,T]}
\end{gather}
where $c$ is an absolute constant.
\end{prop}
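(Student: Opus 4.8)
The plan is to prove Proposition \ref{pag56-62} by direct estimation of the integral defining the FBI transform, exploiting the Gaussian kernel's decay together with the vanishing of $u$ and $\partial_t u$ at $t=0$.

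First I would establish \eqref{1-60}. Fix $x$ and $j$, and differentiate under the integral sign in $x$ (legitimate since $U^{(\tau)}_\mu(\cdot,y)\in H^2(\Omega)$, with derivatives represented by the corresponding integrals). The key observation is that
\[
\bigl|e^{-\frac{\mu}{2}(iy+\tau-t)^2}\bigr| = e^{\frac{\mu}{2}y^2}\,e^{-\frac{\mu}{2}(\tau-t)^2},
\]
since $(iy+\tau-t)^2 = (\tau-t)^2 - y^2 + 2iy(\tau-t)$, so the real part of $-\frac{\mu}{2}(iy+\tau-t)^2$ equals $\frac{\mu}{2}y^2 - \frac{\mu}{2}(\tau-t)^2$. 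Therefore
\[
\bigl|D_x^j U^{(\tau)}_\mu(x,y)\bigr| \leq \sqrt{\frac{\mu}{2\pi}}\,e^{\frac{\mu}{2}y^2}\int_0^T e^{-\frac{\mu}{2}(\tau-t)^2}\bigl|D_x^j u(x,t)\bigr|\,dt.
\]
Then apply Cauchy--Schwarz to split off $\bigl(\int_0^T |D_x^j u(x,t)|^2 dt\bigr)^{1/2}$, leaving the factor $\sqrt{\mu/2\pi}\,\bigl(\int_0^T e^{-\mu(\tau-t)^2}dt\bigr)^{1/2} \leq \sqrt{\mu/2\pi}\,\bigl(\int_{\mathbb{R}} e^{-\mu s^2}ds\bigr)^{1/2} = \sqrt{\mu/2\pi}\cdot(\pi/\mu)^{1/4} = c\,\mu^{1/4}$ for an absolute constant $c$. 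This gives \eqref{1-60}.

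Next, for \eqref{2-59}, the idea is the standard approximate-identity argument: $\sqrt{\mu/2\pi}\,e^{-\frac{\mu}{2}(\tau-t)^2}$ integrates (over $\mathbb{R}$) to roughly $1$, so $U^{(\tau)}_\mu(x,0)$ should be close to $u(x,\tau)$. Precisely, set $y=0$ and write
\[
U^{(\tau)}_\mu(x,0) - u(x,\tau) = \sqrt{\frac{\mu}{2\pi}}\int_0^T e^{-\frac{\mu}{2}(\tau-t)^2}\bigl(u(x,t)-u(x,\tau)\bigr)\,dt + u(x,\tau)\left(\sqrt{\frac{\mu}{2\pi}}\int_0^T e^{-\frac{\mu}{2}(\tau-t)^2}dt - 1\right).
\]
For the first term use $|u(x,t)-u(x,\tau)| \leq |t-\tau|\,\|\partial_t u(x,\cdot)\|_{L^\infty[0,T]}$ and bound $\sqrt{\mu/2\pi}\int_{\mathbb{R}}|s|e^{-\frac{\mu}{2}s^2}ds = c\,\mu^{-1/2}$. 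For the tail term, since $\tau\in(0,T/2]$ one has $\tau\geq 0$ and $T-\tau\geq T/2\geq\tau$, so the missing mass satisfies $\bigl|\sqrt{\mu/2\pi}\int_0^T e^{-\frac{\mu}{2}(\tau-t)^2}dt - 1\bigr| = \sqrt{\mu/2\pi}\bigl(\int_{-\infty}^{-\tau} + \int_{T-\tau}^{\infty}\bigr)e^{-\frac{\mu}{2}s^2}ds \leq 2\sqrt{\mu/2\pi}\int_\tau^\infty e^{-\frac{\mu}{2}s^2}ds$; and here I would use the vanishing condition $u(x,0)=\partial_t u(x,0)=0$ to replace $|u(x,\tau)| = |u(x,\tau)-u(x,0)| \leq \tau\,\|\partial_t u(x,\cdot)\|_{L^\infty[0,T]}$, so that this term is bounded by $\tau\,\|\partial_t u(x,\cdot)\|_{L^\infty}\cdot 2\sqrt{\mu/2\pi}\int_\tau^\infty e^{-\frac{\mu}{2}s^2}ds$. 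A crude Gaussian tail estimate $\int_\tau^\infty e^{-\frac{\mu}{2}s^2}ds \leq \frac{1}{\mu\tau}e^{-\frac{\mu}{2}\tau^2}$ combined with $\tau\cdot\frac{1}{\tau}\cdot\mu^{1/2}\cdot\mu^{-1} = \mu^{-1/2}$ (and $e^{-\frac{\mu}{2}\tau^2}\leq 1$) shows this term is also $O(\mu^{-1/2}\|\partial_t u(x,\cdot)\|_{L^\infty})$. Adding up yields \eqref{2-59}.

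I do not expect any serious obstacle here; this proposition is a collection of elementary facts about the Gaussian kernel. The only mildly delicate point is tracking the tail contribution in \eqref{2-59} and making sure the constraint $\tau\leq T/2$ is used (so that both tails, at $s=-\tau$ and $s=T-\tau$, are controlled by $e^{-\frac{\mu}{2}\tau^2}$) — but even this is routine. One should also be slightly careful that all bounds hold for a.e. $x$, which is immediate since the estimates are pointwise in $x$ and the integrand is measurable. I would present \eqref{1-60} and \eqref{2-59} as two short lemmas-within-a-proof, each a few lines.
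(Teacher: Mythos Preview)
Your proposal is correct and follows essentially the same route as the paper: the same modulus identity and Cauchy--Schwarz for \eqref{1-60}, and the same decomposition into an approximation term plus Gaussian tails for \eqref{2-59}, including the use of $u(x,0)=0$ to replace $|u(x,\tau)|$ by $\tau\|\partial_t u(x,\cdot)\|_{L^\infty}$. The only cosmetic difference is in the tail bound: the paper factors $e^{-\eta^2/2}=e^{-\eta^2/4}e^{-\eta^2/4}$ and then maximizes $\tau e^{-\mu\tau^2/4}$, whereas you use the standard estimate $\int_\tau^\infty e^{-\mu s^2/2}\,ds\leq(\mu\tau)^{-1}e^{-\mu\tau^2/2}$ so that the $\tau$'s cancel directly---both are routine and yield the same $c\,\mu^{-1/2}$.
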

\begin{proof}
See Subsection \ref{proofFBI}.
\end{proof}

\bigskip

\begin{prop}\label{iperb-elliptic}
Let $u\in\mathcal{W}\left([0,T];\Omega\right)$ satisfy \eqref{1-56} and let $U^{(\tau)}_{\mu}$ be defined by \eqref{defFBI}. Then $U_{\mu}$ satisfies the equation
\begin{equation} \label{ellip-63}
\partial^2_{y}U^{(\tau)}_{\mu}+\mbox{div}\left(A(x)\nabla_x U^{(\tau)}_{\mu}\right)=f^{(\tau)}_{\mu}(x,y), \quad \hbox{in } \Omega\times\mathbb{R},
\end{equation}
where
\begin{equation} \label{1-63}
f_{\mu}(x,y)=\sqrt{\frac{\mu}{2\pi}}e^{-\frac{\mu}{2}(iy+\tau-T)^2}\left(\partial_tu(x,T)-\mu(iy+\tau-T)u(x,T)\right).
\end{equation}
\end{prop}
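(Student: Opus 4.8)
The plan is to verify the claimed elliptic equation by a direct computation, integrating by parts twice in the time variable inside the defining integral \eqref{defFBI} and using the wave equation \eqref{1-56} together with the homogeneous initial conditions. Write $K(y,\tau,t)=\sqrt{\mu/2\pi}\,e^{-\frac{\mu}{2}(iy+\tau-t)^2}$, so that $U^{(\tau)}_{\mu}(x,y)=\int_0^T K(y,\tau,t)\,u(x,t)\,dt$. Since $u(\cdot,t)\in H^2(\Omega)$ with the regularity recorded after \eqref{defFBI}, differentiation under the integral sign in both $x$ and $y$ is justified, and the spatial operator $\divrg(A(x)\nabla_x\,\cdot\,)$ commutes with $\int_0^T K\,dt$; hence
\[
\divrg\left(A(x)\nabla_x U^{(\tau)}_{\mu}\right)=\int_0^T K(y,\tau,t)\,\divrg\left(A(x)\nabla_x u(x,t)\right)dt=\int_0^T K(y,\tau,t)\,\partial_t^2 u(x,t)\,dt,
\]
where the last equality uses \eqref{1-56}.

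Next I would compute $\partial_y^2 U^{(\tau)}_{\mu}$. The key algebraic observation is that $K$ satisfies $\partial_y^2 K=-\partial_t^2 K$ (both equal $\mu K\big(\mu(iy+\tau-t)^2-1\big)$ up to the sign bookkeeping coming from the factor $i$ in $iy$), so that $\partial_y^2 U^{(\tau)}_{\mu}(x,y)=-\int_0^T \partial_t^2 K(y,\tau,t)\,u(x,t)\,dt$. Integrating by parts twice in $t$ moves the two $t$-derivatives off $K$ and onto $u$, at the cost of boundary terms at $t=0$ and $t=T$:
\[
-\int_0^T \partial_t^2 K\,u\,dt=-\int_0^T K\,\partial_t^2 u\,dt+\Big[\partial_t K(y,\tau,t)\,u(x,t)-K(y,\tau,t)\,\partial_t u(x,t)\Big]_{t=0}^{t=T}.
\]
By the initial conditions in \eqref{1-56}, $u(x,0)=\partial_t u(x,0)=0$, so the contribution at $t=0$ vanishes and only the $t=T$ boundary term survives. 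Adding the two displayed identities, the $\int_0^T K\,\partial_t^2 u\,dt$ terms cancel and one is left with
\[
\partial_y^2 U^{(\tau)}_{\mu}+\divrg\left(A(x)\nabla_x U^{(\tau)}_{\mu}\right)=\partial_t K(y,\tau,T)\,u(x,T)-K(y,\tau,T)\,\partial_t u(x,T).
\]
Since $\partial_t K(y,\tau,t)=\sqrt{\mu/2\pi}\,\mu(iy+\tau-t)\,e^{-\frac{\mu}{2}(iy+\tau-t)^2}$, evaluating at $t=T$ gives exactly $-f^{(\tau)}_{\mu}(x,y)$ with $f^{(\tau)}_{\mu}$ as in \eqref{1-63}; moving it to the right-hand side yields \eqref{ellip-63}.

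The computation itself is elementary, so the only real point requiring care — and the step I expect to be the main (mild) obstacle — is the justification of differentiating under the integral sign and of the two integrations by parts in a setting where $u$ is only a weak solution with $\partial_t^2 u(\cdot,t)\in L^2(\Omega)$ and $u(\cdot,t)\in H^2(\Omega)$: one must argue that the identity $\divrg(A\nabla_x u)=\partial_t^2 u$ holds as an identity in $L^2(\Omega)$ for a.e. $t$, that $t\mapsto u(x,t)$ is regular enough for the boundary terms in the integration by parts to make pointwise (or $H^2(\Omega)$-valued) sense, and that Fubini/differentiation-under-the-integral applies with the Gaussian kernel $K$, whose decay in $t$ is harmless on the finite interval $[0,T]$. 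All of this follows from the definition of $\mathcal{W}([0,T];\Omega)$ and a standard density/approximation argument (approximating $\psi$, hence $u$, by smoother data), so the proposition holds as stated.
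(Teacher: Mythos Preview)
Your approach is correct and essentially identical to the paper's: both compute $\partial_y^2 U^{(\tau)}_\mu$ by exploiting the relation $\partial_y K=-i\partial_t K$ (equivalently $\partial_y^2 K=-\partial_t^2 K$), integrate by parts in $t$, use the initial conditions to kill the $t=0$ boundary terms, and cancel the $\int_0^T K\,\partial_t^2 u\,dt$ against $\divrg(A\nabla_x U^{(\tau)}_\mu)$ via the wave equation. The only blemish is a sign slip in your integration-by-parts display: the correct identity is
\[
-\int_0^T \partial_t^2 K\,u\,dt=-\int_0^T K\,\partial_t^2 u\,dt-\Big[\partial_t K\,u-K\,\partial_t u\Big]_{0}^{T},
\]
so the boundary contribution at $t=T$ is $-\partial_t K(y,\tau,T)\,u(x,T)+K(y,\tau,T)\,\partial_t u(x,T)$, which equals $f^{(\tau)}_\mu(x,y)$ directly (no further ``moving to the right-hand side'' is needed).
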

\begin{proof}
See Subsection \ref{proofFBI}.
\end{proof}

\bigskip

\subsection{A sharp three sphere inequality for elliptic equations} \label{threeSphere}

In the following theorem we give a three sphere inequality for elliptic equations in which we take care to evaluate the exponent of the inequality when the radii of the three balls are close to each other. Except for this feature the following Theorem is quite standard and, for the convenience of the reader, we will prove it in the Appendix (Subsection \ref{App-3sphere}).

Let $\widetilde{A}(X)=\{{ \widetilde{a}^{ij}(x)}\} _{i,j=1}^{N}$, $N\geq 2$ be a real-valued symmetric $N\times N$ matrix. Assume that the entries of matrix $\widetilde{A}$ are measurable function and it satisfies

\begin{equation}
\label{0-49r-a}
\lambda_0\left\vert\xi\right\vert^2\leq \widetilde{A}(X)\xi\cdot\xi\leq\lambda_0^{-1}\left\vert\xi\right\vert^2, \quad \hbox{for every } X, \xi\in\mathbb{R}^N,
\end{equation}
%\end{subequations}
where $\lambda_0\in(0,1]$.

\begin{theo}[\textbf{Three sphere inequality}] \label{new three sphere}
Let $\widetilde{r}_3$ and $\Lambda_0$ be positive numbers. Assume that $\widetilde{A}$ satisfies \eqref{0-49r-a} and
\begin{equation}
\label{0-49r-b}
\left\vert \widetilde{A}(X)-\widetilde{A}(Y)\right\vert\leq\frac{\Lambda_0}{\widetilde{r}_3} \left\vert X-Y \right\vert, \quad \hbox{for every } X, Y\in B_{\widetilde{r}_3}.
\end{equation}
Let $\widetilde{f}\in L^2(B_{\widetilde{r}_3})$ and let $u\in H^1(B_{\widetilde{r}_3})$ be a solution to
\begin{equation}
\label{equaz-49r}
Pu:=\mbox{div}(\widetilde{A}\nabla u)=\widetilde{f}\mbox{, }\quad\mbox{in } B_{\widetilde{r}_3}.
\end{equation}
Let $\widetilde{r}_1,\widetilde{r}_2,\widetilde{r}_3$ be such that $0<\widetilde{r}_1\leq \widetilde{r}_2<\widetilde{r}_3$. Let $\delta$ be such that
\begin{equation}
\label{delta-58r}
0<\delta\leq\frac{\widetilde{r}_3-\widetilde{r}_2}{2\widetilde{r}_3}\quad{ .}
\end{equation}
Denote by
\begin{equation}
\label{vartheta-58r}
\vartheta_0=\frac{\widetilde{r}_2^{-\beta}-\left[(1-\delta)\widetilde{r}_3\right]^{-\beta}}{\left[(1-2\delta)\widetilde{r}_1\right]
^{-\beta}-\left[(1-\delta)\widetilde{r}_3\right]^{-\beta}}\quad{ .}
\end{equation}
and
\begin{equation}
\label{K-58r}
C_{0}=\frac{e^{C\left[(\widetilde{r}_2\widetilde{r}_3^{-1})^{-\beta}-(1-\delta)^{-\beta}\right]}}{\delta^{4}}\quad{ ,}
\end{equation}
where $C$ depends on $\lambda_0,\Lambda_0$.

There exists $\beta_1\geq 1$ depending on $\lambda_0,\Lambda_0$ only such that if $\beta\geq \beta_1$ then the following inequality holds true
\begin{gather}
\label{1-58r}
\mathbb{}\int_{B_{\widetilde{r}_2}}\left\vert u\right\vert^2 \leq \\ \nonumber
\leq C_{0}\left(\int_{B_{\widetilde{r}_1}}\left\vert u\right\vert^2
+\widetilde{r}_3^2\int_{B_{\widetilde{r}_3}}
\left\vert\widetilde{f}\right\vert^2\right)
^{\vartheta_0}
\left(\int_{B_{\widetilde{r}_3}}\left\vert u\right\vert^2+\widetilde{r}_3^2\int_{B_{\widetilde{r}_3}}
\left\vert\widetilde{f}\right\vert^2\right)^{1-\vartheta_0}.
\end{gather}
\end{theo}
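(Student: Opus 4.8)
The plan is to reduce everything to the classical Hadamard three–sphere inequality obtained from a Carleman estimate (or, equivalently, from a logarithmic convexity argument), keeping track of how the weight exponent $\beta$ enters when $\widetilde{r}_1,\widetilde{r}_2,\widetilde{r}_3$ are comparable. First I would introduce the standard weight $w(X)=|X|^{-\beta}$ and a suitable cutoff function $\chi$ supported in the annulus $\{(1-2\delta)\widetilde{r}_1<|X|<(1-\delta)\widetilde{r}_3\}$ and equal to $1$ on $\{\widetilde{r}_1<|X|<\widetilde{r}_2\}$; the parameter $\delta$ in \eqref{delta-58r} is exactly what guarantees the cutoff can be placed so that $\widetilde{r}_2\le(1-\delta)\widetilde{r}_3$. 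Applying a Carleman estimate of the form $\int e^{2\gamma w}|\nabla v|^2 + \gamma^2\int e^{2\gamma w}w|v|^2 \le C\int e^{2\gamma w}|Pv|^2$, valid for $v=\chi u\in H^1_0$ of the operator $P=\mathrm{div}(\widetilde{A}\nabla\cdot)$ with Lipschitz coefficients satisfying \eqref{0-49r-a}–\eqref{0-49r-b}, is the engine; here $\beta$ plays the role of $\gamma$. The commutator terms $[P,\chi]u$ are supported in the two thin shells where $\chi$ transitions, so they are controlled by $\int_{B_{\widetilde{r}_1}}(|u|^2+\widetilde{r}_3^2|\nabla u|^2)$ near the inner radius and by $\int_{B_{\widetilde{r}_3}}(|u|^2+\widetilde{r}_3^2|\nabla u|^2)$ near the outer one, while $\chi Pu=\chi\widetilde{f}$ contributes the $\widetilde{r}_3^2\int_{B_{\widetilde{r}_3}}|\widetilde{f}|^2$ term.

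Next I would run the usual optimization: evaluating the Carleman inequality on the region where $\chi\equiv1$ and using that $w$ is radial and monotone, one gets, for every admissible $\beta$,
\begin{equation*}
\int_{B_{\widetilde{r}_2}}|u|^2 \;\le\; C\,e^{2\beta\left(\widetilde{r}_2^{-\beta}-[(1-\delta)\widetilde{r}_3]^{-\beta}\right)}\Big(A_1\Big) \;+\; C\,e^{2\beta\left([(1-2\delta)\widetilde{r}_1]^{-\beta}-\widetilde{r}_2^{-\beta}\right)}\Big(A_3\Big),
\end{equation*}
where $A_1=\int_{B_{\widetilde{r}_1}}|u|^2+\widetilde{r}_3^2\int_{B_{\widetilde{r}_3}}|\widetilde{f}|^2$ and $A_3=\int_{B_{\widetilde{r}_3}}|u|^2+\widetilde{r}_3^2\int_{B_{\widetilde{r}_3}}|\widetilde{f}|^2$. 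Choosing $\beta$ (within the continuum $\beta\ge\beta_1$) to balance the two exponential factors yields precisely the convex-combination exponent $\vartheta_0$ of \eqref{vartheta-58r}: the ratio of the two linear-in-$(\cdot)^{-\beta}$ exponents is $\vartheta_0/(1-\vartheta_0)$, so balancing gives $A_1^{\vartheta_0}A_3^{1-\vartheta_0}$, and the residual exponential prefactor collapses to $e^{C[(\widetilde{r}_2\widetilde{r}_3^{-1})^{-\beta}-(1-\delta)^{-\beta}]}$ after factoring $\widetilde{r}_3^{-\beta}$ out of every radius; the extra $\delta^{-4}$ in $C_0$ absorbs the Carleman constant's polynomial dependence on $\delta^{-1}$ coming from the gradient of the cutoff (two derivatives, each producing a $\delta^{-1}$, squared). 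The only remaining point is to pass from the $H^1$-gradient terms appearing in the commutator estimates to pure $L^2$ terms on the slightly larger balls: this is a routine Caccioppoli inequality for $Pu=\widetilde{f}$, which costs another factor absorbed into $C_0$ and only enlarges $B_{\widetilde{r}_1}$ and $B_{\widetilde{r}_3}$ by a fixed fraction — harmless since \eqref{delta-58r} leaves room, or alternatively one states the intermediate inequality on $(1\pm c\delta)$-dilated balls and rescales.

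The main obstacle I anticipate is not the derivation of the three–sphere inequality per se, which is textbook, but the \emph{uniform-in-$\beta$ bookkeeping} required to land exactly on the clean expressions \eqref{vartheta-58r}–\eqref{K-58r}: one must verify that the Carleman estimate holds for all $\beta\ge\beta_1$ with $\beta_1$ depending only on $\lambda_0,\Lambda_0$ (not on the radii or on $\delta$), which forces a careful choice of pseudoconvex weight — typically $w=|X|^{-\beta}$ works precisely because its Hessian's eigenvalue structure gives the needed coercivity once $\beta$ exceeds a threshold set by $\|\widetilde{A}\|_{C^{0,1}}$. Managing the $\delta$-dependence so that it appears only through the explicit $\delta^{-4}$ and the shifts $(1-\delta)$, $(1-2\delta)$, rather than contaminating $\beta_1$ or the convexity exponent, is the delicate part; I would handle it by fixing the cutoff's transition regions to have width proportional to $\delta\widetilde{r}_3$ and tracking each $\delta^{-1}$ explicitly through the commutator. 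Everything else — ellipticity, the Lipschitz perturbation of coefficients, the inhomogeneous term — enters exactly as in the homogeneous constant-coefficient model and is absorbed into the constant $C$ depending on $\lambda_0,\Lambda_0$.
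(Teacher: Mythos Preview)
Your proposal has a genuine gap: you conflate two distinct parameters. In the Carleman estimate underlying this three--sphere inequality the weight is $e^{2\tau|X|^{-\beta}}$, where $\beta$ is the \emph{fixed} exponent shaping the weight and $\tau$ is the \emph{large Carleman parameter} to be optimized. The paper applies the Carleman inequality to $\eta u$ (with $\eta$ your cutoff), uses Caccioppoli to eliminate gradient terms, and arrives at
\[
\int_{B_{r_2}}|u|^2\le \frac{C}{\delta^4}\Big\{e^{2\tau\big[((1-2\delta)r_1)^{-\beta}-r_2^{-\beta}\big]}\epsilon^2
+e^{2\tau\big[(1-\delta)^{-\beta}-r_2^{-\beta}\big]}K^2\Big\},
\]
for all $\tau\ge\tau_3$. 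Since the exponents are \emph{linear in $\tau$}, balancing over $\tau$ produces the H\"older interpolation with exponent $\vartheta_0$, and $\vartheta_0$ indeed depends on the fixed $\beta$ exactly as in \eqref{vartheta-58r}. The residual exponential prefactor comes from the case $\overline\tau<\tau_3$, where the trivial bound $K^2=K^{2(1-\vartheta_0)}K^{2\vartheta_0}\le e^{2\tau_3[r_2^{-\beta}-(1-\delta)^{-\beta}]}K^{2(1-\vartheta_0)}\epsilon^{2\vartheta_0}$ is invoked.

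Your scheme instead sets $\gamma=\beta$ and optimizes over $\beta$. This fails on two counts. First, the theorem asserts the inequality for \emph{every} $\beta\ge\beta_1$, with $\vartheta_0$ and $C_0$ explicit functions of that chosen $\beta$; optimizing over $\beta$ would at best yield the conclusion for one particular value, not the family of inequalities the main theorem actually needs (Step~3 of Section~\ref{step3} exploits precisely this freedom in $\beta$). Second, the dependence on $\beta$ in your displayed inequality is nonlinear --- $\beta$ appears both as a multiplicative prefactor and inside $r^{-\beta}$ --- so the balancing does not give a clean convex combination with the stated $\vartheta_0$. (Incidentally, your displayed inequality also swaps the roles of $A_1$ and $A_3$: the inner--ball term must carry the \emph{growing} exponential and the outer--ball term the \emph{decaying} one.)

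Two further points you would need to address. The paper obtains a Carleman estimate valid on all of $B_1\setminus\{0\}$ by \emph{gluing} two estimates --- one from \cite{M-R-V1} near the origin and one from H\"ormander's pseudoconvexity criterion on the outer annulus --- since neither is uniform on the whole punctured ball by itself. And for the nonhomogeneous term the paper subtracts the $H^1_0$ solution of $Pw=\widetilde f$ on $B_{\widetilde r_3}$ and reduces to the homogeneous case, rather than carrying $\widetilde f$ through the Carleman argument; your direct approach is feasible but you would have to check that the weighted $\int e^{2\tau|X|^{-\beta}}|\widetilde f|^2$ term, which blows up near the origin, can be controlled.
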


\bigskip

\section{Proof of the Main Theorem} \label{proofmain}
In order to prove Theorem \ref{MainTheo} we proceed in the following way.

Set
\[G \hbox{ the connected component of } \Omega_1\cap\Omega_2 \hbox{ whose closure contains } \Gamma^{(a)}.\]

\bigskip

\begin{description}
  \item[First step] In Proposition \ref{prop-20r} we prove that for a given $t_0>0$ there exists $T(\varepsilon)>2t_0$ such that if \eqref{3-141} is satisfied for $T=T(\varepsilon)$ and $u_j\in\mathcal{W}\left([0,T(\varepsilon)];\Omega\right)$ are the solutions to \eqref{1-141} when $\Omega=\Omega_j$, $j=1,2$ then

      \begin{equation*}
\label{1-enunciato-20r}
\sup_{t\in[0,t_0]}\left(\rho_0^{-n}\int_{\Omega_j\setminus G}u^2_j(x,t)dx\right)\leq C\omega(\varepsilon,t_0)\quad \hbox{, for } j=1,2,
\end{equation*}
where
$$\lim_{\varepsilon\rightarrow 0}\omega(\varepsilon,t_0)=0 \quad\hbox{ and }\quad \lim_{\varepsilon\rightarrow 0}T(\varepsilon)=+\infty.$$

  \item[Second step] First we prove (Proposition \ref{stima-dal-basso}) an estimate from below, in terms of the a priori information and boundary data, of the quantity $\sup\left\Vert u(\cdot,t)\right\Vert_{L^2(B_{\overline{\varrho}}(y_0))}$ where the sup is taken for $t\in [0,\overline{t}]$, $\overline{t}$ is large enough, $B_{\overline{\varrho}}(y_0)\subset\Omega$ and $\overline{\varrho}\in(0,\rho_0/2E]$. Afterwards (Proposition \ref{stimaforte}) we prove that if $t_0$ is large enough and
      \begin{equation*}
\label{3-34r}
\sup_{t\in[0,t_0]}\left(\rho_0^{-n}\int_{\Omega_j\setminus G}u^2_j(x,t)dx\right)\leq \eta^2
\end{equation*}
then
\begin{equation*}
\label{2-35r}
d_{\mathcal{H}}\left(\overline{\Omega}_1,\overline{\Omega}_2\right)\leq C\rho_0 \eta^{\alpha},
\end{equation*}
for suitable constant $C\geq 1$ and $\alpha\in(0,1)$.

\item[Third step] We conclude the proof of Theorem \ref{MainTheo}.

\end{description}

\subsection{Step 1} \label{step1}

\begin{prop}\label{prop-20r}
There exist $C\geq 1$ and $\overline{\varepsilon}, \overline{\sigma}, \vartheta_2\in (0,1]$ depending on $E, M, \lambda$ and $\Lambda$ only such that the following holds true.

Denoting
\begin{equation}
\label{3-20r}
T_{\sigma}:=\max\left\{2t_0,\sqrt{10}\rho_0\vartheta_2^{-\frac{1}{2}\sigma^{-(n+1)}}\right\},
\end{equation}
\begin{equation}
\label{Phi-20r}
\Phi(\sigma)=\sigma^{-\left(\frac{n+1}{4}\right)}(T_{\sigma}\rho_0^{-1})^{11/2}(H(T_{\sigma})+1)^2.
\end{equation}
Let us define for any $\varepsilon\in (0,\overline{\varepsilon}]$
 \begin{equation}
\label{3-epsilon20r}
T(\varepsilon):=T_{\sigma(\varepsilon)},
\end{equation}
where
\begin{equation}
\label{sigmaepsilon}
\sigma(\varepsilon):=\inf\{\sigma\in (0,\overline{\sigma}]: \quad \Phi(\sigma)\leq |\log \varepsilon|^{\frac{1}{8}}\},
\end{equation}
Let $u_j\in\mathcal{W}\left([0,T(\varepsilon)];\Omega\right)$ be the solution to \eqref{1-141} (when $T=T(\varepsilon)$) and $\Omega=\Omega_j$, $j=1,2$.

If, for a given $\varepsilon\in (0,\overline{\varepsilon}]$, we have
\begin{equation}
\label{2-20r}
\frac{1}{T(\varepsilon)\rho_0^{n-3}}\int^{T(\varepsilon)}_{0}\int_{\Sigma} \left\vert A(x)\nabla u_1\cdot\nu-A(x)\nabla u_2\cdot\nu\right\vert^2dSdt\leq \varepsilon^2
\end{equation}
then for every $t_0\in(0,T(\varepsilon)/2]$ we have
\begin{gather}
\label{1-enunciato-20r}
\sup_{t\in[0,t_0]}\left(\rho_0^{-n}\int_{\Omega_j\setminus G}u^2_j(x,t)dx\right)\leq C\omega(\varepsilon,t_0)\quad \hbox{, for } j=1,2,
\end{gather}
where
\begin{gather}
\label{omega-enunciato-20r}
\omega(\varepsilon,t_0)=(t_0\rho_0^{-1})^{6}(H(t_0))^2\left(\sigma(\varepsilon)\right)^{1/4}+\left\vert\log\varepsilon\right\vert^{-1/8}.
\end{gather}
\end{prop}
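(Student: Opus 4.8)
The plan is to combine a quantitative Cauchy-problem estimate for the FBI-transformed wave equation with the smallness-propagation (doubling / three-sphere) machinery of Subsection \ref{QEsucp}, and then to optimize the free parameters $\mu$ and $\sigma$ at the very end. First I would fix $\tau\in(0,t_0]$ and form the FBI transform $V_\mu^{(\tau)}=U_{1,\mu}^{(\tau)}-U_{2,\mu}^{(\tau)}$ of the difference $w:=u_1-u_2$ on the common region $G$. By Proposition \ref{iperb-elliptic}, $V_\mu^{(\tau)}$ solves the elliptic equation $\partial_y^2 V + \mathrm{div}(A\nabla_x V)=f_\mu$ in $G\times\mathbb{R}$, where by \eqref{1-63} and the regularity bound \eqref{1-s15a}--\eqref{1-s15b} of Theorem \ref{boundary-Colombini} the forcing term satisfies $\|f_\mu\|_{L^2}\lesssim \mu^{3/2}e^{-\frac{\mu}{4}(T-t_0)^2}(T\rho_0^{-1}+1)H(T)$; thus $f_\mu$ is exponentially small once $T$ is large compared with $t_0$ and $\mu$. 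On the Cauchy surface $\Sigma$ the Dirichlet datum of $w$ vanishes (the two problems have the same boundary data on $\Gamma^{(a)}\supset\Sigma$) and the Neumann datum is controlled by \eqref{2-20r}; via \eqref{1-60} this translates into an $\varepsilon$-type bound on the Cauchy data of $V_\mu^{(\tau)}$ on $\Sigma\times(-a,a)$ for a fixed $a$. The irregular part of $\partial G$ is not touched here because $\Sigma\subset\Gamma^{(a)}_{\rho_0}$ stays a uniform distance from $\Gamma^{(i)}$, so the Cauchy estimate of \cite{A-R-R-V} applies with constants depending only on the a priori data.

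Next I would propagate smallness from a neighborhood of $\Sigma$ into the interior of $G\times\mathbb{R}$ along a chain of balls, using the three-sphere / two-sphere-one-cylinder inequalities that underlie Theorems \ref{5-115}, \ref{5-115Boundary} (equivalently, the elliptic version in Theorem \ref{new three sphere}), to reach any point of $G$ that is at controlled distance from $\partial\Omega_1\cap\partial\Omega_2$; the number of balls in the chain, hence the iterated H\"older exponent, is bounded in terms of $M,E$. This yields a bound of the form $\|V_\mu^{(\tau)}(\cdot,0)\|_{L^2(K)}\le C(\varepsilon+\text{exp.\ small})^{\vartheta}\,(\text{a priori bound on }\|V_\mu^{(\tau)}\|)^{1-\vartheta}$ on interior sets $K$ approaching $\partial\Omega_1\cup\partial\Omega_2$, where the global bound on $V_\mu^{(\tau)}$ is $\lesssim \mu^{1/4}(T\rho_0^{-1}+1)H(T)$ by \eqref{1-60} and Theorem \ref{boundary-Colombini}. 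Then I would use \eqref{2-59} to replace $V_\mu^{(\tau)}(\cdot,0)$ by $w(\cdot,\tau)=u_1(\cdot,\tau)-u_2(\cdot,\tau)$, paying a further error $\lesssim \mu^{-1/2}\sup\|\partial_t w\|_{L^\infty}\lesssim \mu^{-1/2}(T\rho_0^{-1}+1)H(T)$. Since on $\Omega_j\setminus G$ one of the two solutions vanishes (it is the one whose domain does not contain that region, with zero Dirichlet data on the corresponding piece of $\Gamma^{(i)}$), the quantity $\rho_0^{-n}\int_{\Omega_j\setminus G}u_j^2(\cdot,\tau)$ is controlled, up to boundary-regularity factors $(t_0\rho_0^{-1})^{6}(H(t_0))^2$ coming from a trace/Poincar\'e estimate near $\Gamma^{(i)}$, by exactly these interior smallness bounds for $w$.

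Finally comes the parameter optimization, which I expect to be the main obstacle and is the reason for the peculiar exponents $\tfrac18,\tfrac14,\tfrac{11}{2}$ in \eqref{Phi-20r}. One is left with an estimate roughly of the shape
\[
\sup_{t\in[0,t_0]}\rho_0^{-n}\!\!\int_{\Omega_j\setminus G}\!\!u_j^2 \;\lesssim\; (t_0\rho_0^{-1})^{6}(H(t_0))^2\Big[\big(\Phi(\sigma)\big)^{?}\,\varepsilon^{\vartheta(\sigma)} + \mu^{-1}\Phi(\sigma)^2 + \mu^{3}e^{-c\mu T^2}\Phi(\sigma)^2\Big],
\]
where the propagation exponent $\vartheta(\sigma)$ degenerates like $\vartheta_2^{\sigma^{-(n+1)}}$ as the target set is pushed to within $\sigma\rho_0$ of the unknown boundary (this is why $T_\sigma$ in \eqref{3-20r} must blow up as $\sigma\to0$: one needs $T\gtrsim \rho_0\vartheta_2^{-\frac12\sigma^{-(n+1)}}$ for the exponentially small forcing term to beat $\varepsilon^{\vartheta(\sigma)}$, and correspondingly $\mu$ chosen a suitable power of $|\log\varepsilon|$). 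Choosing $\mu\sim|\log\varepsilon|^{?}$ so that the $\mu^{-1}$ and exponential terms are both $O(|\log\varepsilon|^{-1/8})$, and then choosing $\sigma=\sigma(\varepsilon)$ by \eqref{sigmaepsilon} as the smallest $\sigma$ with $\Phi(\sigma)\le|\log\varepsilon|^{1/8}$, makes the first bracketed term $\le \Phi(\sigma)\,\varepsilon^{\vartheta}\le |\log\varepsilon|^{1/8}\varepsilon^{\vartheta}$, which is $o(\sigma(\varepsilon)^{1/4})$ — hence absorbed — while the remaining terms are $O(|\log\varepsilon|^{-1/8})$; the $\sigma(\varepsilon)^{1/4}$ term in \eqref{omega-enunciato-20r} is precisely the residual coming from the width $\sigma\rho_0$ of the boundary strip that could not be reached by the chain of spheres. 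The facts that $\sigma(\varepsilon)\to0$ and $T(\varepsilon)=T_{\sigma(\varepsilon)}\to+\infty$ as $\varepsilon\to0$ follow because $\Phi$ is increasing near $0$ with $\Phi(0^+)=+\infty$ and $|\log\varepsilon|^{1/8}\to+\infty$, so $\sigma(\varepsilon)$ is forced to $0$ and $T_{\sigma(\varepsilon)}$ along with it. Bookkeeping the dependence of all constants purely on $E,M,\lambda,\Lambda$ (note $F$ enters only through $H(T)/\|\psi\|$ and is not needed here) is the last, purely technical, point.
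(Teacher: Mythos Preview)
Your FBI-transform/Cauchy-estimate/three-sphere pipeline and the final optimization are essentially the paper's approach, but the passage from ``$w=u_1-u_2$ is small in $G$'' to ``$u_j$ is small on $\Omega_j\setminus G$'' has a real gap. Your sentence ``on $\Omega_j\setminus G$ one of the two solutions vanishes \dots\ so $\int_{\Omega_j\setminus G}u_j^2$ is controlled by the interior smallness bounds for $w$'' is not correct: $u_2$ is not defined on $\Omega_1\setminus G$ (it is not simply zero there in any useful sense), and the smallness of $w$ has only been propagated \emph{inside} $G$, not to $\Omega_1\setminus G$. There is no trace/Poincar\'e mechanism that converts an estimate on $w$ in $G$ directly into an estimate on $u_1$ in $\Omega_1\setminus G$.

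What the paper actually does at this step is an \emph{energy identity for $u_1$ alone} on the set $\omega_r=\Omega_{1,r}\setminus V_r$, where $V_r$ is the connected component of $\Omega_{1,r}\cap\Omega_{2,r}$ reached by the chain of balls. One writes $\Omega_1\setminus G\subset\big[(\Omega_1\setminus\Omega_{1,r})\setminus G\big]\cup\omega_r$, bounds the thin strip $\Omega_1\setminus\Omega_{1,r}$ trivially by $C\sigma$, and on $\omega_r$ multiplies the wave equation by $\partial_t u_1$ and integrates by parts. The resulting boundary terms live on $\partial\omega_r=\Gamma_{1,r}\cup\Gamma_{2,r}$. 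On $\Gamma_{1,r}$ (distance $r=\sigma\rho_0$ from $\partial\Omega_1$) one uses $u_1=0$ on $\Gamma_1^{(i)}$ together with $C^{1,\alpha}$ regularity and an interpolation $\|\partial_t u_1\|_\infty\le C\|u_1\|_\infty^{1/2}\|\partial_t^2 u_1\|_\infty^{1/2}$ to get a $\sigma^{1/4}$ contribution. On $\Gamma_{2,r}\subset\partial V_r\subset G$ one writes $u_1=u+u_2$: the $u$ part is exactly where your FBI/three-sphere estimate on $\|u\|_{L^\infty(\Gamma_{2,r}\times[0,t_0])}$ enters, while the $u_2$ part is again small (of order $\sigma^{1/4}$) because $\Gamma_{2,r}$ is within $\sigma\rho_0$ of $\partial\Omega_2$ where $u_2=0$. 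This is where the factors $(t_0\rho_0^{-1})^6(H(t_0))^2$ and the exponent $\sigma^{1/4}$ in \eqref{omega-enunciato-20r} genuinely come from; without this energy argument your scheme does not close. The subsequent choices $\mu T^2=\tfrac{1}{5}|\log\varepsilon|$, $T=T_\sigma$, $\sigma=\sigma(\varepsilon)$ are then as you outline.
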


\textbf{Proof of Proposition \ref{prop-20r}.}
Let $t_0>0$. We begin by assuming only that $T\geq 2t_0$. Let $u_j\in\mathcal{W}\left([0,T];\Omega\right)$ be the solution to \eqref{1-141} when $\Omega=\Omega_j$, $j=1,2$. Let $u=u_1-u_2$, in $G\times [0,T]$ and for any positive number $\mu$ such that $\mu T^2\geq 1$ and $\tau\in(0,T/2]$ denote by $U^{(\tau)}_{\mu}$ the FBI transform of $u$ defined by

\begin{gather}
\label{defFBI-u}
U^{(\tau)}_{\mu}(x,y)=\\ \nonumber
=\sqrt{\frac{\mu}{2\pi}}\int^T_0e^{-\frac{\mu}{2}(iy+\tau-t)^2}u(x,t)dt, \quad \hbox{for every } (x,y)\in G\times\mathbb{R}.
\end{gather}
By \eqref{1-141}, \eqref{3-141} and Proposition \ref{iperb-elliptic} we have

\begin{equation}
\label{4-6r}
\left\{\begin{array}{ll}
\partial^2_{y}U^{(\tau)}_{\mu}+\mbox{div}\left(A(x)\nabla U^{(\tau)}_{\mu}\right)=f^{(\tau)}_{\mu}(x,y), \quad \hbox{in } G\times\mathbb{R},\\[2mm]
U^{(\tau)}_{\mu}(x,y)=0, \quad \hbox{for } (x,y)\in\Sigma\times\mathbb{R},\\[2mm]
\int_{\Sigma} \left\vert A(x)\nabla U^{(\tau)}_{\mu}(x,y)\cdot\nu\right\vert^2dSdt\leq C\mu^{1/2}T\rho_0^{n-3}e^{\mu y^2}\varepsilon^2,
\end{array}\right.
\end{equation}
and $C$ is an absolute constant.

By \eqref{psi},  Proposition \ref{pag56-62}, Proposition \ref{iperb-elliptic}, by Theorem \ref{boundary-Colombini} and by the elementary inequality $s^{3/2}e^{-s^2/8}\leq ce^{-s^2/10}$ we have, for every $R>0$

\begin{equation} \label{1-9r}
\left\Vert f_{\mu}\right\Vert_{L^{\infty}(G\times(-R,R))}\leq CT\rho_0^{-3}H(T)e^{\mu(R^2/2-T^2/10)},
\end{equation}
and

\begin{equation} \label{3-9r}
\left\Vert U^{(\tau)}_{\mu}\right\Vert_{L^{\infty}(G\times(-R,R))}\leq CT\rho_0^{-1}H(T)e^{\mu R^2/2},
\end{equation}
where $C$ depends on $E, M,\lambda$ and $\Lambda$ only. Here and in the sequel, we fix $\alpha=\frac{1}{2}$ in Theorem \ref{boundary-Colombini}.

Now denote by $P_1=P_0-\frac{\rho_0}{2E}\nu$, $\widetilde{P}_1=(P_1,0)$, $\rho_1=\sigma_1\rho_0$, where $\sigma_1=\frac{1}{4E\sqrt{1+E^2}}$ and denote by
\begin{equation}
\label{eta-11r}
\varepsilon_1=\frac{(\mu T^2)^{1/4}\varepsilon}{(H(T)+1)T\rho_0^{-1}}.
\end{equation}

By \eqref{4-6r}, \eqref{1-9r} and by applying \cite[Theorem 1.7]{A-R-R-V} we have
\begin{equation} \label{1-11r}
\left\Vert U^{(\tau)}_{\mu}\right\Vert_{L^{2}(\widetilde{B}_{\rho_1}(\widetilde{P}_1))}\leq CT\rho_0^{-1}H(T)e^{\mu\rho_0^2/2}\left(e^{-\mu T^2/10}+\varepsilon_1\right)^{\vartheta_1},
\end{equation}
where $\vartheta_1$, $\vartheta_1\in (0,1)$, and $C$ depend on $E, M,\lambda$ and $\Lambda$ only.

Let $\sigma\in(0,\sigma_1]$ and denote by $r=\rho_0\sigma$. Let $V_r$ be the connected component of $\Omega_{1,r}\cap\Omega_{2,r}$ whose closure contains $B_{\rho_1}(P_1))$. Moreover denote by $\omega_r=\Omega_{1,r}\setminus V_r$. We have
\begin{subequations}
\label{V-r}
\begin{equation}
\label{V-r-a}
\Omega_1\setminus G\subset\left[\left(\Omega_1\setminus\Omega_{1,r}\right)\setminus G\right]\cup \omega_r,
\end{equation}
\begin{equation}
\label{V-r-b}
\partial\omega_r=\Gamma_{1,r}\cup\Gamma_{2,r},
\end{equation}
\end{subequations}
where $$ \Gamma_{1,r}\subset\partial \Omega_{1,r}, \quad \Gamma_{2,r}\subset\partial \Omega_{2,r}\cap\partial V_r.$$

Let $z\in \Gamma_{2,r}$ be fixed. Since $V_r$ is connected, $\Gamma_{2,r}\subset\partial V_r$ and $P_1\in V_r$, there exists a continuous path
$\gamma:[0.1]\to V_r$ such that $\gamma(0)=P_1$, $\gamma(1)=z$. Let us define
$0=s_0<s_1<\ldots<s_N=1$, according to the following rule. We set $s_{k+1}=\max\{s\ |\ |\gamma(s)-x_k|=\frac{r}{2}\}$ if
$|x_k-z|>\frac{r}{2}$, otherwise we stop the process and set $N=k+1$, $s_N=1$. By \eqref{1-138a} we have $N\leq c_n M\sigma^{-n}$ where $c_n$ depends on $n$ only. Let $x_k=\gamma(s_k)$ and $\widetilde{x}_k=(x_k,0)$. The balls (of $\mathbb{R}^{n+1}$)
$\widetilde{B}_{r/4}(\widetilde{x}_k)$ are pairwise disjoint for $k=0,\ldots,N-1$ and $|\widetilde{x}_{k+1}-\widetilde{x}_k|=\frac{r}{2}$. We have that
$\widetilde{B}_{r/4}(\widetilde{x}_{k+1})\subset \widetilde{B}_{3r/4}(\widetilde{x}_k)$ and $\widetilde{B}_{r}(\widetilde{x}_{k})\subset G\times(-r,r)$ and therefore, by the three sphere inequality \eqref{1-58r} we have

\begin{gather} \label{3-12r}
\left\Vert U^{(\tau)}_{\mu}\right\Vert_{L^{2}(\widetilde{B}_{r/4}(\widetilde{x}_{k+1}))}\leq \left\Vert U^{(\tau)}_{\mu}\right\Vert_{L^{2}(\widetilde{B}_{3r/4}(\widetilde{x}_{k}))}\leq \\ \nonumber
\leq C\left(\|U^{(\tau)}_{\mu}\|_{L^{2}(\widetilde{B}_{r/4}(\widetilde{x}_{k}))}+\|f^{(\tau)}_{\mu}\|_{L^{2}(\widetilde{B}_{r}(\widetilde{x}_{k}))}\right)^{\vartheta_{\ast}}
    \left(\|U^{(\tau)}_{\mu}\|_{L^{2}(\widetilde{B}_{r}(\widetilde{x}_{k}))}+\|f^{(\tau)}_{\mu}\|_{L^{2}(\widetilde{B}_{r}(\widetilde{x}_{k}))}\right)^{1-\vartheta_{\ast}},
\end{gather}
where $C$ and $\vartheta_{\ast}$, $0<\vartheta_{\ast}<1$, depend on $E, \lambda$ and $\Lambda$ only.

Now, we denote by

\begin{equation}
\label{alpha-k}
\alpha_k=\frac{\left\Vert U^{(\tau)}_{\mu}\right\Vert_{L^{2}(\widetilde{B}_{r/4}(\widetilde{x}_k))}e^{-\mu r^2/2}}{T\rho_0^{-1}(H(T)+1)}+e^{-\mu T^2/10} \quad \hbox{for } k=0,\ldots,N,
\end{equation}
and by \eqref{3-12r}, \eqref{1-9r} and \eqref{3-9r} we have

\begin{equation}
\label{ricorsive}
\alpha_{k+1}\leq C \alpha^{\vartheta_{\ast}}_{k}\quad \hbox{for } k=0,\ldots,N-1,
\end{equation}
where $C$ and $\vartheta_{\ast}$, $0<\vartheta_{\ast}<1$, depend on $E, \lambda$ and $\Lambda$ only.
By iterating \eqref{ricorsive} we get
\begin{equation}
\label{alphaN}
\alpha_{N}\leq C^{1/1-\vartheta_{\ast}} \alpha^{\vartheta_{\ast}^{N}}_{0}.
\end{equation}
Now let us denote by $\vartheta_2=\min\left\{\vartheta_1,\vartheta_{\ast}^{c_n M}\right\}$. By \eqref{alpha-k} and \eqref{alphaN} we have
\begin{gather}
\label{1-14r}
\left\Vert U^{(\tau)}_{\mu}\right\Vert_{L^{2}(\widetilde{B}_{r/4}(\widetilde{z}))}\leq CT\rho_0^{-1}H(T)e^{\mu r^2/2}\times\\ \nonumber \times\left(\frac{\left\Vert U^{(\tau)}_{\mu}\right\Vert_{L^{2}(\widetilde{B}_{r/4}(\widetilde{P}_1))}e^{-\mu r^2/2}}{T\rho_0^{-1}(H(T)+1)}+e^{-\mu T^2/10}\right)^{\vartheta_2^{\sigma^{-n}}},
\end{gather}
where $C$ depends on $E, M, \lambda$ and $\Lambda$ only. Moreover, by applying \cite[Theorem 8.17]{GT} and by using \eqref{1-9r}, \eqref{1-11r} and \eqref{1-14r}  we have

\begin{gather}
\label{3-14r}
\left\vert U^{(\tau)}_{\mu}(z,0)\right\vert\leq CT\rho_0^{-1}(H(T)+1)e^{\mu r^2/2}\varepsilon_2,
\end{gather}
where
\begin{equation}
\label{eta-2}
\varepsilon_2=\sigma^{-\left(\frac{n+1}{2}\right)}\left(e^{-\mu T^2/10}+e^{\mu \rho_0^2/2}\left(e^{-\mu T^2/10}+\varepsilon_1\right)^{\vartheta_2}\right)^{\vartheta_2^{\sigma^{-n}}}
\end{equation}
and $C$ depends on $E, M, \lambda$ and $\Lambda$ only.

By \eqref{3-14r}, \eqref{1-s15} and \eqref{2-59}  we have

\begin{gather}
\label{15r}
\left\Vert u \right\Vert_{L^{\infty}(\Gamma_{2,r}\times[0,t_0])}\leq C (\mu T^2)^{-1/2}\left(\rho_0^{-1}T\right)^2H(T)+\\ \nonumber
+\sup_{\tau\in[0,t_0]}\left\Vert U^{(\tau)}_{\mu}(\cdot,0) \right\Vert_{L^{\infty}(\Gamma_{2,r})}\leq C(T\rho_0^{-1})^3(H(T)+1)\varepsilon_3,
\end{gather}
where
\begin{equation}
\label{eta-3}
\varepsilon_3=(\mu T^2)^{-1/2}+e^{\mu r^2/2}\varepsilon_2
\end{equation}
and $C$ depends on $E, M, \lambda$ and $\Lambda$ only.

\bigskip

By \eqref{V-r-a} and Schwarz inequality we have, for any $t\in (0,t_0]$,

\begin{gather}
\label{OmG-1}
\int_{\Omega_1\setminus G}u_1^2(x,t)dx=\\ \nonumber
=\int_{\Omega_1\setminus G}\left(\int_0^{t}\partial_\xi u_1(x,\xi) d\xi\right)^2\leq t_0\int_0^{t_0}\int_{\Omega_1\setminus G}\left\vert \partial_\xi u_1(x,\xi)\right\vert^2 dxd\xi \leq\\ \nonumber \leq t_0\int_0^{t_0}\int_{\omega_r}\left\vert \partial_\xi u_1(x,\xi)\right\vert^2 dxd\xi+t_0\int_0^{t_0}\int_{\Omega_1\setminus \Omega_{1,r}}\left\vert \partial_\xi u_1(x,\xi)\right\vert^2 dxd\xi.
\end{gather}

Now by \eqref{1-138} we have
\begin{equation}
\label{18oct}
\left\vert\Omega_1\setminus \Omega_{1,r} \right\vert\leq C \rho_0^{n}\sigma,
\end{equation}
where $C$ depends on $E$ and $M$ only.

By \eqref{1-138a} \eqref{OmG-1}, \eqref{18oct} and \eqref{1-s15a} we have, for any $t\in (0,t_0]$,
\begin{gather}
\label{OmG-2}
\rho_0^{-n}\int_{\Omega_1\setminus G}u_1^2(x,t)dx\leq\\ \nonumber
 \leq t_0\rho_0^{-n}\int_0^{t_0}\int_{\omega_r}\left\vert \partial_\xi u_1(x,\xi)\right\vert^2dxd\xi+C\left(t_0\rho_0^{-1}\right)^6H(t_0)^2\sigma,
\end{gather}
where $C$ depends on $E, M, \lambda$ and $\Lambda$ only.

Now, in order to estimate from above the integral on the right hand side of \eqref{OmG-2} we multiply both the side of the equation $\partial^2_{t}u_1-\mbox{div}\left(A(x)\nabla u_1\right)=0$ by $\partial_t u_1$ and integrate over $\omega_r$ and by integration by parts we have, for every $\xi\in[0,t_0]$,

\begin{gather}
\label{1-144}
\frac{1}{2}\int_{\omega_r}\left(\left\vert \partial_\xi u_1(x,\xi)\right\vert^2+A(x)\nabla u_1(x,\xi)\cdot \nabla u_1(x,\xi)\right)dx=\\ \nonumber
=\int_0^{\xi}\int_{\Gamma_{1,r}}\left(A(x)\nabla u_1(x,t)\right)\partial_t u_1(x,t)dSdt+\int_0^{\xi}\int_{\Gamma_{2,r}}\left(A(x)\nabla u_1(x,t)\right)\partial_t u_1(x,t)dSdt:=J_1+J_2.
\end{gather}

\bigskip

\textbf{ Estimate of $J_1$}.

By Schwarz inequality and by \eqref{1-65a} we have
\begin{gather}
\label{3/2-1r}
|J_1|\leq\lambda^{-1}\left(\int_0^{\xi}\int_{\Gamma_{1,r}}\left\vert\nabla u_1\right\vert^2dSdt\right)^{1/2}\left(\int_0^{\xi}\int_{\Gamma_{1,r}}\left\vert\partial_t u_1(x,t)\right\vert^2dSdt\right)^{1/2}.
\end{gather}

By \eqref{3/2-1r} and \eqref{1-s15b} we have, for every $\xi\in[0,t_0]$,
\begin{gather}
\label{2-1r}
|J_1|\leq C\left((t_0\rho_0^{-1}+1)t_0\rho_0^{n-3}\right)^{1/2}H(t_0)\left(\int_0^{\xi}\int_{\Gamma_{1,r}}\left\vert\partial_t u_1(x,t)\right\vert^2dSdt\right)^{1/2},
\end{gather}
where $C$ depends on $E, M, \lambda$ and $\Lambda$ only.

Now, by interpolation inequality we have

\begin{gather*}
\left\Vert\partial_tu_1\right\Vert_{L^{\infty}(\Gamma_{1,r}\times[0,t_0])}\leq C\left\Vert u_1\right\Vert^{1/2}_{L^{\infty}(\Gamma_{1,r}\times[0,t_0])}
\left\Vert\partial^2_tu_1\right\Vert^{1/2}_{L^{\infty}(\Gamma_{1,r}\times[0,t_0])},
\end{gather*}
where $C$ is an absolute constant, hence by using \eqref{2-1r}, \eqref{1-s15} and recalling that $u_1=0$ on $\Gamma_{1}\times[0,T]$ we obtain
\begin{gather}
\label{1-2r}
|J_1|\leq C(t_0\rho_0^{-1})^{5/2}\rho_0^{n-2}\left(H(t_0)\right)^2\sigma^{1/4},
\end{gather}
where $C$ depends on $E, M, \lambda$ and $\Lambda$ only.

\bigskip

\textbf{Estimate of $J_2$}.

By Schwarz inequality, \eqref{1-65a} and \eqref{1-s15b} we have, for every $\xi\in[0,t_0]$,
\begin{gather}
\label{1-3r}
|J_2|\leq C\left(t^2_0\rho_0^{n-4}\right)^{1/2}H(t_0)
\left(\int_0^{\xi}\int_{\Gamma_{2,r}}\left\vert\partial_t u_1(x,t)\right\vert^2dSdt\right)^{1/2},
\end{gather}
where $C$ depends on $ E, M, \lambda$ and $\Lambda$ only.

By the triangle inequality and taking into account that $u=u_1-u_2$ on $\Gamma_{2,r}\times [0,T]$ we have, for every $\xi\in[0,t_0]$,

\begin{gather}
\label{2-3r}
\left(\int_0^{\xi}\int_{\Gamma_{2,r}}\left\vert\partial_t u_1(x,t)\right\vert^2dSdt\right)^{1/2}\leq  \\ \nonumber
\leq C\left(t_0\rho_0^{n-1}\right)^{1/2}\left(\left\Vert\partial_tu\right\Vert_{L^{\infty}(\Gamma_{2,r}\times[0,t_0])}+\left\Vert\partial_tu_2
\right\Vert_{L^{\infty}(\Gamma_{2,r}\times[0,t_0])}\right),
\end{gather}
where $C$ depends on $E$ and $M$ only.

Arguing as in the estimate of $J_1$, by \eqref{15r}, \eqref{1-3r}, \eqref{2-3r} and \eqref{1-s15a} we have
\begin{gather}
\label{J2}
|J_2|\leq  C\rho_0^{n-2}(t_0\rho_0^{-1})^{5/2}(H(t_0))^2\sigma^{1/4}+ \\ \nonumber
+C\rho_0^{n-2}(T\rho_0^{-1})^3(H(T)+1)^2\varepsilon_3^{1/2},
\end{gather}
where $C$ depends on $ E, M, \lambda$ and $\Lambda$ only.

By \eqref{OmG-2}, \eqref{1-144}, \eqref{1-2r} and \eqref{J2} we have, for every $t\in (0,t_0]$,
\begin{gather}
\label{1-16r}
\rho_0^{-n}\int_{\Omega_1\setminus G}u_1^2(x,t)dx\leq  C(t_0\rho_0^{-1})^{6}(H(t_0))^2\sigma^{1/4}+ \\ \nonumber
+C(T\rho_0^{-1})^5(H(T)+1)^2\varepsilon_3^{1/2},
\end{gather}
where $C$ depends on $\alpha, E, M, \lambda$ and $\Lambda$ only.
In order estimate from above the right hand side of \eqref{1-16r} first we assume
\begin{equation*}
\label{varepsilon1-1}
\varepsilon\leq e^{-5}.
\end{equation*}
Let $\mu$ and $T$ be such that
\begin{equation}
\label{esse}
\mu T^2=\frac{1}{5}\left\vert\log\varepsilon\right\vert.
\end{equation}
By \eqref{eta-11r}, \eqref{esse} we have trivially

\begin{equation}
\label{harm-1}
e^{-\mu T^2/10}+\varepsilon_1\leq c\varepsilon^{1/2},
\end{equation}
where $c$ is an absolute constant. Hence, taking into account \eqref{eta-2} and \eqref{eta-3} we have
\begin{gather}
\label{harm-2}
\varepsilon_3^{1/2}\leq (\mu T^2)^{-1/4}+e^{ \mu T^2(\rho_0T^{-1})^2\sigma^2/4}\varepsilon_2^{1/2}\leq\\
\nonumber \leq (\mu T^2)^{-1/4}+C\sigma^{-\left(\frac{n+1}{4}\right)}e^{\frac{1}{4}\mu K(\sigma,T)},
\end{gather}
where
\begin{equation}
\label{Kappa}
K(\sigma, T)=2\rho_0^2\sigma^2-\frac{T^2}{5}\vartheta_2^{\sigma^{-n-1}}.
\end{equation}

Let us choose

\begin{equation}
\label{Tsigma}
T=T_{\sigma}=\max\left\{2t_0,\sqrt{10}\rho_0\vartheta_2^{-\frac{1}{2}\sigma^{-(n+1)}}\right\},
\end{equation}
and we have $K(\sigma, T_{\sigma})\leq-3\rho_0^2$. Hence by \eqref{harm-2} and \eqref{Tsigma} we have

\begin{gather}
\label{harm-3}
\varepsilon_3^{1/2}\leq C \sigma^{-\left(\frac{n+1}{4}\right)}\left(T_{\sigma}\rho_0^{-1}\right)^{1/2}\left\vert\log\varepsilon\right\vert^{-1/4},
\end{gather}
where $C$ depends on $E, M, \lambda$ and $\Lambda$ only.
By \eqref{1-16r} and \eqref{harm-3} we have, for every $t\in (0,t_0]$ and $\sigma\in (0, \sigma_1]$,
\begin{gather}
\label{1-19r}
\rho_0^{-n}\int_{\Omega_1\setminus G}u_1^2(x,t)dx\leq \\ \nonumber
\leq C\left((t_0\rho_0^{-1})^{6}(H(t_0))^2\sigma^{1/4}+\Phi(\sigma)\left\vert\log\varepsilon\right\vert^{-1/4}\right),
\end{gather}
where $C$ depends on $E, M, \lambda$ and $\Lambda$ only and $\Phi(\sigma)$ is defined by \eqref{Phi-20r}.

Let $\overline{\sigma}=\min\{\sigma_1,(2n|\log\vartheta_2|)^{1/(n+1)}\}$, by \eqref{Tsigma} we have that $\Phi$ is a decreasing function in $(0,\overline{\sigma}]$, so that $\min_{(0,\overline{\sigma}]}\Phi=\Phi(\overline{\sigma})$. Now, let us denote by $\overline{\varepsilon}=\min\{e^{-5},e^{-(\Phi(\overline{\sigma}))^8}\}$ and for any $\varepsilon\in (0,\overline{\varepsilon}]$ let us choose $\sigma=\sigma(\varepsilon)$ where $\sigma=\sigma(\varepsilon)$ is defined by \eqref{sigmaepsilon}. By \eqref{1-19r} we have
\begin{gather}
\label{1-20r}
\rho_0^{-n}\int_{\Omega_1\setminus G}u_1^2(x,t)dx\leq C\omega(\varepsilon,t_0),
\end{gather}
where $C$ depends on $E, M, \lambda$ and $\Lambda$ only and $\omega(\varepsilon,t_0)$ is defined by \eqref{omega-enunciato-20r}.$\square$

\subsection{Step 2} \label{step2}
\begin{prop}\label{stima-dal-basso}
Let $\overline{\varrho}\in(0,\rho_0/2E]$ and let $y_0\in \Omega$ be such that $B_{\overline{\varrho}}(y_0)\subset\Omega$.
Assume that $u$ is solution to \eqref{1-141}. Then there exists a constant $C_F$, $C_F\geq2$, depending on $E, M, \lambda,\Lambda,\overline{\varrho}\rho_0^{-1}$ and $F$ only such that if
$\overline{t}\geq t_{\ast}:=\max\{C_F\rho_0,2t_1\}$  then the following inequality holds true
\begin{equation}
\label{stimabasso}
\overline{t}\rho_0^{-1}H(\overline{t})e^{-\mathcal{F}(\overline{t})}
\leq
\sup_{t\in[0,\overline{t}]}\left\Vert u(\cdot,t)\right\Vert_{L^2(B_{\overline{\varrho}}(y_0))},
\end{equation}
where
\begin{equation}
\label{F-CALL}
\mathcal{F}(\overline{t})=\left(\frac{C_F(\overline{t}\rho_0^{-1})^3H(\overline{t})}
{H(t_1)}\right)^2.
\end{equation}
\end{prop}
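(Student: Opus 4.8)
The plan is to argue by contradiction, treating the statement as a quantitative unique continuation estimate: since $u$ is driven by a boundary datum that, by \eqref{EFFEapb}, is not small, it cannot be too small on a fixed interior ball over a long time interval. Suppose $\eta:=\sup_{t\in[0,\overline{t}]}\|u(\cdot,t)\|_{L^2(B_{\overline{\varrho}}(y_0))}$ is smaller than the right-hand side of \eqref{stimabasso}; I will show that then $u(\cdot,t^*)$ is small near $\Gamma^{(a)}$ for some $t^*\le\overline{t}/2$, contradicting \eqref{EFFEapb}. First, by \eqref{EFFEapb} there is $(P^*,t^*)\in\Gamma^{(a)}\times(0,t_1]$ with $|u(P^*,t^*)|=|\psi(P^*,t^*)|\ge H(t_1)/F$. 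Since $\psi(\cdot,t^*)$ vanishes on $\Gamma^{(i)}$ and $\|\psi(\cdot,t^*)\|_{C^{1,1}(\partial\Omega)}\le H(t_1)$, the point $P^*$ lies at distance $\ge c\rho_0/F$ from $\Gamma^{(i)}$, hence $P^*\in\Gamma^{(a)}_{c\rho_0/F}$, a connected set. By Theorem \ref{boundary-Colombini} (with $T=\overline{t}$, $\alpha=\tfrac12$) we have $\|u(\cdot,t^*)\|_{C^{0,1}(\Omega)}\le C(\overline{t}\rho_0^{-1}+1)H(\overline{t})$, so that, setting
\[
r^*:=c_E\,\rho_0\,\frac{H(t_1)}{F\,(\overline{t}\rho_0^{-1}+1)H(\overline{t})},\qquad x^{**}:=P^*-r^*\nu(P^*),
\]
with $c_E$ small (depending on $E$) the ball $B_{r^*}(x^{**})$ lies in $\Omega$ by \eqref{1-138b}, and $|u(\cdot,t^*)|\ge H(t_1)/(2F)$ on it; thus $\big(\rho_0^{-n}\int_{B_{r^*}(x^{**})}u^2(x,t^*)\,dx\big)^{1/2}\ge c\,(r^*\rho_0^{-1})^{n/2}H(t_1)/F$.

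Next I pass to the FBI transform, as in Section \ref{step1}. Fix $\mu>0$ with $\mu\overline{t}^{\,2}\ge1$ (to be chosen), put $\tau=t^*\le t_1\le\overline{t}/2$, and let $U:=U^{(t^*)}_\mu$ be the FBI transform of $u$ over $[0,\overline{t}]$ (Subsection \ref{FBI}). By Proposition \ref{iperb-elliptic}, $U$ solves $\partial_y^2U+\divrg(A(x)\nabla_x U)=f_\mu$ in $\Omega\times\R$, and, arguing as in Section \ref{step1} (cf. \eqref{1-9r}, \eqref{3-9r}, using Theorem \ref{boundary-Colombini}), for every $R>0$ one has $\|f_\mu\|_{L^\infty(\Omega\times(-R,R))}\le C\overline{t}\rho_0^{-3}H(\overline{t})e^{\mu(R^2/2-\overline{t}^{\,2}/10)}$ and $\|U\|_{L^\infty(\Omega\times(-R,R))}\le C\overline{t}\rho_0^{-1}H(\overline{t})e^{\mu R^2/2}$. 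By \eqref{1-60} (with $j=0$) and the definition of $\eta$, on the ball $\widetilde{B}_r(\widetilde{y}_0)\subset\Omega\times\R$ with $r\simeq\rho_0\mu^{-1/2}$ one gets $\|U\|_{L^2(\widetilde{B}_r(\widetilde{y}_0))}\le C\mu^{1/4}\overline{t}^{\,1/2}\eta$, while by \eqref{2-59} and \eqref{1-s15a}, $\|U(\cdot,0)-u(\cdot,t^*)\|_{L^\infty(\Omega)}\le C\mu^{-1/2}(\overline{t}\rho_0^{-1})^2\rho_0^{-1}H(\overline{t})$.

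Now I propagate the smallness of $U$ near $\widetilde{y}_0$ towards the boundary. Join $\widetilde{y}_0$ to $\widetilde{x^{**}}:=(x^{**},0)$ by a chain of $N$ balls contained in $\Omega\times(-1,1)$, running along a path in $\Omega$ from $y_0$ towards $P^*$ inside $\Gamma^{(a)}_{c\rho_0/F}$ and refined to scale $\simeq r^*$ near $x^{**}$; since $\rho_0/r^*\simeq F(\overline{t}\rho_0^{-1}+1)H(\overline{t})/H(t_1)$ and, by \eqref{1-138a}, the diameter of $\Omega$ is $\lesssim\rho_0$, one may take $N\le C_F(\overline{t}\rho_0^{-1})H(\overline{t})/H(t_1)$. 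Iterating the three sphere inequality \eqref{1-58r} of Theorem \ref{new three sphere} along the chain and absorbing the exponentially small contribution of $f_\mu$ gives, for some $\vartheta\in(0,1)$ depending on $\lambda,\Lambda,E$,
\[
\|U\|_{L^2(\widetilde{B}_{r^*/2}(\widetilde{x^{**}}))}\le C\,\overline{t}\rho_0^{-1}H(\overline{t})\,\Big(\tfrac{\mu^{1/4}\overline{t}^{\,1/2}\eta+e^{-\mu\overline{t}^{\,2}/10}}{\overline{t}\rho_0^{-1}H(\overline{t})}\Big)^{\vartheta^{\,N}}.
\]
Then \cite[Theorem 8.17]{GT} on $\widetilde{B}_{r^*/2}(\widetilde{x^{**}})$ bounds $\|U(\cdot,0)\|_{L^\infty(B_{r^*/4}(x^{**}))}$ by the right-hand side above plus the $f_\mu$-tail, and \eqref{2-59} yields a smallness estimate for $\|u(\cdot,t^*)\|_{L^2(B_{r^*/4}(x^{**}))}$ of the form $C\rho_0^{n/2}\big[\overline{t}\rho_0^{-1}H(\overline{t})(\cdots)^{\vartheta^{\,N}}+\mu^{-1/2}(\overline{t}\rho_0^{-1})^2\rho_0^{-1}H(\overline{t})\big]$.

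Comparing this with the lower bound of the first paragraph and choosing $\mu$ to balance the three competing quantities — the tail $e^{-\mu\overline{t}^{\,2}/10}$, the FBI error $\mu^{-1/2}$, and the amplification $\vartheta^{\,N}$ — forces $\eta$ to be no smaller than the right-hand side of \eqref{stimabasso}; tracking all the dependencies (the optimal $\mu$ turns out polynomial in $\overline{t}\rho_0^{-1}$ and $H(\overline{t})/H(t_1)$, and the powers $6$ and $2$ in \eqref{F-CALL} reflect the combined loss coming from the chain length $N$ and from this choice of $\mu$) one arrives at \eqref{stimabasso} with $\mathcal F$ as in \eqref{F-CALL}, the hypotheses $\overline{t}\ge2t_1$ and $\overline{t}\ge C_F\rho_0$ being used, respectively, to make $\tau=t^*\le\overline{t}/2$ admissible for the FBI transform and to make $\overline{t}\rho_0^{-1}$ large enough that the tail is dominated. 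I expect the delicate point to be the propagation step: because $B_{r^*}(x^{**})$ is at distance only $\sim r^*\ll\rho_0$ from $\partial\Omega$, the chain must be refined down to scale $r^*$, and one must use the \emph{sharp} three sphere inequality (Theorem \ref{new three sphere}), rather than a crude iteration, to keep the compounded exponent $\vartheta^{\,N}$ from degrading the rate, and then tune it against the FBI parameter $\mu$ so that all error terms collapse into the single factor $e^{-\mathcal F(\overline{t})}$ of \eqref{stimabasso}; the remaining estimates are routine.
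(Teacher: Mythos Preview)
Your overall strategy---FBI transform, propagate smallness of $U$ from $\widetilde{y}_0$ toward a point near $\Gamma^{(a)}$ by iterated three--sphere inequalities, then compare with the lower bound coming from $|\psi|$---is exactly the paper's. The differences are in execution, and the paper's route is considerably simpler.

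First, since $t^*\le t_1$, one may apply Theorem~\ref{boundary-Colombini} with $T=t_1$ (not $T=\overline{t}$) to bound $\|u(\cdot,t^*)\|_{C^{1}}$ by $C t_1\rho_0^{-1}H(t_1)$. Thus the interior point can be placed at depth $\overline{\delta}\,\overline{\varrho}$ with $\overline{\delta}=\min\{1/4,\rho_0/(2C_0t_1F)\}$ \emph{independent of} $\overline{t}$ and $H(\overline{t})$. Your choice $r^*\sim\rho_0 H(t_1)/\big(F(\overline{t}\rho_0^{-1})H(\overline{t})\big)$ is unnecessarily small.

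Second, because the target depth is fixed, the paper does \emph{not} refine the chain to small scale. It propagates at constant scale $\overline{\varrho}$ from $\widetilde{y}_0$ to $\widetilde{x}_1$ with $x_1=x_0-\overline{\varrho}\,\nu(x_0)$; the number of balls, hence the compounded exponent $\vartheta_1^*$, depends only on $E,M,\lambda,\Lambda,\overline{\varrho}\rho_0^{-1}$. Then a \emph{single} application of the three--sphere inequality with radii $\overline{\varrho}/4,\ \overline{\varrho}(1-2\overline{\delta}),\ \overline{\varrho}$ (so $\vartheta_2^*$ depends additionally on $F$, but still not on $\overline{t}$) reaches $x_{\overline{\delta}}$. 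The sharp form of Theorem~\ref{new three sphere} is \emph{not} needed here---that refinement is reserved for Step~3.

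Third, the optimal $\mu$ is not polynomial in the data as you suggest; the paper sets
\[
\mu=\frac{10}{\overline{t}^{\,2}}\left|\log\!\left(\frac{\eta}{\overline{t}\rho_0^{-1}H(\overline{t})+2\eta}\right)\right|,
\]
which balances $(\mu\overline{t}^{\,2})^{-1/2}$ against $e^{\mu\overline{\varrho}^2/2}(\cdots)^{\vartheta_2^*}$ and yields directly
\[
\|\psi\|_{L^\infty(\Gamma^{(a)}\times[0,t_1])}\le C(\overline{t}\rho_0^{-1})^3H(\overline{t})\left|\log\!\left(\frac{\eta}{C\overline{t}\rho_0^{-1}H(\overline{t})}\right)\right|^{-1/2}.
\]
Inverting this, with $\|\psi\|_{L^\infty}\ge H(t_1)/F$ from \eqref{EFFEapb}, gives \eqref{stimabasso}--\eqref{F-CALL} immediately; the exponent $2$ in $\mathcal{F}$ is simply the square coming from the $|\log|^{-1/2}$, and the cube $(\overline{t}\rho_0^{-1})^3$ is the accumulated prefactor, not an artifact of a long chain. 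Your variant with $N\sim F(\overline{t}\rho_0^{-1})H(\overline{t})/H(t_1)$ could in principle be pushed through, but it entangles the chain length with the FBI parameter and makes the final optimization (which you leave unspecified) substantially harder to track.
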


\bigskip

\begin{proof}
For any number $\overline{t}$ such that $\overline{t}\geq 2t_1$ let us denote
\begin{equation}
\label{eta-45r}
\eta=\sup_{t\in[0,\overline{t}]}\left\Vert u(\cdot,t)\right\Vert_{L^2(B_{\overline{\varrho}}(y_0))}.
\end{equation}
Let $\left(x_0,\overline{\tau}\right)\in\Gamma^{(a)}\times[0,t_1]$ be such that
\begin{equation}
\label{maxpsi}
\left\vert\psi \left(x_0,\overline{\tau}\right)\right\vert=\left\Vert\psi\right\Vert_{L^{\infty}\left(\Gamma^{(a)}\times[0,t_1]\right)}.
\end{equation}
Let $\delta\in\left(0,\frac{1}{4}\right]$ be a number that we will choose later and let $x_{\delta}=x_0-4\delta\overline{\varrho}\nu(x_0)$. By Proposition \ref{boundary-Colombini} and by \eqref{maxpsi} we have

\begin{gather}
\label{1-59r}
\left\Vert\psi\right\Vert_{L^{\infty}\left(\Gamma^{(a)}\times[0,t_1]\right)}\leq \left\vert u \left(x_0,\overline{\tau}\right)-u\left(x_{\delta},\overline{\tau}\right)\right\vert+\left\vert u\left(x_{\delta},\overline{\tau}\right)\right\vert\leq \\ \nonumber
\leq C_0t_1\overline{\varrho}\rho_0^{-2}H(t_1)\delta+\left\vert u\left(x_{\delta},\overline{\tau}\right)\right\vert
\end{gather}
where $C_0$ depends on $E, M, \lambda$ and $\Lambda$ only.
Now let us choose
\begin{equation*}
\label{deltabar}
\overline{\delta}=\min\left\{\frac{1}{4},\frac{\rho_0}{2C_0t_1F}\right\}
\end{equation*}
and by \eqref{1-59r} we have

\begin{gather}
\label{1-59r-0}
\frac{1}{2}\left\Vert\psi\right\Vert_{L^{\infty}\left(\Gamma^{(a)}\times[0,t_1]\right)}\leq\left\vert u\left(x^{(\overline{\delta})},\overline{\tau}\right)\right\vert.
\end{gather}

Now we estimate from above in terms of $\eta$ the right hand side of \eqref{1-59r-0}. In order to get such an estimate we proceed similarly to Proposition \ref{prop-20r}. For any positive number $\mu$ such that $\mu \overline{t}^2\geq 1$ and $\tau\in(0,\overline{t}/2]$ denote by $U^{(\tau)}_{\mu}$ the FBI transform of $u$ defined by

\begin{equation}
\label{1-46r}
U^{(\tau)}_{\mu}(x,y):=\sqrt{\frac{\mu}{2\pi}}\int^{\overline{t}}_0e^{-\frac{\mu}{2}(iy+\tau-t)^2}u(x,t)dt, \quad \hbox{for every } (x,y)\in \Omega\times\mathbb{R}.
\end{equation}

Denote by $x_1=x_0-\overline{\varrho}\nu(x_0)$ where $\nu(x_0)$ is the exterior unit normal to $\partial\Omega$ in $x_0$.
Since
\begin{equation*}
\label{Umu-46r}
\left\{\begin{array}{ll}
\partial^2_{y}U^{(\tau)}_{\mu}+\mbox{div}\left(A(x)\nabla U^{(\tau)}_{\mu}\right)=f^{(\tau)}_{\mu}(x,y), \quad \hbox{in } \Omega\times\mathbb{R},\\[4mm]
\int_{B_{\overline{\varrho}}(y_0)} \left\vert U^{(\tau)}_{\mu}(x,y)\right\vert^2dx\leq c\mu^{1/2}\overline{t}\rho_0^{n}e^{\mu y^2}\eta^2,
\end{array}\right.
\end{equation*}
by arguing as in Proposition \ref{prop-20r} we get

\begin{gather}
\label{0-48r}
\left\Vert U^{(\tau)}_{\mu}\right\Vert_{L^{2}(\widetilde{B}_{\overline{\varrho}/4}(\widetilde{x}_1))}\leq C\overline{t}\rho_0^{-1}H(\overline{t})e^{\mu \overline{\varrho}^2/2}\times\\ \nonumber \times\left(\frac{\left\Vert U^{(\tau)}_{\mu}\right\Vert_{L^{2}(\widetilde{B}_{\overline{\varrho}/4}(\widetilde{y}_0))}e^{-\mu \overline{\varrho}^2/2}}{\overline{t}\rho_0^{-1}(H(\overline{t})+1)}+e^{-\mu \overline{t}^2/10}\right)^{\vartheta^{\ast}_{1}},
\end{gather}
and
\begin{gather}
\label{1-48r}
\left\Vert U^{(\tau)}_{\mu}\right\Vert_{L^{2}(\widetilde{B}_{\overline{\varrho}/4}(\widetilde{x}_1))}\leq C(\overline{t}\rho_0^{-1}H(\overline{t})+2\eta)e^{\mu \overline{\varrho}^2/2}\left((\mu \overline{t}^2)^{3/2}e^{-\mu \overline{t}^2/8}+\eta_1\right)^{\vartheta^{\ast}_{1}},
\end{gather}
where
\begin{equation}
\label{eta-3DICEM}
\eta_1=\frac{(\mu \overline{t}^2)^{1/4}\eta}{H(\overline{t})\overline{t}\rho_0^{-1}+2\eta},
\end{equation}
$\vartheta^{\ast}_{1}\in(0,1)$ depends on $E, M, \lambda, \Lambda$ and $\overline{\varrho}$ only and $C$ depends on $E, M, \lambda$ and $\Lambda$ only.

By \eqref{1-s15}, \eqref{2-59}, \eqref{1-63}, \eqref{1-59r-0} and by applying \cite[Theorem 8.17]{GT} we have
\begin{gather}
\label{1-59r-1}
\frac{1}{2}\left\Vert\psi\right\Vert_{L^{\infty}\left(\Gamma^{(a)}\times[0,t_1]\right)}\leq
\left\vert u\left(x_{\overline{\delta}},\overline{\tau}\right)-U^{(\overline{\tau})}_{\mu}\left(x_{\overline{\delta}},0\right)\right\vert+\left\vert U^{(\overline{\tau})}_{\mu}\left(x_{\overline{\delta}},0\right)\right\vert \leq
 \\ \nonumber
\leq C(\mu \overline{t}^2)^{-1/2}(\overline{t}\rho_0^{-1})^2\overline{t}H(\overline{t})+\left\Vert U^{(\overline{\tau})}_{\mu}\right\Vert_{L^{\infty}(\widetilde{B}_{\overline{\varrho}(1-3\overline{\delta})}(\widetilde{x}_1))}\leq \\ \nonumber
\leq C\left((\mu \overline{t}^2)^{-1/2}+e^{\mu(\overline{\varrho}^2/2-\overline{t}^2/10)}\right)(\overline{t}\rho_0^{-1})^3H(\overline{t})+\frac{C'}{\overline{\delta}^{(n+1)/2}}
\left\Vert U^{(\overline{\tau})}_{\mu}\right\Vert_{L^{2}(\widetilde{B}_{\overline{\varrho}(1-2\overline{\delta})}(\widetilde{x}_1))},
\end{gather}
where $C$ depends on $E, M, \lambda$ and $\Lambda$ only and where $C'$ depends on $\lambda$ only.

Now let us apply the three sphere inequality \eqref{1-58r} with $r_1=\frac{\overline{\varrho}}{4}$, $r_2=\overline{\varrho}(1-2\overline{\delta})$ and $r_3=\overline{\varrho}$. By \eqref{1-63} and \eqref{1-48r} we have
\begin{equation}
\label{3sphere-62r}
\left\Vert U^{(\overline{\tau})}_{\mu}\right\Vert_{L^{2}(\widetilde{B}_{\overline{\varrho}(1-2\overline{\delta})}(\widetilde{x}_1))}\leq C\left((\overline{t}\rho_0^{-1})^3H(\overline{t})+2\eta\right)e^{\mu \overline{\varrho}^2/2}\left(e^{-\mu \overline{t}^2/10}+\eta_1\right)^{\vartheta^{\ast}_{2}}
\end{equation}
where $\vartheta^{\ast}_{2}$, $\vartheta^{\ast}_{2}\in(0,1)$, and $C$ depend on $E, M, \lambda,\Lambda,\overline{\varrho}\rho_0^{-1}$ and $F$ only.

By \eqref{1-59r-1} and \eqref{3sphere-62r} we have
\begin{gather}
\label{62r-1}
\left\Vert\psi\right\Vert_{L^{\infty}\left(\Gamma^{(a)}\times[0,t_1]\right)}\leq
 C(\overline{t}\rho_0^{-1}H(\overline{t})+2\eta)\left((\mu \overline{t}^2)^{-1/2}+e^{\mu \overline{\varrho}^2/2}\left(e^{-\mu \overline{t}^2/10}+\eta_1\right)^{\vartheta^{\ast}_{2}}\right),
\end{gather}
where $C$ depends on $E, M, \lambda,\Lambda,\overline{\varrho}$ and $F$ only.
Now if $\overline{t}\geq\max\left\{\sqrt{10}(\vartheta^{\ast}_{2})^{-1/2}\overline{\varrho},2\rho_0, 2t_1\right\}$ then \eqref{eta-3DICEM} and \eqref{62r-1} give

\begin{gather}
\label{62r-3}
\left\Vert\psi\right\Vert_{L^{\infty}\left(\Gamma^{(a)}\times[0,t_1]\right)}
\leq C\left((\overline{t}\rho_0^{-1})^3H(\overline{t})+2\eta\right)\times \\ \nonumber
\times\left((\mu \overline{t}^2)^{-1/2}+(\mu \overline{t}^2)^{\vartheta^{\ast}_{2}/4}e^{\mu \vartheta^{\ast}_{2}\overline{t}^2/20}\left(\frac{\eta}{\overline{t}\rho_0^{-1}H(\overline{t})+2\eta}\right)^{\vartheta^{\ast}_{2}}\right),
\end{gather}
where $C$ depends on $E, M, \lambda,\Lambda,\overline{\varrho}\rho_0^{-1}$ and $F$ only.

Now let us choose
$$\mu=\frac{10}{\overline{t}^2}\left\vert\log\left(\frac{\eta}{\overline{t}\rho_0^{-1}H(\overline{t})+2\eta}\right)\right\vert$$
and by \eqref{62r-3},taking into account that $\eta\leq CH(\overline{t})$, we get
\begin{gather}
\label{62r-4}
\left\Vert\psi\right\Vert_{L^{\infty}\left(\Gamma^{(a)}\times[0,t_1]\right)}
\leq C(\overline{t}\rho_0^{-1})^2H(\overline{t})\left\vert\log\left(\frac{\eta}{CT_1\rho_0^{-1}H(\overline{t})}\right)\right\vert^{-1/2},
\end{gather}
where $C$ depends on $E, M, \lambda,\Lambda,\overline{\varrho}\rho_0^{-1}$ and $F$ only. By \eqref{62r-4} the thesis follows.
\end{proof}

Now we recall the following Lemma that was proved in \cite[Lemma 8.1]{A-B-R-V}.
\begin{lem}[\textbf{relative graphs}]
\label{Pr4.8}
Let $\Omega_1$ and $\Omega_2$ be bounded domains in $\mathbb{R}^n$ of class $C^{1,1}$ with constants $\rho_0$, $E$ and
satisfying $\left\vert\Omega_j\right\vert\leq M\rho_0^n$, $j=1,2$. There exist
numbers $d_0$, $\overline{\rho}_0\in\left(0,\rho_0\right] $ such that $\frac{
d_0}{\overline{\rho}_0}$ and $\frac{\overline{\rho}_0}{\rho_0}$ depend on $E$
only, and such that if we have
\begin{equation}
\label{4.360}
d_{\mathcal{H}}\left(\overline{\Omega}_1,\overline{\Omega}_2\right)
\leq d_{0},
\end{equation}
then the following facts hold true

\noindent i) $\Omega_1$ and $\Omega_2$ are relative graphs and

\begin{equation}
\label{4.361-a}
\gamma_0 \left(\Omega_1,\Omega_2\right) \leq Cd_{\mathcal{H}}\left(
\overline{\Omega}_1,\overline{\Omega}_2\right),
\end{equation}
where $C$ depends $E$ only,
\begin{equation}
\label{4.361-b}
\gamma_{1,\alpha}\left(\Omega_1,\Omega_2\right) \leq C\rho_0^{1+\alpha \over 2}
\left(d_{\cal H}(\overline{\Omega}_1,\overline{\Omega}_2)\right)
^{1-\alpha \over 2},\quad\hbox{for every }
\alpha\in(0,1),
\end{equation}
where $C$ depends $E$ and $\alpha$ only,

\noindent ii) there exists an absolute positive constant $c$ such that
\begin{equation}
 \label{4.365}
d_{\mathcal{H}}\left(\overline{\Omega}_1,\overline{\Omega}_2\right)
\leq cd_{m}\left(\overline{\Omega}_1,\overline{\Omega}_2\right),
\end{equation}
iii) $\Omega_1\cap\Omega_2$ is a domain of Lipschitz class with constants $\overline{\rho}_0$, $L$, where $\overline{\rho}_0$ is as
above and $L>0$ depends on $E$ only.
\end{lem}

\bigskip

\begin{prop}
\label{stimaforte}
There exist constants $C_F$ and $C$ depending on $E, M, \lambda,\Lambda$ and $F$ only and on $E, M, \lambda$ $\Lambda$ only respectively, such that if $t_0\geq t_{\ast}+\lambda\rho_0$ where $t_{\ast}$ is introduced in Proposition \ref{stima-dal-basso} and if
\begin{equation}
\label{3-34r}
\sup_{t\in[0,t_0]}\left(\rho_0^{-n}\int_{\Omega_j\setminus G}u^2_j(x,t)dx\right)\leq \eta^2
\end{equation}
then
\begin{equation}
\label{2-35r}
d_{\mathcal{H}}\left(\overline{\Omega}_1,\overline{\Omega}_2\right)\leq C\rho_0\left(\frac{\eta}{\overline{t}_0\rho_0^{-1}H(\overline{t}_0)}\right)^{1/K_0},
\end{equation}
where
\begin{equation}
\label{1-38r}
K_0=e^{C\mathcal{F}(\overline{t}_0)}.
\end{equation}
$$\overline{t}_0=t_0-\lambda\rho_0$$
and $\mathcal{F}(\overline{t}_0)$ is defined by \eqref{F-CALL}.
\end{prop}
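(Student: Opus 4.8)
The plan is to combine the lower bound from Proposition \ref{stima-dal-basso} with a quantitative propagation of smallness inside $G$, culminating in a boundary version of the strong unique continuation estimate (Theorem \ref{5-115Boundary}) applied near a point of $\partial\Omega_1\cup\partial\Omega_2$ where the two boundaries are furthest apart. First I would choose, inside $G_{\lambda\rho_0}$, a ball $B_{\overline{\varrho}}(y_0)\subset G$ with $\overline{\varrho}$ comparable to $\rho_0/E$, so that Proposition \ref{stima-dal-basso} gives a lower bound
\[
\overline{t}_0\rho_0^{-1}H(\overline{t}_0)e^{-\mathcal{F}(\overline{t}_0)}\leq\sup_{t\in[0,\overline{t}_0]}\|u_1(\cdot,t)\|_{L^2(B_{\overline{\varrho}}(y_0))},
\]
where $\overline{t}_0=t_0-\lambda\rho_0$; the same holds with $u_2$. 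Since $u_1=u_2$ on $\Gamma^{(a)}$ and they solve the same equation on $G$, the difference $w=u_1-u_2$ solves the wave equation in $G\times[0,\overline{t}_0]$ with zero Cauchy data on $\Sigma\times[0,\overline{t}_0]$; the hypothesis \eqref{3-34r} controls $w$ (through $u_1$ and $u_2$) on $(\Omega_j\setminus G)\times[0,t_0]$, hence on the part of $\partial G$ facing the unknown boundaries.

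Next I would translate this into an $L^2$ smallness of $u_1$ (say) on a ball centered near the point $Q\in\partial\Omega_2$ realizing (up to Lemma \ref{Pr4.8}(ii)) the modified distance $d_m\cong d_{\mathcal H}(\overline\Omega_1,\overline\Omega_2)$. The geometry here is exactly that of Lemma \ref{Pr4.8}: for $d_{\mathcal H}$ small, $\Omega_1$ and $\Omega_2$ are relative graphs, $\Omega_1\cap\Omega_2$ is Lipschitz with controlled constants, and a ball of radius $\sim d_{\mathcal H}$ sits on the correct side. On that small ball $u_1$ vanishes on the Dirichlet datum portion coming from $\partial\Omega_2$ (since $u_2=\psi=0$ there by the soft-obstacle condition), so the boundary doubling/strong-unique-continuation estimate of Theorem \ref{5-115Boundary}, applied at time $0$ after passing through the FBI transform as in Proposition \ref{prop-20r} (to trade the hyperbolic problem for the elliptic one with a small forcing $f_\mu$ for $\mu,T$ large), yields that the local $L^2$ norm of $u_1$ near the boundary point is bounded below by a positive power — with exponent of order $(\log(1/\text{(small quantity)}))^{-1/6}$ times $(\rho_0/d_{\mathcal H})^{-C}$ — of its $L^2$ norm on $B_{\overline\varrho}(y_0)$. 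Chaining this with the three sphere inequality (Theorem \ref{new three sphere}) along a Harnack chain of $O(M\sigma^{-n})$ balls joining $y_0$ to $Q$ inside $G$ (exactly the iteration \eqref{ricorsive}–\eqref{alphaN}) propagates the smallness from the boundary ball to $B_{\overline\varrho}(y_0)$.

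Putting the two chains together: the lower bound forces $\sup_t\|u_1(\cdot,t)\|_{L^2(B_{\overline\varrho}(y_0))}$ to be at least $\overline{t}_0\rho_0^{-1}H(\overline{t}_0)e^{-\mathcal F(\overline{t}_0)}$, while the propagation bounds this same quantity above by $(\text{controlled constant})\cdot\eta^{\,\vartheta}$ with $\vartheta=\vartheta(d_{\mathcal H})$ a power that degenerates like $e^{-C\mathcal F(\overline t_0)}$ as the various radii shrink; solving the resulting inequality for $d_{\mathcal H}$ produces \eqref{2-35r} with $K_0=e^{C\mathcal F(\overline t_0)}$ as in \eqref{1-38r}. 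The main obstacle, and the step requiring the most care, is the boundary unique-continuation step: one must arrange the geometry (via Lemma \ref{Pr4.8}) so that Theorem \ref{5-115Boundary} applies with $r_0\sim d_{\mathcal H}$ and $\rho\sim\rho_0$, keep careful track of how the factor $(\rho_0/d_{\mathcal H})^{C}$ and the $(\log)^{-1/6}$ enter, and verify that, when combined through the Harnack chain with the three sphere constants (which themselves depend on $d_{\mathcal H}$ through $\delta$ in \eqref{delta-58r}), all constants and exponents still depend only on $E,M,\lambda,\Lambda,F$ as claimed; the dependence of $K_0$ on $\mathcal F(\overline t_0)$ must come out exactly as in \eqref{F-CALL}–\eqref{1-38r} so that the third step of the proof of the Main Theorem can optimize in $\overline t_0$.
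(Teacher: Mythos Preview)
Your overall strategy—combine the lower bound of Proposition \ref{stima-dal-basso} with a quantitative unique continuation step near the point where $d_m$ is realized—is exactly the paper's strategy, but your proposed implementation contains two genuine misunderstandings that would prevent the argument from closing with the stated constants.

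First, Theorem \ref{5-115Boundary} is already a result for solutions of the \emph{wave} equation (see \eqref{4i-65Boundary}): it takes as input $\varepsilon_0=\sup_t\|u(\cdot,t)\|_{L^2(K_{r_0})}$ and outputs a bound on $\|u(\cdot,0)\|_{L^2(K_\rho)}$. No FBI transform is needed here, and introducing one would bring in the extra factors and the $\mu,T$-optimization that belong to Step~1, not to this proposition. Second, no Harnack chain is needed either. The paper picks $x_0\in\Gamma_1^{(i)}$ with $\mathrm{dist}(x_0,\overline{\Omega}_2)=d_m$; then $B_{d_m}(x_0)\cap\overline{\Omega}_2=\emptyset$, so $B_{d_m}(x_0)\cap\Omega_1\subset\Omega_1\setminus G$ and hypothesis \eqref{3-34r} gives smallness \emph{directly} on $K_{d_m}$. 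One application of Theorem \ref{5-115Boundary} with $r_0=d_m$ and $\rho=\overline{s}_0\rho_0/2$ then bounds $\|u_1(\cdot,t)\|_{L^2(K_{\overline{s}_0\rho_0/2})}$; the reference ball $B_{\overline\varrho}(y_0)$ is chosen with $y_0=x_0-s^*\rho_0\nu(x_0)$ \emph{inside} this set, so Proposition \ref{stima-dal-basso} applies without any propagation. Solving the inequality $e^{-\mathcal F(\overline t_0)}\le C\bigl(\theta_1\log(\rho_0^{-1}t_0H(t_0)/\eta)\bigr)^{-1/6}$ with $\theta_1=(C\log(\rho_0/d_m))^{-1}$ is what produces $K_0=e^{C\mathcal F(\overline t_0)}$; it does not come from degeneration of three-sphere exponents. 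Your chain of $O(M\sigma^{-n})$ balls would instead give an exponent $\vartheta_\ast^{M\sigma^{-n}}$ depending on $d_{\mathcal H}$, and would not yield \eqref{2-35r} as written.

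A smaller point: you invoke $d_{\mathcal H}\le c\,d_m$ via Lemma \ref{Pr4.8}(ii), but that lemma requires the a priori hypothesis $d_{\mathcal H}\le d_0$. The paper handles the passage from $d_m$ to $d_{\mathcal H}$ by a separate three-case analysis (using Theorem \ref{5-115} in the interior when the extremal point lies well inside $\Omega_1$), and you should plan for that.
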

\begin{proof}

First we prove the following inequality
\begin{equation}
\label{1-39r}
d_{m}\left(\overline{\Omega}_1,\overline{\Omega}_2\right)\leq C\rho_0\left(\frac{\eta}{\overline{t}_0\rho_0^{-1}H(\overline{t}_0)}\right)^{1/K_0},
\end{equation}
where $d_m(\overline{\Omega}_1,\overline{\Omega}_2)$ is the quantity introduced in Definition \ref{Def4.7}.

For the sake of brevity let us denote $d_m=d_{m}\left(\overline{\Omega}_1,\overline{\Omega}_2\right)$
Let us assume, with no loss of generality, that there exists
$x_0\in \Gamma^{(i)}_1\subset \partial\Omega_1$ such that
$\hbox{dist}(x_0,\Omega_2)=d_m$.

By \eqref{3-34r} we have trivially

\begin{equation}
\label{2-35r-1}
\sup_{t\in[0,t_0]}\left(\rho_0^{-n}\int_{\Omega_1\cap B_{d_m}(x_0)}u^2_1(x,t)dx\right)\leq \eta^2
\end{equation}
let us distinguish the following two cases

i) $d_m\leq \frac{1}{2}\overline{s}_0\rho_0$,

ii) $d_m> \frac{1}{2}\overline{s}_0\rho_0$,

\noindent where $\overline{s}_0$, $\overline{s}_0\in(0,1)$, is defined in Theorem \ref{5-115Boundary} and depends on $E, \lambda,$ and $\Lambda$ only.

In case i), by applying Theorem \ref{5-115Boundary} with $r_0=d_m$ and $\rho=\frac{\overline{s}_0\rho_0}{2}$ we have

\begin{gather}
\label{2-37r}
\sup_{t\in[0,t_0-\lambda\rho_0]}\left\Vert u_1(\cdot,t) \right\Vert_{L^2\left(B_{\overline{s}_0\rho_0/2}(x_0)\cap\Omega_1\right)} \leq \\ \nonumber
 \leq C\left(\rho_0^{-1}t_0H(t_0)\right)\left(\theta_1\log \left(\frac{\rho_0^{-1}t_0H(t_0)}{\eta}\right)\right)^{-1/6},
\end{gather}
where
\begin{equation}
\label{theta1}
\theta_1=\frac{1}{C\log (\rho_0/d_m)}.
\end{equation}
and $C$ depends on $E, M,\lambda$ and $\Lambda$ only.

Now let us introduce the following notation: $s^{\ast}=\min\left\{\frac{\overline{s}_0}{4},\frac{1}{2E}\right\}$ and $y_0=x_0-s^{\ast}\rho_0\nu(x_0)$, $\overline{t}_0=t_0-\lambda\rho_0$. We have $B_{s^{\ast}\rho_0/2}(y_0)\subset B_{s_0\rho_0/2}(x_0)\cap\Omega_1$. Let us assume that $t_0\geq\max\{2C_{F}\rho_0,2t_1\}$ where $C_{F}$ is defined in Proposition \ref{stima-dal-basso}. By \eqref{2-37r} and Proposition \ref{stima-dal-basso} we have

\begin{gather}
\label{2-37r-1}
\overline{t}_0\rho_0^{-1}H(\overline{t}_0)e^{-\mathcal{F}(\overline{t}_0)}\leq \\ \nonumber
\leq C\left(\rho_0^{-1}t_0H(t_0)\right)\left(\theta_1\log \left(\frac{\rho_0^{-1}t_0H(t_0)}{\eta}\right)\right)^{-1/6},
\end{gather}
where $C$ depends on $E, M,\lambda$ and $\Lambda$ only and $\mathcal{F}(\overline{t}_0)$ is defined by \eqref{F-CALL}.

By \eqref{theta1} and \eqref{2-37r-1} it is easy to get
\begin{equation}
\label{2-38r}
d_m\leq \rho_0\left(\frac{\eta}{\rho_0^{-1}t_0H(t_0)}\right)^{1/K_0},
\end{equation}
where $K_0$ is defined in \eqref{1-38r}.

Consider now case ii). Since we have $B_{s^{\ast}\rho_0/2}(y_0)\subset B_{s_0\rho_0/2}(x_0)\cap\Omega_1$ we get
\begin{gather}
\label{star-36r}
\overline{t}_0\rho_0^{-1}H(\overline{t}_0)e^{-\mathcal{F}(\overline{t}_0)}\leq \sup_{t\in[0,t_0-\lambda\rho_0]}\left\Vert u_1(\cdot,t) \right\Vert_{L^2\left(B_{\vartheta^{\ast}_{1}\rho_0/2}(y_0)\right)}\leq \\ \nonumber
\leq\sup_{t\in[0,t_0-\lambda\rho_0]}\left\Vert u_1(\cdot,t) \right\Vert_{L^2\left(B_{s_0\rho_0/2}(x_0)\cap\Omega_1\right)}\leq \eta.
\end{gather}
Hence
\begin{equation}
\label{trivially}
1\leq \frac{e^{\mathcal{F}(\overline{t}_0)}\eta}{\overline{t}_0\rho_0^{-1}H(\overline{t}_0)}.
\end{equation}
Now by a priori information we have $d_m\leq C\rho_0$ where $C$ depends on $E$ and $M$ only. Therefore by \eqref{trivially} we have trivially

\begin{equation}
\label{1-36r}
d_m\leq C\rho_0\leq C\rho_0\left(\frac{e^{\mathcal{F}(\overline{t}_0)}\eta}{\overline{t}_0\rho_0^{-1}H(\overline{t}_0)}\right)^{1/K_0}.
\end{equation}
Therefore in both the cases we have \eqref{1-39r}.

Now we prove \eqref{2-35r}. Let us denote by $d=d_{\mathcal{H}}\left(\overline{\Omega}_1,\overline{\Omega}_2\right)$.
With no loss of generality, let $\overline{y}\in\overline{\Omega_1}
\setminus\overline{\Omega_2}$ be such that $\hbox{dist}
(\overline{y}, \overline{\Omega_2})=d$.
Since in general $\overline{y}$ needs not to belong to
$\partial\Omega_1$, \cite{A-B-R-V} it is necessary to analyze various different cases
separately. Denoting by $h=\hbox{dist}(\overline{y},\partial\Omega_1)$, let us
distinguish the following three cases:

 i) $h\leq{d \over 2}$,

ii) $h>{d \over 2}$, $h>{d_0 \over 2}$,

iii) $h>{d \over 2}$, $h\leq{d_0 \over 2}$,

\noindent where $d_0$ is the number introduced in Proposition \ref{Pr4.8}.

If case i) occurs, taking $\overline{z}\in\partial\Omega_1$ such that
$|\overline{y}-\overline{z}|=h$, we have that $\hbox{dist}(\overline{z},\overline{
\Omega}_2)\geq d-h\geq{d \over 2}$, so that
$d\leq 2d_m$ and \eqref{2-35r} follows by \eqref{1-39r}.

Let us now consider case ii). Let us denote
\begin{equation}
\label{3-40r}
d_1=\min\left\{{d \over 2},{s_0d_0 \over 4}\right\}.
\end{equation}
where $s_0$, $s_0\in(0,1)$, is defined in Theorem \ref{5-115} and depends on $\lambda$ and $\Lambda$ only. We have that

\begin{equation}
\label{3-40r}
B_{d_0/2}(\overline{y})\subset\Omega_1 \quad \mbox{ and } B_{d_1}(\overline{y})\subset\Omega_1\setminus\overline{\Omega}_2.
\end{equation}
Now by applying Theorem \ref{5-115} with $r_0=d_1$ and $\rho=\frac{s_0\rho_0}{2}$ we have

\begin{gather*}
\sup_{t\in[0,t_0-\lambda\rho_0]}\left\Vert u_1(\cdot,t) \right\Vert_{L^2\left(B_{d_1}(\overline{y})\right)} \leq \\ \nonumber
 \leq C\left(\rho_0^{-1}t_0H(t_0)\right)\left(\theta_2\log \left(\frac{\rho_0^{-1}t_0H(t_0)}{\eta}\right)\right)^{-1/6},
\end{gather*}
where
\begin{equation*}
\label{theta1}
\theta_2=\frac{1}{C\log (\rho_0/d_m)},
\end{equation*}
and $C$ depends on $\lambda$ and $\Lambda$ only.

Now proceeding exactly as in the proof of \eqref{2-38r} we have
\begin{equation}
\label{1-42r}
d_1\leq \rho_0\left(\frac{\eta}{\rho_0^{-1}t_0H(t_0)}\right)^{1/K_0},
\end{equation}
where $K_0$ is defined by \eqref{1-38r} (perhaps with a different value of constant C).

Now, if

$$\rho_0\left(\frac{\eta}{\rho_0^{-1}t_0H(t_0)}\right)^{1/K_0}<{s_0d_0 \over 4}$$
then by \eqref{1-42r} we have $d_1<{s_0d_0 \over 4}$, hence $d_1={d \over 2}$. Therefore we get
\begin{equation}
\label{1-43r}
d=2d_1\leq 2\rho_0\left(\frac{\eta}{\rho_0^{-1}t_0H(t_0)}\right)^{1/K_0}.
\end{equation}
if, instead, we have
$$\rho_0\left(\frac{\eta}{\rho_0^{-1}t_0H(t_0)}\right)^{1/K_0}\geq {s_0d_0 \over 4},$$
we have trivially

\begin{equation}
\label{1-44r}
d\leq C\rho_0\leq \frac{4C\rho_0^2}{s_0d_0}\left(\frac{\eta}{\rho_0^{-1}t_0H(t_0)}\right)^{1/K_0},
\end{equation}
where $C$ depends on $E$ and $M$ only.

If case iii) occurs we have in particular that $d<d_0$, hence by Proposition \ref{Pr4.8} we have $d\leq cd_m$ and by \eqref{1-39r} the thesis follows again.
\end{proof}

\begin{cor}\label{CorStimaforte}
Let $t_{\ast}$ be defined in Proposition \ref{stimaforte} and let $t_0\geq t_{\ast}$ be fixed. We have for every $\varepsilon\in(0,\overline{\varepsilon}]$, $\overline{\varepsilon}$ is defined in Proposition \ref{prop-20r},
\begin{equation}
\label{1-3f}
d_{\mathcal{H}}\left(\overline{\Omega}_1,\overline{\Omega}_2\right)\leq \rho_0\omega_1(\varepsilon,t_0),
\end{equation}
where
\begin{equation}
\label{omega-1}
\omega_1(\varepsilon,t_0):= C\left(\frac{\omega(\varepsilon,t_0)}{\overline{t}_0\rho_0^{-1}H(\overline{t}_0)}\right)^{1/K_0},
\end{equation}
$\omega(\varepsilon,t_0)$ is defined by \eqref{omega-enunciato-20r} and $C$ on $E, M, \lambda$, $\Lambda$ and $F$ only and $K_0$ is defined in \eqref{1-38r}.
\end{cor}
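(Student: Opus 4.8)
The Corollary is a direct assembly of Proposition \ref{prop-20r} (the Step~1 estimate) and Proposition \ref{stimaforte} (the Step~2 estimate): the two substantial estimates being already in hand, the plan is only to check that the hypotheses match once $t_0$ is fixed and to keep track of the constants.

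First I would fix $t_0\geq t_{\ast}$ (with $t_{\ast}$ as in Proposition \ref{stimaforte}, the statement being read with the shift $\overline{t}_0=t_0-\lambda\rho_0$) and, for $\varepsilon\in(0,\overline{\varepsilon}]$, take $T=T(\varepsilon)=T_{\sigma(\varepsilon)}$ as prescribed by \eqref{3-20r} and \eqref{3-epsilon20r}--\eqref{sigmaepsilon}. By the very definition \eqref{3-20r} of $T_\sigma$ one has $T(\varepsilon)\geq 2t_0$ for every such $\varepsilon$, so the solutions $u_j\in\mathcal{W}\left([0,T(\varepsilon)];\Omega_j\right)$ of \eqref{1-141} are in particular defined on $[0,t_0]$, and the Cauchy-data bound \eqref{3-141} on $[0,T(\varepsilon)]$ is exactly hypothesis \eqref{2-20r} of Proposition \ref{prop-20r}.

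Applying Proposition \ref{prop-20r} then gives, for $j=1,2$,
\[
\sup_{t\in[0,t_0]}\left(\rho_0^{-n}\int_{\Omega_j\setminus G}u_j^2(x,t)\,dx\right)\leq C\,\omega(\varepsilon,t_0),
\]
with $\omega(\varepsilon,t_0)$ as in \eqref{omega-enunciato-20r}. I would then set $\eta:=\bigl(C\,\omega(\varepsilon,t_0)\bigr)^{1/2}$, so that condition \eqref{3-34r} of Proposition \ref{stimaforte} holds with this $\eta$; since $t_0\geq t_{\ast}+\lambda\rho_0$, Proposition \ref{stimaforte} applies and yields
\[
d_{\mathcal{H}}\left(\overline{\Omega}_1,\overline{\Omega}_2\right)\leq C\rho_0\left(\frac{\bigl(C\,\omega(\varepsilon,t_0)\bigr)^{1/2}}{\overline{t}_0\rho_0^{-1}H(\overline{t}_0)}\right)^{1/K_0},
\]
with $K_0$ as in \eqref{1-38r} and $\overline{t}_0=t_0-\lambda\rho_0$. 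After absorbing the square root and the constants (which only changes the value of the constant $C$ in \eqref{1-38r} defining $K_0$, using also that $\overline{t}_0\rho_0^{-1}H(\overline{t}_0)$ is bounded below), this is \eqref{1-3f} with $\omega_1(\varepsilon,t_0)$ as in \eqref{omega-1}; the stated dependence of the constants on $E,M,\lambda,\Lambda,F$ and of $K_0$ on $t_0$ and on $H(\overline{t}_0)/H(t_1)$ is inherited verbatim from the two propositions through \eqref{F-CALL}.

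The only delicate point is the compatibility of the several lower thresholds on the time variable ($T(\varepsilon)\geq 2t_0$ from Proposition \ref{prop-20r}; $t_0\geq t_{\ast}+\lambda\rho_0$ and $t_0\geq\max\{2C_F\rho_0,2t_1\}$ from Proposition \ref{stimaforte} and its proof), together with the remark that the $\varepsilon$-dependence of $T(\varepsilon)$ is harmless here because $t_0$ is kept fixed and $T(\varepsilon)\geq 2t_0$ holds for all admissible $\varepsilon$ — this very $\varepsilon$-dependence being precisely what Step~3 will afterwards have to eliminate. Beyond this bookkeeping there is no genuine obstacle, the substantive analysis lying entirely in Propositions \ref{prop-20r} and \ref{stimaforte}.
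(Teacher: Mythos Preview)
Your proof is correct and follows exactly the paper's approach: the paper's own proof is the single sentence ``Inequality \eqref{1-3f} is an immediate consequence of Proposition \ref{prop-20r} and Proposition \ref{stimaforte}'', and you have carefully unpacked precisely that combination, including the bookkeeping on time thresholds and constants that the paper leaves implicit.
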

\begin{proof}
 Inequality \eqref{1-3f} is an immediate consequence of Proposition \ref{prop-20r} and Proposition \ref{stimaforte}
 \end{proof}

\subsection{Step 3} \label{step3}

Now we conclude the proof of the main Theorem.

Let $t_0\geq t_{\ast}$ be fixed and let $d_0$ be defined in Proposition \ref{Pr4.8} and let $s\in (0,\frac{d_0}{\rho_0}]$ be a number that we shall choose later. Denote by $$\epsilon(s)=\sup\left\{\varepsilon\in(0,\overline{\varepsilon}]:\omega_1(\varepsilon,t_0)\leq s \right\}.$$

By Proposition \ref{Pr4.8} we have that, for every $s\in (0,\frac{d_0}{\rho_0}]$ and every $\varepsilon\in(0,\epsilon(s)]$, $\partial\Omega_1$ and $\partial\Omega_2$ are relative graphs, moreover $G$ is equal to $\Omega_1\cap\Omega_2$ and is a domain of Lipschitz class with constants $CE$ and $\rho_0/C$ where $C\geq 1$ depends on $E$ only.

We have $$\partial\left(\Omega_1\setminus G\right)\subset\Gamma_1^{(i)}\cup\left(\Gamma_2^{(i)}\cap\partial G\right).$$
Denote by $u=u_1-u_2$. By Schwarz inequality, energy inequality, \eqref{1-s15a}, \eqref{1-s15b} and recalling that $u_2=0$ on $\Gamma_2^{(i)}$ we have, for any $t\in (0,t_0]$,

\begin{gather}
\label{OmG-15-1}
\rho_0^{-n}\int_{\Omega_1\setminus G}u_1^2(x,t)dx\leq t_0\rho_0^{-n}\int_0^{t_0}\int_{\Omega_1\setminus G}\left\vert \partial_\xi u_1(x,\xi)\right\vert^2 dxd\xi \leq \\ \nonumber
\leq C(t_0\rho_0^{-1})^{5/2}\left(H(t_0)\right)^{3/2}\left\Vert u\right\Vert_{L^{\infty}((\Gamma_2^{(i)}\cap\partial G)\times[0,t_0])}^{1/2},
\end{gather}
where $C$ depends on $\alpha, E, M, \lambda$ and $\Lambda$ only.

Let $P\in\partial G$, without restriction we may assume that $P\equiv 0$. By \eqref{1-3f} and Proposition \ref{Pr4.8} we have that if $s\in (0,\frac{d_0}{\rho_0}]$ and $\varepsilon\in(0,\epsilon(s)]$ then there exist $\varphi_{1},\varphi_{2}\in C^{1,1}\left(B_{r_0}^{\prime }\left(0\right)\right)$, where $\frac{r_0}{\rho_0}\leq 1$ depends on $E$  only, satisfying the following conditions

\begin{subequations}
\label{relgraph14-12}
\begin{equation}
\label{relgraph14-12-b}
\left \Vert\varphi_{i}\right \Vert_{C^{1,1}\left(
 B_{r_{0}}^{\prime }\left( 0\right) \right) }\leq E\rho_0,
\end{equation}
\begin{equation}
\label{relgraph-c}
\Omega_i\cap B_{r_{0}}\left(0\right) =\left\{x\in B_{r_{0}}\left(
0\right) :x_n>\varphi_{i}\left( x^{\prime }\right)\right\}\mbox{, } i=1,2.
\end{equation}
\end{subequations}
It is not restrictive to assume that
\begin{equation}
\label{relgraph-a}
\varphi _{1}\left( 0\right) =\left \vert \nabla_{x'}\varphi _{1}\left(
0\right) \right \vert=0 \quad\mbox{, }  \varphi _{2}(0)\leq 0.
\end{equation}

Now, Let us denote by $\varphi=\max\{\varphi _{1}, \varphi _{2}\}$ and by $d_1=\min\{d_0,r_0\}$ By \eqref{1-3f} and \eqref{4.361-b} we have (we fix $\alpha=1/2$), for every $s\in (0,\frac{d_1}{\rho_0}]$ and every $\varepsilon\in(0,\epsilon(s)]$,

\begin{equation}
\label{3-3f}
\left \Vert \nabla_{x'}\varphi\right\Vert_{L^{\infty}\left(B'_{s\rho_0}\right)}\leq L_{s}:=C_{\ast} s^{1/4}
\end{equation}
where $C_{\ast}$ depends on $E$ only.

For any $s\in (0,\frac{d_1}{\rho_0}]$ let us introduce the following notation
\begin{equation}
\label{4f-14-12-1}
T_s:=\max\left\{T(\epsilon(s)),2t_0\right\},
\end{equation}
where $T(\varepsilon)$ is defined in \eqref{3-epsilon20r},

\begin{equation}
\label{1-4f}
\gamma=\arctan \frac{1}{L_s}.
\end{equation}

\noindent Moreover let $\gamma_1,\gamma_2$ two numbers such that $0<\gamma_1<\gamma_2<\gamma<\frac{\pi}{2}$ that we shall choose later and let
\begin{subequations}
\label{5f}
\begin{equation}
\label{4-5f}
\chi=\frac{1-\sin\gamma_2}{1-\sin\gamma_1},
\end{equation}
\begin{equation}
\label{2-5f}
l_1=\frac{sL_s\rho_0/2}{1+\sin\gamma},
\end{equation}
\begin{equation}
\label{5-5f}
l_k=\chi^{k-1}l_1 \quad\mbox{,  } k\in \mathbb{N},
\end{equation}
\begin{equation}
\label{6f-1}
w_k=l_ke_n \quad\mbox{, } k\in \mathbb{N},
\end{equation}
\begin{equation}
\label{6f-1}
R_k=l_k\sin\gamma \mbox{, }\quad  \rho_k=l_k\sin\gamma_2 \mbox{, }\quad r_k=l_k\sin\gamma_1 \mbox{,  }\quad  k\in \mathbb{N}.
\end{equation}
\end{subequations}
It is easy to check that denoting by $\mathcal{C}$ the cone $$\mathcal{C}=\left\{x\in\mathbb{R}^n:L_s\left\vert x'\right\vert\leq x_n\leq \frac{sL_s\rho_0}{2}\right\}$$
we have

\begin{equation}
\label{6f-2}
B_{r_{k+1}}(w_{k+1})\subset B_{\rho_{k}}(w_{k})\subset B_{R_{k}}(w_{k})\subset \mathcal{C}\subset G \quad\mbox{, for every }  k\in \mathbb{N}
\end{equation}
and

\begin{equation}
\label{12f}
\mbox{dist}\left(B_{r_{1}}(w_{1}), \partial G\right)\geq \frac{1}{2}\rho_0s h,
\end{equation}
where

\begin{equation}
\label{1-13f}
h=\frac{\sin\gamma-\sin\gamma_1}{1+\sin\gamma}.
\end{equation}

Let $T\geq T_s$ be a number that we will choose. For any positive number $\mu$ such that $\mu T^2\geq 1$ and $\tau\in(0,T/2]$ denote by $U^{(\tau)}_{\mu}$ the FBI transform of $u$ defined by

\begin{equation}
\label{defFBI-u}
U^{(\tau)}_{\mu}(x,y)=\sqrt{\frac{\mu}{2\pi}}\int^{T}_0e^{-\frac{\mu}{2}(iy+\tau-t)^2}u(x,t)dt, \quad \hbox{for } (x,y)\in G\times\mathbb{R}.
\end{equation}

Let $\kappa_0\leq 1$ such that $G_r$ is connected for every $r\in(0,\kappa_0\rho_0]$ \cite{A-R-R-V}. Let $\kappa_1=\min\{\frac{d_1}{\rho_0},\kappa_0\}$.
Arguing as in Proposition \ref{prop-20r} we have by \eqref{3-141}, for every $s\in (0,\kappa_1]$ and every $\varepsilon\in(0,\epsilon(s)]$,

\begin{gather}
\label{2-2f}
\left\Vert U^{(\tau)}_{\mu}\right\Vert_{L^{2}\left(\widetilde{B}_{r_1}\left(\widetilde{w}_1\right)\right)}\leq \\ \nonumber \leq CT\rho_0^{-1}\left(H(T)+1\right)e^{\mu (s\rho_0)^2/2}\left(e^{2\mu\rho_0^2}\left(e^{-\mu T^2/10}+\varepsilon_1\right)^{\vartheta_2}\right)^{\vartheta_2^{(hs/2)^{-n}}},
\end{gather}
where $\vartheta_2\in(0,1)$ is the same exponent of inequality \eqref{1-14r}, $\vartheta_2, C$ depend on $E, M, \lambda$ and $\Lambda$ only and
\begin{equation}
\label{eta-11r-15}
\varepsilon_1=\frac{(\mu T^2)^{1/4}\varepsilon}{(H(T)+1)T\rho_0^{-1}}.
\end{equation}

\bigskip

Now we apply inequality \eqref{1-58r} when $\widetilde{r}_1=r_k, \widetilde{r}_2=\rho_k, \widetilde{r}_3=R_k $ and $x_0=w_k$, $k\in\mathbb{N}$.

Let us denote by

\begin{gather}
\label{10f-alfagammak}
\alpha_k=e^{-\mu T^2/10}+\frac{e^{-\mu R_k^2/2}
\left\Vert U^{(\tau)}_{\mu}\right\Vert_{L^{2}\left(\widetilde{B}_{r_k}(\widetilde{w}_k)\right)}}
{(H(T)+1)T\rho_0^{-1}}.
\end{gather}

Taking into account \eqref{6f-2} we have

\begin{equation}
\label{10f-alfagammak}
\alpha_{k+1}\leq \widetilde{C}_0e^{\frac{\mu}{2}\left(R_k^2-R_{k+1}^2\right)}\alpha_k^{\widetilde{\vartheta}_0} \mbox{, for every } k\in\mathbb{N},
\end{equation}
where

\begin{equation}
\label{varthetatilde}
\widetilde{\vartheta}_0=\frac{\rho_1^{-\beta_1}-\left[(1-\delta)R_1\right]^{-\beta_1}}
{\left[(1-2\delta)r_1\right]^{-\beta_1}-
\left[(1-\delta)R_1\right]^{-\beta_1}},
\end{equation}

\begin{equation}
\label{deltatilde}
0<\delta\leq\frac{R_1-\rho_1}{2R_1},
\end{equation}

\begin{equation}
\label{citilde}
\widetilde{C}_0=
C\frac{e^{C\left[(\rho_1R_1^{-1})^{-\beta_1}-(1-\delta)^{-\beta_1}\right]}}{\delta^{4}},
\end{equation}
$\beta_1$ has been introduced in Theorem \ref{new three sphere} and $C$ depends on $E, M, \lambda$ and $\Lambda$ only.

Notice that $$R_k^2-R_{k+1}^2=\chi^{2k}R_1^2(\chi^{-2}-1).$$
By iterating \eqref{10f-alfagammak} we get
\begin{equation}
\label{1-12f}
\alpha_{k+1}\leq (C\widetilde{C}_0)^{1/(1-\widetilde{\vartheta}_0)}\left(e^{\mu R_1^2 A_k/2}\alpha_1\right)^{\widetilde{\vartheta}_0^k} \mbox{, }\quad k\in\mathbb{N},
\end{equation}
where
\begin{equation}
\label{12f-1new}
A_k=(\chi^{-2}-1)(\chi^2\widetilde{\vartheta}_0^{-1})\frac{1-(\chi^2\widetilde{\vartheta}_0^{-1})^k}{1-(\chi^2\widetilde{\vartheta}_0^{-1})}
\mbox{, }\quad k\in\mathbb{N}.
\end{equation}

Let $\kappa_2=\min\{\kappa_1, 2\left(\left\vert\log_4\vartheta_2\right\vert\right)^{1/n}\}$ and taking into account that, by \eqref{1-13f}, $h\leq 1$, from \eqref{2-2f} and \eqref{1-12f} we get that for every $s\leq \kappa_2$
the following inequality holds true

\begin{gather}
\label{14f-1new}
\frac{\left\Vert U^{(\tau)}_{\mu}\right\Vert_{L^{2}\left(\widetilde{B}_{r_{k+1}}(\widetilde{w}_{k+1})\right)}}
{(H(T)+1)T\rho_0^{-1}}\leq \\ \nonumber
\leq (C\widetilde{C}_0)^{1/(1-\widetilde{\vartheta}_0)}\left(e^{\mu A^{(1)}_{s,k}}\varepsilon_1^{\vartheta_1\vartheta_2^{(sh/2)^{-n}}}+e^{\mu A^{(2)}_{s,k}}\right)^{\widetilde{\vartheta}_0^{k}}\mbox{, } k\in\mathbb{N},
\end{gather}
where
\begin{equation}
\label{B-1-sk}
A^{(1)}_{s,k}=\frac{1}{2}\left(A_k+(\chi^2\widetilde{\vartheta}_0^{-1})\right)R_1^2+\rho_0^2
\quad\mbox{, } k\in\mathbb{N},
\end{equation}
\begin{equation}
\label{B-2-sk}
A^{(2)}_{s,k}=A^{(1)}_{s,k}-\frac{1}{10}T^2 \vartheta_2^{1+(sh/2)^{-n}}
\mbox{, } k\in\mathbb{N},
\end{equation}
and $C$ depends on $E, M, \lambda$ and $\Lambda$ only.

Since we need that $A^{(1)}_{s,k}$ is bounded for $k\in \mathbb{N}$ we search for which $s\in(0,\kappa_2]$ we have
\begin{equation}
\label{3-14f}
\chi^2\widetilde{\vartheta}_0^{-1}<1,
\end{equation}

Let $\varsigma,a,b,q\in(0,1)$ three numbers that we will fix later on and let
\begin{equation}
\label{seni-16f}
\sin\gamma_1=1-\varsigma,\quad\sin\gamma_2=1-a\varsigma,\quad \sin\gamma=1-ab\varsigma
\end{equation}
and

\begin{equation}
\label{delta-16f}
\delta=q\left(\frac{R_1-\rho_1}{2R_1}\right)=\frac{q}{2}\frac{a(1-b)\varsigma}{1-ab\varsigma},
\end{equation}
by \eqref{5f} and \eqref{varthetatilde} we have respectively

\begin{equation}
\label{chi}
\chi=a,
\end{equation}
and

\begin{equation}
\label{1-16f}
\widetilde{\vartheta}_0=\frac{a(1-b)(1-q/2)}{1-ab+qa(1-b)/2}+o(1)\quad\mbox{, as } \varsigma\rightarrow 0.
\end{equation}

\bigskip

In order that \eqref{3-14f} is satisfied it is enough that

\begin{equation}
\label{19f}
a^2<\frac{a(1-b)(1-q/2)}{1-ab+qa(1-b)/2},
\end{equation}
for instance if we choose
\begin{equation}
\label{q-a-b}
q=\frac{1}{2}\mbox{ , }\quad a=\frac{1}{4} \mbox{, }\quad b=\frac{1}{3}
\end{equation}
then \eqref{19f} is satisfied and we have
\begin{equation}
\label{21f}
\chi^2\widetilde{\vartheta}_0^{-1}=\frac{23}{48}+o(1)\quad\mbox{, as } \varsigma\rightarrow 0.
\end{equation}
By \eqref{21f} we have that there exists $\varsigma_0>0$ such that if $0<\varsigma\leq\varsigma_0$ then
\begin{equation}
\label{21f-1}
\chi^2\widetilde{\vartheta}_0^{-1}\leq\frac{1}{2}.
\end{equation}
Let
\[\varsigma_1=\left(1-\left(1+C_{\ast}\kappa_2^{1/2}\right)^{-1/2}\right)^{1/2},\]
where $C_{\ast}$ is defined in \eqref{3-3f} and depends on $E$ only.

Now, let us fix $\varsigma=\overline{\varsigma}:=\min\left\{\varsigma_0,\varsigma_1,\frac{1}{4}\right\}$ and denote by $\overline{\gamma}_1,\overline{\gamma}_2,\overline{\gamma}$ the numbers belonging to $(0,\frac{\pi}{2})$ such that
\begin{equation}
\label{seni-16f-1}
\sin\overline{\gamma}_1=1-\overline{\varsigma},\quad\sin\overline{\gamma}_2=1-\frac{1}{4}\overline{\varsigma},\quad \sin\overline{\gamma}=1-\frac{1}{12}\overline{\varsigma}
\end{equation}
and denote by
\begin{equation}
\label{esse-varsigma}
\overline{s}=\frac{1}{C_{\ast}^4}\left(\left(1-\overline{\varsigma}/12\right)^{-4}-1\right).
\end{equation}
Notice that \eqref{3-3f}, \eqref{esse-varsigma} and the third equality of \eqref{seni-16f-1} imply that equality \eqref{1-4f} is satisfied. Namely we have
$$\overline{\gamma}=\arctan\frac{1}{L_{\overline{s}}}.$$

Now for any quantity $g$ introduced in \eqref{5f}, \eqref{1-13f}, \eqref{varthetatilde} and \eqref{delta-16f} we denote by $\overline{g}$ the value of such a quantity
when $s=\overline{s}$ or, equivalently, when $\varsigma=\overline{\varsigma}$. In particular we have

\begin{equation}
\label{delta-21f}
\overline{\delta}=\frac{\overline{\varsigma}}{24-2\overline{\varsigma}},
\end{equation}
\begin{equation}
\label{h-varsigma}
\overline{h}=\frac{\sin\overline{\gamma}-\sin\overline{\gamma}_1}{1+\sin\overline{\gamma}}=\frac{2\overline{s}}{24-3\overline{s}},
\end{equation}

\begin{equation}
\label{thetabar}
\overline{\widetilde{\vartheta}}_0=\frac{\left(\frac{\sin\overline{\gamma}_2}{\sin\overline{\gamma}}\right)^{-\beta_1}-
(1-\overline{\delta})^{-\beta_1}}
{\left((1-2\overline{\delta})\frac{\sin\overline{\gamma}_1}{\sin\overline{\gamma}})\right)^{-\beta_1}-(1-\overline{\delta})^{-\beta_1}}.
\end{equation}
By \eqref{12f-1new}, \eqref{21f-1}, \eqref{B-1-sk} and \eqref{esse-varsigma} we have
\begin{equation}
\label{B-1-sk-1new}
A^{(1)}_{\overline{s},k}\leq2\rho_0^2
\quad\mbox{, } k\in\mathbb{N}.
\end{equation}

Let
\begin{equation}
\label{T-tilde}
\widetilde{T}= \max\left\{\left(40\vartheta_2^{-1-(\overline{s}\overline{h}/2)^{-n}}\right)^{1/2}\rho_0,T_{\overline{s}}\right\}.
\end{equation}
By \eqref{B-2-sk}, \eqref{B-1-sk-1new} and \eqref{T-tilde}
we have, for every $T\geq\widetilde{T}$,
\begin{equation}
\label{B-2-sk-1new}
A^{(2)}_{\overline{s},k}\leq-\frac{1}{20}T^2 \vartheta_2^{1+(\overline{s}\overline{h}/2)^{-n}}
\mbox{, } k\in\mathbb{N},
\end{equation}
and $C$ depends on $E, M, \lambda$ and $\Lambda$ only.

By \eqref{B-1-sk-1new}, \eqref{B-2-sk-1new} we have, for every $T\geq\widetilde{T}$,

\begin{gather}
\label{3-29f}
\left\Vert U^{(\tau)}_{\mu}\right\Vert_{L^{2}\left(\widetilde{B}_{\overline{r}_{k+1}}(\widetilde{\overline{w}}_{k+1})\right)}
\leq \\ \nonumber
\leq \overline{C}_0(H(T)+1)T\rho_0^{-1}\left(e^{2\mu\rho_0^2 }\widetilde{\varepsilon}_1^{\delta_3}+e^{-\frac{1}{20}\mu T^2 \delta_3}\right)^{\overline{\widetilde{\vartheta}}_0^{k}}\mbox{, } k\in\mathbb{N},
\end{gather}
where
\[\overline{C}_0=(C\widetilde{C}_0)^{1/(1-\overline{\widetilde{\vartheta}}_0)},\]
\[\widetilde{\varepsilon}_1=\frac{(\mu T^2)^{1/4}\varepsilon}{(H(T)+1)(T\rho_0^{-1}+1)}\]
\[\delta_3=\vartheta_2^{1+(\overline{s}\overline{h}/2)^{-n}}\]
and $C$ depends on $E, M, \lambda$ and $\Lambda$ only.

Denote by
\[d_1=\overline{l}_1\left(1-\sin\overline{\gamma}_1\right)\mbox{ , } d_k=\chi^{k-1}d_1 \mbox{, for every } k\in\mathbb{N}\]
here, we recall that by \eqref{chi} and \eqref{q-a-b} we have $\chi=\frac{1}{4}$ and, by \eqref{2-5f} $\overline{l}_1=\frac{C_{}\ast \overline{s}^{3/2}\rho_0/2}{1+\sin\overline{\gamma}}$.

Let $r\in(0,d_1]$ be a number that we will choose later on. Let us denote by $\varrho=rd_1^{-1}$,
$$k_0=\min\left\{k\in\mathbb{N}: d_k\leq r\right\}$$
and
$$\delta_4=\left\vert\log_4\overline{\widetilde{\vartheta}}_0\right\vert.$$
We have
\begin{equation}
\label{2-25f}
\left\vert\log_4(\varrho/4)\right\vert\leq k_0 < \left\vert\log_4(\varrho/16)\right\vert
\end{equation}
and
\begin{equation}
\label{1-26f}
\overline{\widetilde{\vartheta}}_0^2\varrho^{\delta_4}\leq \overline{\widetilde{\vartheta}}_0^{k_0}\leq\overline{\widetilde{\vartheta}}_0\varrho^{\delta_4}.
\end{equation}

Now by applying \cite[Theorem 8.17]{GT} \eqref{1-s15} \eqref{2-59}, \eqref{1-63} we have, for every $\tau\in(0,t_0]$ and every $T\geq\widetilde{T}$,
\begin{gather}
\label{4-26f}
\left\vert u\left(0,\tau\right)\right\vert\leq \left\vert u\left(0,\tau\right)-u\left(\widetilde{\overline{w}}_{k_0+1},\tau\right)\right\vert+\left\vert u\left(\widetilde{\overline{w}}_{k_0+1},\tau\right)-U^{(\tau)}_{\mu}\left(\widetilde{\overline{w}}_{k_0+1}\right)\right\vert+\\ \nonumber
+\left\vert U^{(\tau)}_{\mu}\left(\widetilde{\overline{w}}_{k_0+1}\right)\right\vert\leq Ct_0\rho_0^{-1}H(t_0)\varrho+C\left(T\rho_0^{-1}\right)^2H(T)\left(T^2\mu\right)^{-1/2}+ \\ \nonumber
+C\varrho^{-\left(\frac{n+1}{2}\right)}(H(T)+1)T\rho_0^{-1}\left(e^{2\mu\rho_0^2 }\widetilde{\varepsilon}_1^{\delta_3}+e^{-\frac{1}{20}\mu T^2 \delta_3}\right)^{\overline{\widetilde{\vartheta}}_0^2\varrho^{\delta_4}}
\end{gather}
where $C$ depends on $E, M, \lambda$ and $\Lambda$ only.
Now we have trivially
\begin{equation}
\label{1-final}
e^{2\mu\rho_0^2 }\widetilde{\varepsilon}_1^{\delta_3}+e^{-\frac{1}{20}\mu T^2 \delta_3}\leq e^{2\mu T^2 }\varepsilon^{\delta_3}+e^{-\frac{1}{20}\mu T^2 \delta_3}.
\end{equation}
Hence, if

\begin{equation*}
\varepsilon\leq e^{-\left(2/\delta_3+1/20\right)}
\end{equation*}
then we choose
\begin{equation*}
\label{1-26f}
\mu=\frac{1}{T^2}\frac{\delta_3|\log \varepsilon|}{2+\delta_3/20}
\end{equation*}
and by \eqref{4-26f} and \eqref{1-final} we have

\begin{gather}
\label{2-final}
\left\vert u\left(0,\tau\right)\right\vert\leq Ct_0\rho_0^{-1}H(t_0)\varrho+C\left(T\rho_0^{-1}\right)^2H(T)|\log \varepsilon|^{-1/2}+ \\ \nonumber
+C\varrho^{-\left(\frac{n+1}{2}\right)}(H(T)+1)T\rho_0^{-1}\varepsilon^{\delta_5\varrho^{\delta_4}}
\end{gather}
where $C$ depends on $E, M, \lambda$ and $\Lambda$ only and
$$\delta_5=\frac{\delta_3\overline{\widetilde{\vartheta}}_0^2}{40+\delta_3}.$$
Now let us choose
$$\varrho=|\log\varepsilon|^{-1/(2\delta_4)}$$
and by \eqref{2-final} we have
\begin{gather}
\label{3-final}
\left\vert u\left(0,\tau\right)\right\vert\leq C\left(T\rho_0^{-1}\right)^2(H(T)+1)|\log \varepsilon|^{-1/2}
\end{gather}
where $C$ depends on $E, M, \lambda$ and $\Lambda$ only.

\noindent Otherwise, if
\begin{equation*}
\varepsilon\geq e^{-\left(2/\delta_3+1/20\right)}
\end{equation*}
then by \eqref{1-s15b} we have trivially

\begin{gather}
\label{4-final}
\left\vert u\left(0,\tau\right)\right\vert\leq Ct_0\rho_0^{-1}H(t_0)\leq Ce^{\left(2/\delta_3+1/20\right)}t_0\rho_0^{-1}H(t_0)\varepsilon.
\end{gather}
where $C$ depends on $E, M, \lambda$ and $\Lambda$ only.
By \eqref{3-final} and  \eqref{4-final} we have, for $0<\varepsilon<e^{-1}$ and every $T\geq\widetilde{T}$

\begin{gather}
\label{5-final}
\left\Vert u\right\Vert_{L^{\infty}((\Gamma_2^{(i)}\cap\partial G)\times[0,t_0])}\leq C\left(T\rho_0^{-1}\right)^2(H(T)+1)|\log \varepsilon|^{-1/2}
\end{gather}
where $C$ depends on $E, M, \lambda$ and $\Lambda$ only.
By \eqref{5-final} and \eqref{OmG-15-1} we have

\begin{gather}
\label{OmG-15-1-final}
\sup_{t\in[0,t_0]}\left(\rho_0^{-n}\int_{\Omega_j\setminus G}u^2_j(x,t)dx\right)\leq
\\ \nonumber
\leq C(t_0\rho_0^{-1}+1)^{5/2}\left(H(t_0)\right)^{3/2}\left(T\rho_0^{-1}\right)(H(T)+1)^{1/2}|\log \varepsilon|^{-1/4}
\end{gather}
where $C$ depends on $E, M, \lambda$ and $\Lambda$ only.

Now we fix $t_0=t_{\ast}+\lambda\rho_0$ and $T=\widetilde{T}$ and by \eqref{OmG-15-1-final} and Proposition \ref{stimaforte} we have

\begin{equation}
\label{2-35r-final}
d_{\mathcal{H}}\left(\overline{\Omega}_1,\overline{\Omega}_2\right)\leq K_1\rho_0|\log \varepsilon|^{-1/(8K_0)},
\end{equation}
where
\begin{equation*}
\label{1-38r-final}
K_0=e^{\mathcal{F}(t_{\ast})},
\end{equation*}

$$\overline{t}_0=t_0-\lambda\rho_0,$$
$$K_1=C\left(\frac{H(\widetilde{T})}{
H(t_{\ast})}\right)^{1/(8K_0)},$$
where $\mathcal{F}(t_{\ast})$ is defined by \eqref{F-CALL} and $C$ depends on $E, M, \lambda$ and $\Lambda$ only.$\square$

\section{Appendix} \label{appendix}

\subsection{Proof of Theorem \ref{boundary-Colombini}}\label{Regularity-proof}

 Theorem \ref{boundary-Colombini} is a straightforward consequence of Theorem \ref{Colombini} below and of standard results concerning the extension of function

\begin{theo}\label{Colombini}
Let $\Omega$ be a bounded domain of $\mathbb{R}^n$ that satisfies \eqref{1-138}. Let $A(x)$ be a real-valued symmetric $n\times n$ matrix satisfying \eqref{1-65}. Let $m:=\left[\frac{n+2}{4}\right]$. Assume that $\partial^k_t F\in L^{\infty}(\Omega\times(0,T))$ for every $k\in\{0,\cdots, 2m+2\}$ and let $u\in\mathcal{W}\left([0,T];\Omega\right)$ be the solution to the problem
\begin{equation}
\label{0-s7}
\left\{\begin{array}{ll}
\partial^2_{t}u-\mbox{div}\left(A(x)\nabla_x u\right)=F(x,t), \quad \hbox{in } \Omega\times [0,T],\\[2mm]
u=0 \quad \hbox{on } \partial\Omega\times [0,T]\\[2mm]
u(\cdot,0)=\partial_tu(\cdot,0)=0, \quad \hbox{in } \Omega.
\end{array}\right.
\end{equation}
Let $\alpha\in(0,1)$. Then for every $t\in[0,T]$ we have $u(\cdot,t)\in C^{1,\alpha}(\Omega)$ and the following inequalities hold true
\begin{subequations}
\label{2-s12}
\begin{equation}
\label{2-s12a}
\left\Vert \partial^2_tu(\cdot,t)\right\Vert_{L^{\infty}(\Omega)}\leq C\rho_0^{-2}\left(\rho^{2m+3}_0TF_{2m+2}+\sum^m_{j=0}\rho^{2j+2}_0F_{2j}\right)\quad\hbox{, }
\end{equation}
\begin{equation}
\label{2-s12b}
 \left\Vert u(\cdot,t)\right\Vert_{C^{1,\alpha}(\Omega)}\leq C\left(\rho^{2m+3}_0TF_{2m+2}+\sum^m_{j=0}\rho^{2j+2}_0F_{2j}\right)\quad\hbox{, }
\end{equation}
\end{subequations}
where $F_j:=\left\Vert \partial^{j}_tF\right\Vert_{L^{\infty}(\Omega\times[0,T])}$ for every $j\in \mathbb{N}\cup\{0\}$
and $C$ depends on $\alpha, n, E, M,\lambda$ and $\Lambda$ only.
\end{theo}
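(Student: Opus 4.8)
The plan is to reduce the whole statement to a single $L^\infty(\Omega)$-estimate for $\partial^2_t u(\cdot,t)$ and then recover the $C^{1,\alpha}$-bound from elliptic theory. Rewriting \eqref{0-s7} as $-\mathrm{div}(A(x)\nabla_x u(\cdot,t))=F(\cdot,t)-\partial^2_t u(\cdot,t)$ with $u(\cdot,t)=0$ on $\partial\Omega$, once we know the right-hand side lies in $L^\infty(\Omega)$ the global $W^{2,p}$-estimate for the Dirichlet problem --- valid for every $p<\infty$ since $A$ is Lipschitz (hence VMO) by \eqref{1-65} and $\partial\Omega$ is of class $C^{1,1}$ by \eqref{1-138} --- gives $u(\cdot,t)\in W^{2,p}(\Omega)$ with norm controlled by $\|F(\cdot,t)\|_{L^\infty(\Omega)}+\|\partial^2_t u(\cdot,t)\|_{L^\infty(\Omega)}$; choosing $p>n/(1-\alpha)$ and using $W^{2,p}(\Omega)\hookrightarrow C^{1,\alpha}(\Omega)$ then yields \eqref{2-s12b}. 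So everything is contained in the bound \eqref{2-s12a}.

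To get \eqref{2-s12a} I would differentiate \eqref{0-s7} in time and bootstrap. Put $v_j:=\partial^j_t u$, $0\le j\le 2m+2$. Since $A$ is $t$-independent, each $v_j$ solves $\partial^2_t v_j-\mathrm{div}(A\nabla_x v_j)=\partial^j_t F$ with homogeneous Dirichlet condition, and $v_j(\cdot,0)=\partial_t v_j(\cdot,0)=0$ --- the latter uses, besides the vanishing initial data in \eqref{0-s7}, that $\partial^k_t F(\cdot,0)=0$ for $k\le 2m+1$, the compatibility available whenever the Theorem is applied (cf.\ \eqref{2-s13b} and the extension of $\psi$ that produces \eqref{0-s7}). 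The standard energy identity for $v_j$ (test with $\partial_t v_j$, integrate over $\Omega$, use uniform ellipticity) gives, for $j\le 2m+2$,
\[
\sup_{t\in[0,T]}\bigl(\|\partial_t v_j(\cdot,t)\|_{L^2(\Omega)}+\|v_j(\cdot,t)\|_{H^1(\Omega)}\bigr)\le C\int_0^T\|\partial^j_t F(\cdot,s)\|_{L^2(\Omega)}\,ds ,
\]
so in particular $v_j(\cdot,t)\in H^1_0(\Omega)$ for every $t$ and every $j\le 2m+2$, the bound being of the form $CT F_j$ up to the power of $\rho_0$ dictated by \eqref{1-138a} and the normalisation of the norms.

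Let $L:=-\mathrm{div}(A\nabla_x\cdot)$ with homogeneous Dirichlet condition, a nonnegative self-adjoint operator on $L^2(\Omega)$ with compact resolvent. From the equation for $v_j$ one reads off the algebraic identity $Lv_j=\partial^j_t F-v_{j+2}$, whose right-hand side is in $L^2(\Omega)$ for $j\le 2m$ by the energy bound; hence, by interior and boundary $H^2$ elliptic regularity, $v_j\in\mathrm{Dom}(L)$ and $v_j=L^{-1}\partial^j_t F-L^{-1}v_{j+2}$. Iterating for $j=2,4,\dots,2m$,
\[
\partial^2_t u(\cdot,t)=\sum_{i=0}^{m-1}(-1)^i L^{-(i+1)}\partial^{2i+2}_t F(\cdot,t)+(-1)^m L^{-m}v_{2m+2}(\cdot,t).
\]
The terms in the sum are handled by the maximum principle: $L^{-1}$ is positivity preserving and $\|L^{-1}g\|_{L^\infty(\Omega)}\le C\rho_0^2\|g\|_{L^\infty(\Omega)}$ (comparison with the torsion function, the diameter of $\Omega$ being a priori bounded via \eqref{1-138}), so iterating $\|L^{-(i+1)}\partial^{2i+2}_t F(\cdot,t)\|_{L^\infty(\Omega)}\le C^{i+1}\rho_0^{2i+2}F_{2i+2}$. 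For the remainder, since $v_{2m+2}(\cdot,t)\in H^1_0(\Omega)=\mathrm{Dom}(L^{1/2})$ one writes $L^{-m}v_{2m+2}=L^{-(m+1/2)}(L^{1/2}v_{2m+2})$ and invokes the ultracontractive (Nash--Aronson) bound $\|e^{-sL}\|_{L^2(\Omega)\to L^\infty(\Omega)}\le C(\rho_0^2/s)^{n/4}$ with $C=C(n,\lambda)$ --- obtained by dominating the Dirichlet heat kernel on $\Omega$ by the heat kernel of $-\mathrm{div}(A\nabla\cdot)$ on $\mathbb{R}^n$ --- to get the fractional embedding $\|L^{-\sigma}h\|_{L^\infty(\Omega)}\le C\rho_0^{2\sigma-n/2}\|h\|_{L^2(\Omega)}$ for every $\sigma>n/4$. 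Here is precisely where $m=[\tfrac{n+2}{4}]$ enters: it guarantees $m+\tfrac12\ge\tfrac{n+1}{4}>\tfrac n4$, so the embedding applies with $\sigma=m+\tfrac12$ and, together with the energy bound on $\|L^{1/2}v_{2m+2}(\cdot,t)\|_{L^2(\Omega)}$, gives $\|L^{-m}v_{2m+2}(\cdot,t)\|_{L^\infty(\Omega)}\le C\rho_0^{2m+1}T F_{2m+2}$. Adding the two contributions produces $\|\partial^2_t u(\cdot,t)\|_{L^\infty(\Omega)}\le C\rho_0^{-2}\bigl(\rho_0^{2m+3}T F_{2m+2}+\sum_{j=1}^{m}\rho_0^{2j+2}F_{2j}\bigr)$, i.e.\ \eqref{2-s12a}.

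I expect the hyperbolic energy estimates and the elliptic regularity to be routine; the real labour is the quantitative bookkeeping --- checking that the energy constant, the $W^{2,p}$ and $H^2$ elliptic constants, the Poincar\'e/torsion-function constant and, above all, the ultracontractivity constant depend only on $\alpha,n,E,M,\lambda,\Lambda$, and keeping track of the normalised norms so that the powers of $\rho_0$ come out exactly as in \eqref{2-s12}. The one conceptual subtlety is that, the coefficients being merely Lipschitz, $H^2$ elliptic regularity does not iterate, so one cannot reach $L^\infty$ by a Sobolev embedding of $H^{2k}$; this is exactly why the remainder $L^{-m}v_{2m+2}$ must be estimated through the heat-semigroup (ultracontractive) description of $\mathrm{Dom}(L^{m+1/2})$ rather than through ordinary Sobolev spaces, and getting that estimate with a domain-independent constant is the delicate point.
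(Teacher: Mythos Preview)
Your proof is correct and shares the paper's skeleton --- differentiate in time, read off the elliptic identity $Lv_{2j}=\partial^{2j}_tF-v_{2j+2}$, and bootstrap from the energy estimate on $v_{2m+2}$ up to an $L^\infty$ bound on $v_2$ --- but the bootstrap mechanism is genuinely different. The paper climbs the Sobolev ladder: starting from $v_{2m+2}\in H^1_0\hookrightarrow L^{p_0}$ it applies the $W^{2,p}$ Dirichlet estimate and the embedding $W^{2,p_k}\hookrightarrow L^{p_{k+1}}$ (with $1/p_{k+1}=1/p_k-2/n$) successively to $v_{2m},v_{2m-2},\ldots,v_2$, landing in $L^\infty$ after exactly $m$ steps; this forces a case split when $n\equiv 2\pmod 4$ because one step hits the critical exponent. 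You instead expand $v_2=\sum_{i=0}^{m-1}(-1)^iL^{-(i+1)}\partial^{2i+2}_tF+(-1)^mL^{-m}v_{2m+2}$ and treat the two pieces by different tools: the $F$-terms via the torsion-function $L^\infty\!\to\!L^\infty$ bound for $L^{-1}$, and the remainder via the ultracontractive estimate $\|L^{-\sigma}\|_{L^2\to L^\infty}\le C\rho_0^{2\sigma-n/2}$ for $\sigma=m+\tfrac12>n/4$, which you correctly extract from Aronson's Gaussian upper bound on the Dirichlet heat kernel together with the Faber--Krahn lower bound on the first eigenvalue. Your route avoids the $n\bmod 4$ dichotomy and the whole $W^{2,p}$ machinery (you only need $H^2$ regularity to know $v_j\in\mathrm{Dom}(L)$); the paper's route is more elementary in that it stays entirely within the $L^p$ elliptic theory of Gilbarg--Trudinger and needs no heat-kernel input. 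Both yield the same constants' dependence, and the compatibility issue you flag (vanishing of $\partial^k_tF(\cdot,0)$) is handled identically in the paper, which simply writes down the initial conditions for $u^{(j)}$ and refers to \cite{Co} for the regularity justification.
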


In order to prove Theorem \ref{Colombini} we use  propositions \ref{prop1-s1}, \ref{prop-s6} given below.

\begin{prop}\label{prop1-s1}
Assume that $\Omega$ and $A(x)$ are as in Theorem \ref{Colombini}. Let $\Omega$ be a bounded domain of $\mathbb{R}^n$ that satisfies \eqref{1-138a} and whose boundary is of class $C^{1,1}$. Let $A(x)$ be a real-valued symmetric $n\times n$ matrix satisfying \eqref{1-65}. If $f\in L^p(\Omega)$, $p\in (1,\infty)$, then the solution $v$ to the Dirichlet problem
\begin{equation}
\label{1-s2}
\left\{\begin{array}{ll}
\mbox{div}\left(A(x)\nabla v\right)=f, \quad \hbox{in } \Omega,\\[2mm]
v\in H^1_0(\Omega),
\end{array}\right.
\end{equation}
belongs to $W^{2,p}(\Omega)$ and the following estimate holds true
\begin{equation}
\label{2-s2}
\left\Vert v\right\Vert_{W^{2,p}(\Omega)}\leq C\rho_0^2 \left\Vert f\right\Vert_{L^{p}(\Omega)},
\end{equation}
where $C$ depends on $\lambda,\Lambda,E, M$ and $p$ only.
\end{prop}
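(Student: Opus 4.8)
The statement is the classical global $W^{2,p}$ (Calder\'on--Zygmund / Agmon--Douglis--Nirenberg) estimate for a uniformly elliptic operator with Lipschitz principal coefficients on a domain of class $C^{1,1}$; the only genuinely delicate point is that the constant must depend \emph{only} on $\lambda,\Lambda,E,M,p$, and this is precisely what the dimensionally homogeneous normalization of the norms (the Remark following Definition \ref{def:2.1}) and the quantitative $C^{1,1}$ character of $\partial\Omega$ are designed to secure. The plan is to reduce first to $\rho_0=1$ via the dilation $x=\rho_0 y$, under which \eqref{1-65} and \eqref{1-138} are preserved and the factor $\rho_0^2$ in \eqref{2-s2} merely restores homogeneity. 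Existence and uniqueness of $v\in H^1_0(\Omega)$ follow from the Lax--Milgram theorem, the associated bilinear form being coercive by \eqref{1-65a}. Then, writing $\beta_j:=\sum_{i=1}^n\partial_i a^{ij}$, the Lipschitz bound \eqref{2-65} gives $|\beta|\le\Lambda$ a.e.\ (with $\rho_0=1$), so $v$ solves the non-divergence equation $Lv:=\sum_{i,j}a^{ij}\partial_{ij}v=f-\beta\cdot\nabla v$ with $a^{ij}$ uniformly continuous, with modulus of continuity controlled by $\Lambda$ — exactly the hypotheses of the $L^p$ Schauder-type theory.

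Next I would establish local estimates. For interior points this is the standard interior $W^{2,p}$ estimate: whenever $B_{2r}(x_0)\Subset\Omega$, $\|v\|_{W^{2,p}(B_r(x_0))}\le C\big(\|f\|_{L^p(B_{2r}(x_0))}+\|\nabla v\|_{L^p(B_{2r}(x_0))}+\|v\|_{L^p(B_{2r}(x_0))}\big)$, with $C=C(\lambda,\Lambda,n,p,r)$ (cf.\ \cite[Chapter~9]{GT}). For boundary points I would flatten $\partial\Omega$: given $P\in\partial\Omega$, use the $C^{1,1}$ graph $\varphi$ of Definition \ref{def:2.1}; the change of variables $y=(x',x_n-\varphi(x'))$ is bi-Lipschitz with a Lipschitz Jacobian, all relevant norms controlled by $E$, and it straightens $\Omega$ near $P$ to a half-ball while turning $v=0$ into a flat homogeneous Dirichlet condition. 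The transformed operator is again uniformly elliptic (ellipticity controlled by $\lambda$ and $E$) with Lipschitz principal part and bounded first-order part (bound controlled by $\Lambda$ and $E$), so the classical half-ball boundary $W^{2,p}$ estimate applies and, transported back, yields an estimate of the same form as the interior one on $\Omega\cap B_r(P)$, now with $C$ also depending on $E$.

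Finally I would patch and absorb. Since \eqref{1-138a}--\eqref{1-138b} force $\mathrm{diam}\,\Omega\le C(E,M)$, one covers $\overline{\Omega}$ by a number $N\le N(E,M)$ of balls of the two types above, of a fixed radius $r=r(E)$; summing the local bounds gives
\begin{equation*}
\|v\|_{W^{2,p}(\Omega)}\le C\big(\|f\|_{L^p(\Omega)}+\|\nabla v\|_{L^p(\Omega)}+\|v\|_{L^p(\Omega)}\big),\qquad C=C(\lambda,\Lambda,E,M,p).
\end{equation*}
The gradient term is removed by the interpolation inequality $\|\nabla v\|_{L^p(\Omega)}\le\epsilon\|D^2v\|_{L^p(\Omega)}+C_\epsilon\|v\|_{L^p(\Omega)}$ (valid on $C^{1,1}$ domains, constants depending on $E,M$), choosing $\epsilon$ small to absorb $\epsilon\|D^2v\|_{L^p}$ on the left. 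It remains to bound $\|v\|_{L^p(\Omega)}$ by $\|f\|_{L^p(\Omega)}$: for $p=2$ this is the energy identity combined with Poincar\'e's inequality (Poincar\'e constant comparable to $\mathrm{diam}\,\Omega$), and the general exponent is reached by a finite bootstrap, alternating the estimate just obtained with the Sobolev embedding $W^{2,q}\hookrightarrow L^{q^{\ast}}$ and H\"older's inequality on the bounded set $\Omega$. Undoing the dilation reinstates the factor $\rho_0^2$ and yields \eqref{2-s2}.

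I expect the main obstacle to be not any single PDE estimate — interior and boundary $W^{2,p}$ bounds, interpolation, and the energy inequality are all textbook — but the bookkeeping needed to ensure that every constant depends solely on $\lambda,\Lambda,E,M,p$. This forces one to carry out the boundary flattening in a fully quantitative way (so that the geometry enters only through $E$ and the scale $\rho_0$) and to use the dimensionally homogeneous normalization of all norms consistently throughout the covering and absorption steps.
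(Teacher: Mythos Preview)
Your proof is correct and follows the same underlying route as the paper, namely the classical $L^p$ theory of Chapter~9 in \cite{GT}; the paper, however, dispenses with all the details and simply records that the proposition ``is an immediate consequence of \cite[Theorem~9.15]{GT} and \cite[Lemma~9.17]{GT}''. What you have written is essentially an outline of how those results are proved (interior and boundary estimates via flattening, patching, interpolation, and absorption), with the additional care---appropriate here---of tracking that all constants depend only on $\lambda,\Lambda,E,M,p$ through the quantitative $C^{1,1}$ hypothesis and the normalized norms.
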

\begin{proof}
The Proposition is an immediate consequence of \cite[Theorem 9.15]{GT} and \cite[Lemma 9.17]{GT}.
\end{proof}

\bigskip

\begin{prop}\label{prop-s6}
 Assume that $\Omega$ and $A(x)$ are as in Theorem \ref{Colombini}. Let $F\in L^{2}(\Omega\times(0,T))$ and let $u\in\mathcal{W}\left([0,T];\Omega\right)$ be the solution to the problem
\begin{equation}
\label{2-s6}
\left\{\begin{array}{ll}
\partial^2_{t}u-\mbox{div}\left(A(x)\nabla_x u\right)=F, \quad \hbox{in } \Omega\times [0,T],\\[2mm]
u=0 \quad \hbox{on } \partial\Omega\times [0,T],\\[2mm]
u(\cdot,0)=\partial_tu(\cdot,0)=0, \quad \hbox{in } \Omega.
\end{array}\right.
\end{equation}
Then the following inequality holds true

\begin{equation}
\label{4-s6}
\left\Vert u(\cdot,t)\right\Vert_{L^{p_0}(\Omega)}\leq C \rho_0 T \left\Vert F\right\Vert_{L^{\infty}(\Omega\times(0,T))},\quad \hbox{for every } t\in(0,T).
\end{equation}
where $p_0$ is the Sobolev imbedding exponent, namely

\begin{subequations}
\label{p_0}
\begin{equation}
\label{p_0a}
p_0=\frac{2n}{n-2} \quad \hbox{, for } n>2,
\end{equation}
\begin{equation}
\label{p_0b}
p_0\hbox{ is an arbitrary number of}\quad [2,+\infty) \quad \hbox{, for } n=2
\end{equation}
\end{subequations}
and $C$ depends on $n, E, M$ and $\lambda$ only.
\end{prop}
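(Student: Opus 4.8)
The plan is to derive \eqref{4-s6} from the classical energy estimate for the wave equation together with the Sobolev embedding $H^1_0(\Omega)\hookrightarrow L^{p_0}(\Omega)$; neither the quantitative unique continuation theorems nor the elliptic regularity of Proposition \ref{prop1-s1} are needed at this point. First I would record the energy identity. Since $u\in\mathcal{W}([0,T];\Omega)$ solves \eqref{2-s6} with homogeneous Dirichlet data, $u(\cdot,t)\in H^1_0(\Omega)$ for every $t$, and, because $t\mapsto u(\cdot,t)$ is $C^1$ with values in $H^1(\Omega)$ and the trace operator is continuous, also $\partial_t u(\cdot,t)\in H^1_0(\Omega)$. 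Moreover $\mbox{div}(A\nabla u)=\partial_t^2 u-F\in L^2(\Omega)$, so one may multiply the equation by $\partial_t u$, integrate over $\Omega$ and integrate by parts, the boundary term vanishing because $\partial_t u=0$ on $\partial\Omega$. Setting
\[
\mathcal E(t):=\frac12\int_\Omega\left(\left\vert\partial_t u(x,t)\right\vert^2+A(x)\nabla u(x,t)\cdot\nabla u(x,t)\right)dx ,
\]
this gives $\mathcal E'(t)=\int_\Omega F(x,t)\,\partial_t u(x,t)\,dx$, while $\mathcal E(0)=0$ since the initial data vanish.

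Next I would turn this into a bound on $\nabla u(\cdot,t)$. By Cauchy--Schwarz, $\mathcal E'(t)\le\left\Vert F(\cdot,t)\right\Vert_{L^2(\Omega)}\left\Vert\partial_t u(\cdot,t)\right\Vert_{L^2(\Omega)}\le\sqrt2\,\left\Vert F(\cdot,t)\right\Vert_{L^2(\Omega)}\,\mathcal E(t)^{1/2}$, hence $\frac{d}{dt}\mathcal E(t)^{1/2}\le\frac1{\sqrt2}\left\Vert F(\cdot,t)\right\Vert_{L^2(\Omega)}$; no Gronwall argument is required because the equation carries no zeroth-order term. Integrating from $0$ and using the ellipticity condition \eqref{1-65a} yields
\[
\left(\int_\Omega\left\vert\nabla u(x,t)\right\vert^2dx\right)^{1/2}\le\lambda^{-1/2}\int_0^t\left(\int_\Omega\left\vert F(x,s)\right\vert^2dx\right)^{1/2}ds .
\]
Since $\left(\int_\Omega|F(\cdot,s)|^2\right)^{1/2}\le|\Omega|^{1/2}\left\Vert F\right\Vert_{L^\infty(\Omega\times(0,T))}$, using $|\Omega|\le M\rho_0^n$ from \eqref{1-138a} and bounding $\int_0^t\le T\sup_{(0,T)}$, I obtain $\left(\int_\Omega|\nabla u(\cdot,t)|^2\right)^{1/2}\le M^{1/2}\lambda^{-1/2}\rho_0^{n/2}\,T\,\left\Vert F\right\Vert_{L^\infty(\Omega\times(0,T))}$.

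Finally I would invoke the Sobolev inequality for functions in $H^1_0(\Omega)$. Extending $u(\cdot,t)$ by zero outside $\Omega$ and applying the Gagliardo--Nirenberg--Sobolev inequality on $\mathbb{R}^n$ gives $\left\Vert u(\cdot,t)\right\Vert_{L^{p_0}(\Omega)}\le C\left\Vert\nabla u(\cdot,t)\right\Vert_{L^2(\Omega)}$ with $C$ depending on $n$ only when $n>2$ and $p_0=\frac{2n}{n-2}$ as in \eqref{p_0a}; when $n=2$ the embedding $H^1_0(\Omega)\hookrightarrow L^{p_0}(\Omega)$ holds for every $p_0\in[2,\infty)$, with an additional factor $|\Omega|^{1/p_0}\le M^{1/p_0}\rho_0^{2/p_0}$. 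Substituting the gradient bound of the previous step and rewriting everything in the normalized $L^{p_0}$-norm, the powers of $\rho_0$ collapse to the single power $\rho_0^1$ --- one uses $\frac n2-\frac n{p_0}=1$ when $n>2$, and $\frac n2-\frac n{p_0}+\frac2{p_0}=1$ when $n=2$ --- which is precisely \eqref{4-s6}, the constant $C$ depending on $n$, $M$, $\lambda$ only (and on $p_0$ in the case $n=2$).

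The whole argument is routine; the only points deserving care are purely bookkeeping. One must check that the energy identity is licit with merely $u\in\mathcal{W}([0,T];\Omega)$, which is immediate from the regularity built into the class $\mathcal{W}$, and, in dimension $n=2$, one must keep track of the dependence of the Sobolev constant on $|\Omega|$ so that the stated power of $\rho_0$ (and the stated list of parameters on which $C$ depends) comes out correctly.
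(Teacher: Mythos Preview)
Your argument is correct and follows essentially the same strategy as the paper: an energy estimate for the wave equation combined with the Sobolev embedding $H^1_0(\Omega)\hookrightarrow L^{p_0}(\Omega)$. The execution differs only in two minor tactical choices. First, the paper bounds the energy via a weighted Young inequality and Gronwall, obtaining $K(\tau)\le eT\int_0^T\!\!\int_\Omega F^2$, whereas you bypass Gronwall by differentiating $\mathcal E^{1/2}$ directly; your route is slightly cleaner and yields a constant free of the factor $e$. Second, the paper first passes through the Poincar\'e inequality to control the full $H^1$ norm and then invokes Sobolev, while you extend by zero and apply Gagliardo--Nirenberg--Sobolev on $\mathbb R^n$ directly to $\|\nabla u\|_{L^2}$; as a byproduct your constant does not depend on $E$, which is a harmless sharpening of the stated dependence.
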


\begin{proof}
Let $\tau\in(0,T]$. By multiplying both the sides of first equation in \eqref{2-s6} by $\partial_tu$ and by integrating over $\Omega\times (0,\tau)$ we get \begin{gather*}
\int^{\tau}_{0}\int_{\Omega}F\partial_tu dxdt=-\frac{1}{2}\int^{\tau}_{0}\int_{\Omega} \partial_t\left(A(x)\nabla u\cdot\nabla u+\left(\partial_tu\right)^2\right)dxdt=\\
=-\frac{1}{2}\int_{\Omega}\left(A(x)\nabla u\cdot\nabla u+\left(\partial_tu\right)^2\right)dx.
\end{gather*}
Hence, denoting by
\begin{equation*}
K(\tau)=\int_{\Omega}\left(A(x)\nabla u(x,t)\cdot\nabla u(x,t)+\left(\partial_tu(x,t)\right)^2\right)dx,
\end{equation*}
we get
\begin{gather*}
K(\tau)\leq 2\int^{\tau}_{0}\int_{\Omega}\left\vert F\right\vert \left\vert\partial_tu \right\vert dxdt \leq T\int^{\tau}_{0}\int_{\Omega}F^2dxdt+\frac{1}{T}\int^{\tau}_{0}\int_{\Omega}\left(\partial_tu\right)^2dxdt\leq\\ \leq T\int^{\tau}_{0}\int_{\Omega}F^2dxdt+\frac{1}{T}\int^{\tau}_{0}K(t)dt.
\end{gather*}
By Gronwall inequality we derive the energy inequality
\begin{equation}
\label{1-s4}
K(\tau)\leq eT\int^T_0\int_{\Omega}F^2dxdt.
\end{equation}
In particular \eqref{1-s4} gives
\begin{equation}
\label{1-s5}
\int_{\Omega}\left\vert\nabla u(x,t)\right\vert^2 dx\leq e\lambda^{-1}T\int^T_0\int_{\Omega}F^2dxdt.
\end{equation}
Since $u(\cdot,t)\in H^1_0(\Omega)$, by \eqref{1-s5} and the Poincar\'{e} inequality we have
\begin{equation}
\label{3-s5}
\left\Vert u(\cdot,t)\right\Vert_{H^1(\Omega)}\leq C\rho_0 T\left\Vert F\right\Vert_{L^{\infty}(\Omega\times(0,T))}.
\end{equation}
Finally by the imbedding Sobolev theorem the thesis follows.
\end{proof}

\bigskip

\textbf{Sketch of the proof of Theorem \ref{Colombini}.}

In this sketch of the proof we skip on the question of regularity of the solution for which we refer to \cite{Co} and we focus on the proof of inequality \eqref{2-s12}.

In order to estimate $\left\Vert \partial^2_t u(\cdot,t)\right\Vert_{L^{\infty}(\Omega)}$, for every $t\in(0,T)$ we distinguish two cases: (a) $n$ is not of the type $4h+2$, $h\in\mathbb{N}\cup\{0\}$, (b) $n$ is of the type $4h+2$, $h\in\mathbb{N}\cup\{0\}$.

\textbf{Case (a)}.
Denote by $p_k$, $k\in\mathbb{N}\cup\{0\}$, the sequence such that

\begin{equation*}
\frac{1}{p_k}=\frac{1}{p_0}-\frac{2k}{n}  \quad \hbox{, for } k\in\mathbb{N}\cup\{0\}.
\end{equation*}
Notice that
\begin{equation*}
\frac{1}{p_k}=\frac{1}{p_{k-1}}-\frac{2}{n}  \quad \hbox{, for } k\in\mathbb{N}
\end{equation*}
and that
\begin{equation*}
\frac{1}{p_{m-1}}-\frac{2}{n}>0  \quad \hbox{, and } \frac{1}{p_{m}}-\frac{2}{n}<0.
\end{equation*}

Let us denote
\begin{equation*}
u^{(j)}:=\partial_t^{j}u \quad \hbox{, for every  } j\in \{0,\cdots,2m+2\}.
\end{equation*}
By \eqref{0-s7} we have, for every $j\in \{0,\cdots,2m+2\}$,

\begin{equation}
\label{1-s7}
\left\{\begin{array}{ll}
\partial^2_{t}u^{(j)}-\mbox{div}\left(A(x)\nabla_x u^{(j)}\right)=\partial_t^{j}F, \quad \hbox{in } \Omega\times [0,T],\\[2mm]
u^{(j)}=0 \quad \hbox{, on } \partial\Omega\times [0,T],\\[2mm]
u^{(j)}(\cdot,0)=\partial_tu^{(j)}(\cdot,0)=0, \quad \hbox{in } \Omega.
\end{array}\right.
\end{equation}

Observe that since $u^{(2j+2)}=\partial^2_{t}u^{(2j)}$ by \eqref{1-s7} we have that $u^{(2j)}$ is the solution to the following Dirichlet elliptic problem
\begin{equation}
\label{1-s2}
\left\{\begin{array}{ll}
\mbox{div}\left(A(x)\nabla_x u^{(2j)}\right)=u^{(2j+2)}-\partial_t^{2j}F  \quad \hbox{, in } \Omega,\\[2mm]
u^{(2j)}\in H^1_0(\Omega),
\end{array}\right.
\end{equation}
hence by Proposition \ref{prop1-s1} we have, for every $j\in \{1,\cdots,m\}$ and $t\in(0,T)$,
\begin{equation}
\label{2-s2}
\left\Vert u^{(2j)}(\cdot,t)\right\Vert_{W^{2,p_{m-j}}(\Omega)}\leq C\rho_0^2\left(\left\Vert u^{(2j+2)}(\cdot,t)\right\Vert_{L^{p_{m-j}}(\Omega)}+F_{2j}\right),
\end{equation}
where $C$ depends on $\lambda,\Lambda,E$ and $M$ only. Hence by Sobolev imbedding theorem we get, for every $j\in \{2,\cdots,m\}$ and $t\in(0,T)$,
\begin{equation}
\label{4-s8}
\left\Vert u^{(2j)}(\cdot,t)\right\Vert_{L^{p_{m-j+1}}(\Omega)}\leq C_0\rho_0^2\left(\left\Vert u^{(2j+2)}(\cdot,t)\right\Vert_{L^{p_{m-j}}(\Omega)}+F_{2j}\right),
\end{equation}
and
\begin{equation}
\label{4-s8-New}
\left\Vert u^{(2)}(\cdot,t)\right\Vert_{L^{\infty}(\Omega)}\leq C_0\rho_0^2\left(\left\Vert u^{(4)}(\cdot,t)\right\Vert_{L^{p_{m-2}}(\Omega)}+F_{2}\right),
\end{equation}
where $C_0\geq 1$ depends on $n,\lambda,\Lambda,E$ and $M$ only.
Now by applying Proposition \ref{prop-s6} to $u^{(2m+2)}$ we have, for every $t\in(0,T)$,
\begin{equation}
\label{4-s8-m}
\left\Vert u^{(2m+2)}(\cdot,t)\right\Vert_{L^{p_{0}}(\Omega)}\leq C_1\rho_0TF_{2m+2},
\end{equation}
where $C_1\geq 1$ depends on $n,\lambda,E$ and $M$ only.
Therefore, by iterating \eqref{4-s8} and by \eqref{4-s8-New} and \eqref{4-s8-m} we get, for every $t\in(0,T)$,

\begin{equation}
\label{2-s10}
\left\Vert u^{(2)}(\cdot,t)\right\Vert_{L^{\infty}(\Omega)}\leq C_1C_0^m\left(\rho^{2m+1}_0TF_{2m+2}+\sum^m_{j=0}\rho^{2j}_0F_{2j}\right),
\end{equation}

Now since $u=u^{(0)}$, by \eqref{1-s2} and by \cite[Theorem 8.33]{GT} we get, for every $t\in(0,T)$,
\begin{equation}
\label{1-s12}
\left\Vert u(\cdot,t)\right\Vert_{C^{1,\alpha}(\Omega)}\leq C\rho_0^2\left(\left\Vert u^{(2)}(\cdot,t)\right\Vert_{L^{\infty}(\Omega)}+F_{0}\right),
\end{equation}
where $C$ depends on $\alpha, n, E, M,\lambda$ and $\Lambda$ only.
By \eqref{1-s12} and \eqref{2-s10} we obtain \eqref{2-s12} in the case a.

\textbf{Case (b)}
We consider only the case $n>2$, because if $n=2$ we can proceed similarly.

If $n$ is of the type $4h+2$, $h\in\mathbb{N}\cup\{0\}$ then inequality \eqref{2-s2} continues to hold, but by Sobolev imbedding Theorem, instead of inequality \eqref{4-s8-New} we have, for every $q\in[2,\infty)$,
\begin{equation}
\label{4-s8-New-q}
\left\Vert u^{(4)}(\cdot,t)\right\Vert_{L^{q}(\Omega)}\leq C_2\rho_0^2\left(\left\Vert u^{(6)}(\cdot,t)\right\Vert_{L^{p_{m-1}}(\Omega)}+F_{4}\right),
\end{equation}
where $C_2\geq 1$ depends on $n,\lambda,\Lambda,E, M$ and $q$ only.

Let us choose $q>\frac{n}{2}$, by applying \cite[Theorem 8.29]{GT} to $u^{(2)}(\cdot,t)$ we have
\begin{equation}
\label{2-s11}
\left\Vert u^{(2)}(\cdot,t)\right\Vert_{L^{\infty}(\Omega)}\leq C_3\rho_0^2\left(\left\Vert u^{(4)}(\cdot,t)\right\Vert_{L^{q}(\Omega)}+F_{2}\right),
\end{equation}
where $C_3\geq 1$ depends on $n,\lambda, E, M$ and $q$.

Now, by iterating \eqref{2-s2} and by using \eqref{4-s8-New-q} and \eqref{2-s11} we get
\begin{equation}
\label{4-s11}
\left\Vert u^{(2)}(\cdot,t)\right\Vert_{L^{\infty}(\Omega)}\leq C_2C_3C_0^{m-1}\left(\rho^{2m+1}_0TF_{2m+2}+\sum^m_{j=0}\rho^{2j}_0F_{2j}\right)
\end{equation}
and arguing as in the case (a) the thesis follows.$\square$

\subsection{Proof of Propositions \ref{pag56-62}, \ref{iperb-elliptic}}\label{proofFBI}

\textbf{Proof of Proposition \ref{pag56-62}}

We prove \eqref{1-60} for $j=0$, the proof for $j>0$ being the same.

By \eqref{defFBI} we have

\begin{gather*}
\sqrt{2\pi}\left\vert U^{(\tau)}_{\mu}(x,y)\right\vert=\left\vert\sqrt{\mu}\int^T_0e^{-\frac{\mu}{2}(iy+\tau-t)^2}u(x,t)dt\right\vert\leq\\
\leq \sqrt{\mu}e^{\frac{\mu}{2}y^2}\int^T_0e^{-\frac{\mu}{2}(\tau-t)^2}\left\vert u(x,t)\right\vert dt,
\end{gather*}
hence, the Schwarz inequality yields

\begin{gather*}
\sqrt{2\pi}\left\vert U^{(\tau)}_{\mu}(x,y)\right\vert
\leq \sqrt{\mu}e^{\frac{\mu}{2}y^2}\left(\int^T_0e^{-\mu(\tau-t)^2}dt\right)^{1/2}\left(\int^T_0\left\vert u(x,t)\right\vert^2 dt\right)^{1/2}\leq \\ \leq \sqrt{\mu}e^{\frac{\mu}{2}y^2}\left(\int^{+\infty}_0e^{-\mu t^2}dt\right)^{1/2}\left(\int^T_0\left\vert u(x,t)\right\vert^2 dt\right)^{1/2}\leq c\mu^{1/4} e^{\frac{\mu}{2}y^2}\left(\int^T_0\left\vert u(x,t)\right\vert^2dt\right)^{1/2}.
\end{gather*}
Now we prove \eqref{2-59}. By the change of variable $\eta=\sqrt{\mu}(t-\tau)$ we have
\begin{gather}\label{2-56}
\sqrt{2\pi}\left(U^{(\tau)}_{\mu}(x,0)-u(x,\tau)\right)=\\ \nonumber
=\sqrt{\mu}\int^T_0e^{-\frac{\mu}{2}(\tau-t)^2}u(x,t)dt-u(x,\tau)
\int^{+\infty}_{-\infty}e^{-\frac{\eta^2}{2}}d\eta=\\ \nonumber
=\int^{\sqrt{\mu}(T-\tau)}_{-\sqrt{\mu}\tau}e^{-\frac{\eta^2}{2}}u\left(x,\tau+\frac{\eta}{\sqrt{\mu}}\right)d\eta-
u(x,\tau)\int^{+\infty}_{-\infty}e^{-\frac{\eta^2}{2}}d\eta=\\ \nonumber
=\int^{\sqrt{\mu}(T-\tau)}_{-\sqrt{\mu}\tau}e^{-\frac{\eta^2}{2}}\left(u\left(x,\tau+\frac{\eta}{\sqrt{\mu}}\right)-u(x,\tau)\right)d\eta-\\ \nonumber
-u(x,\tau)\left(\int^{+\infty}_{\sqrt{\mu}(T-\tau)}e^{-\frac{\eta^2}{2}}d\eta+\int^{-\sqrt{\mu}\tau}_{-\infty}e^{-\frac{\eta^2}{2}}d\eta\right):=I_1+I_2.
\end{gather}
We begin to estimate $\left\vert I_1\right\vert$. We have

\begin{gather}
\label{1-58}
\left\vert I_1\right\vert\leq
\mu^{-1/2}\left\Vert\partial_t u(x,\cdot)\right\Vert_{L^{\infty}[0,T]}\int^{+\infty}_{-\infty} |\eta| e^{-\frac{\eta^2}{2}}d\eta \leq \\ \nonumber
\leq c\mu^{-1/2}\left\Vert\partial_t u(x,\cdot)\right\Vert_{L^{\infty}[0,T]},
\end{gather}
where $c$ is an absolute constant.

Now we estimate $\left\vert I_2\right\vert$. Taking into account that $\tau\in(0,T/2)$ we have

\begin{gather}\label{2-58}
\left\vert I_2\right\vert\leq 2\left\vert u(x,\tau)\right\vert\int^{+\infty}_{\sqrt{\mu}\tau}e^{-\frac{\eta^2}{2}}d\eta\leq \\ \nonumber
\leq 2\left\vert u(x,\tau)\right\vert e^{-\frac{\mu}{4}\tau^2}\int^{+\infty}_{\sqrt{\mu}\tau}e^{-\frac{\eta^2}{4}}d\eta\leq c e^{-\frac{\mu}{4}\tau^2}\left\vert u(x,\tau)\right\vert,
\end{gather}
where $c$ is an absolute constant.

Now, since $u(x,0)=0$ we have
\begin{gather}\label{2-58-new1}
e^{-\frac{\mu}{4}\tau^2}\left\vert u(x,\tau)\right\vert\leq \\ \nonumber \leq e^{-\frac{\mu}{4}\tau^2}\tau\left\Vert\partial_t u(x,\cdot)\right\Vert_{L^{\infty}[0,T]}\leq2e^{-1}\mu^{-1/2}\left\Vert\partial_t u(x,\cdot)\right\Vert_{L^{\infty}[0,T]}.
\end{gather}

By \eqref{2-56}, \eqref{1-58}, \eqref{2-58} and \eqref{2-58-new1} we get \eqref{2-59}.$\square$

\bigskip

\textbf{Proof of Proposition \ref{iperb-elliptic}.}
We have
\begin{gather*}
\partial_yU_{\mu}(x,y)=\sqrt{\frac{\mu}{2\pi}}\int^T_0-i\mu(iy+\tau-t)e^{-\frac{\mu}{2}(iy+\tau-t)^2}u(x,t)dt=\\
=\sqrt{\frac{\mu}{2\pi}}\int^T_0
-i\partial_t\left(e^{-\frac{\mu}{2}(iy+\tau-t)^2}\right)u(x,t)dt=\\
=-i\sqrt{\frac{\mu}{2\pi}}\left(e^{-\frac{\mu}{2}(iy+\tau-T)^2}u(x,T)-\int^T_0
e^{-\frac{\mu}{2}(iy+\tau-t)^2}\partial_t u(x,t)dt\right)
\end{gather*}
and similarly
\begin{gather}\label{pag62}
\partial^2_yU_{\mu}(x,y)=\sqrt{\frac{\mu}{2\pi}}e^{-\frac{\mu}{2}(iy+\tau-T)^2}\left(\partial_tu(x,T)-\mu(iy+\tau-T)u(x,T)\right)-\\ \nonumber
-\sqrt{\frac{\mu}{2\pi}}\int^T_0
e^{-\frac{\mu}{2}(iy+\tau-t)^2}\partial^2_t u(x,t)dt.
\end{gather}
On the other side by \eqref{1-56} we have
\begin{gather}\label{pag62-1}
\mbox{div}\left(A(x)\nabla U_{\mu}\right)=\sqrt{\frac{\mu}{2\pi}}\int^T_0
e^{-\frac{\mu}{2}(iy+\tau-t)^2}\mbox{div}\left(A(x)\nabla u\right)dt=\\ \nonumber
=\sqrt{\frac{\mu}{2\pi}}\int^T_0
e^{-\frac{\mu}{2}(iy+\tau-t)^2}\partial^2_t u(x,t)dt.
\end{gather}
By \eqref{pag62} and \eqref{pag62-1} the thesis follows.$\square$

\subsection{Proof of Theorem \ref{new three sphere}}\label{App-3sphere}

In the sequel, for seek of brevity  we omit the tilde over $r_j$, $j=1,2,3$.

First we consider the homogeneous case in which $\widetilde{f}=0$ and we assume that $r_3=1$.  In \cite[Theorem 4.5]{M-R-V1} it has been proved that
there exists a positive number $\overline{\beta}$ depending on $\lambda_0, \Lambda_0$ only such that if $\beta>\overline{\beta}$, then there exist
constants $C$, $\tau_1$ and $r_0$, ($C\geq 1$, $\tau_1\geq 1$, $0<r_0\leq 1$) depending only on $\lambda_0,\Lambda_0$
and $\beta$ such that the following estimate holds true
\begin{gather}
\label{1-49r}
\tau\int\left\vert X \right\vert^{\beta}e^{2\tau\left\vert X \right\vert^{-\beta}}|\nabla v|^2+\tau^3\int\left\vert X \right\vert^{-\beta-2}e^{2\tau\left\vert X \right\vert^{-\beta}}|v|^2\leq \\ \nonumber
\leq C\int\left\vert X \right\vert^{2\beta+2}e^{\left\vert X \right\vert^{-\beta}}
\left\vert P v \right\vert^2,
\end{gather}
for every $v\in C_0^\infty\left(
B_{r_0}\setminus\{0\}\right)$ and for every $\tau\geq\tau_1$.

On the other hand it is simple to check that there exists $\widetilde{\beta}$ depending on $\lambda_0, \Lambda_0$ only such that if $\beta\geq \widetilde{\beta}$ then $\left\vert X \right\vert^{-\beta}$ satisfies the pseudoconvexity conditions of \cite[Theorem 8.3.1]{hormanderbook1} in $B_1\setminus \overline{B}_{r_0/2}$. Therefore there exist $\tau_2\geq\tau_1$ and $C$ depending on $\lambda_0,\Lambda_0$
and $\beta$ only such that
\begin{gather}
\label{1-50r}
\tau\int e^{2\tau\left\vert X \right\vert^{-\beta}}|\nabla v|^2+\tau^3\int e^{2\tau\left\vert X \right\vert^{-\beta}}|v|^2\leq \\ \nonumber
\leq C\int e^{\left\vert X \right\vert^{-\beta}}
\left\vert Pv \right\vert^2,
\end{gather}
for every $v\in C_0^\infty\left(
B_1\setminus \overline{B}_{r_0/2}\right)$ and for every $\tau\geq\tau_2$.

Now we have trivially
\begin{subequations}
\label{50r}
\begin{equation}
\label{50ra}
\int e^{2\tau\left\vert X \right\vert^{-\beta}}|\nabla v|^2\geq \int\left\vert X \right\vert^{\beta}e^{2\tau\left\vert X \right\vert^{-\beta}}|\nabla v|^2,
\end{equation}
\begin{equation}
\label{50rb}
\int e^{2\tau\left\vert X \right\vert^{-\beta}}|v|^2\geq (r_0/2)^{\beta+2}\int\left\vert X \right\vert^{-\beta-2}e^{2\tau\left\vert X \right\vert^{-\beta}}|v|^2,
\end{equation}
\begin{equation}
\label{50rc}
\int e^{\left\vert X \right\vert^{-\beta}}
\left\vert Pv \right\vert^2 \leq \frac{1}{(r_0/2)^{2\beta+2}}\int\left\vert X \right\vert^{2\beta+2}e^{\left\vert X \right\vert^{-\beta}}
\left\vert Pv \right\vert^2,
\end{equation}
\end{subequations}
for every $v\in C_0^\infty\left(
B_1\setminus \overline{B}_{r_0/2}\right)$.
Let $\zeta\in C_0^\infty\left(B_{r_0}\right)$ such that $0\leq \zeta\leq 1$, $|\nabla \zeta|, |D^2\zeta|\leq C$ and $\zeta(X)=1$ for every $X\in B_{r_0/2}$.

Now let us denote $\beta_1:=\max\{\overline{\beta},\widetilde{\beta},1\}$ and let $v\in C_0^\infty\left(
B_{1}\setminus\{0\}\right)$. By applying  \eqref{1-49r} and \eqref{1-50r} to $\zeta v$ and $(1-\zeta)v$ respectively and taking into account \eqref{50r} we have, for $\beta\geq \beta_1$
\begin{gather}
\label{1-49r}
\tau\int\left\vert X \right\vert^{\beta}e^{2\tau\left\vert X \right\vert^{-\beta}}|\nabla v|^2+\tau^3\int\left\vert X \right\vert^{-\beta-2}e^{2\tau\left\vert X \right\vert^{-\beta}}|v|^2\leq \\ \nonumber
\leq C\int\left\vert X \right\vert^{2\beta+2}e^{\left\vert X \right\vert^{-\beta}}
\left\vert Pv \right\vert^2+C\int\left\vert X \right\vert^{2\beta+2}e^{\left\vert X \right\vert^{-\beta}}
\left(|D^2\zeta|^2v^2+|\nabla \zeta|^2|\nabla v|^2\right).
\end{gather}
Now the second term at the right hand side can be absorbed by the left hand side. Hence there exists $\tau_3\geq\tau_2$ and $C$ depending on $\lambda_0,\Lambda_0$ and $\beta$ only such that for every $v\in C_0^\infty\left(B_{1}\setminus\{0\}\right)$ and every $\tau\geq\tau_3$ the following inequality holds true
\begin{gather}
\label{1-51r}
\tau\int\left\vert X \right\vert^{\beta}e^{2\tau\left\vert X \right\vert^{-\beta}}|\nabla v|^2+\tau^3\int\left\vert X \right\vert^{-\beta-2}e^{2\tau\left\vert X \right\vert^{-\beta}}|u|^2\leq \\ \nonumber
\leq C\int\left\vert X \right\vert^{2\beta+2}e^{\left\vert X \right\vert^{-\beta}}
\left\vert Pv \right\vert^2.
\end{gather}
Now we use a standard argument to derive by \eqref{1-51r} the desired three sphere inequality.
%For the convenience of the reader we recall such procedure.

First we observe that, by density, estimate \eqref{1-51r} holds true for every $v\in H^2_0\left(B_{1}\setminus\{0\}\right)$. Now let $u\in H^1\left(B_{1}\right)$ a solution to equation $Pu=0$. By $L^2$ regularity theorem we have that $u\in H^2_{loc}\left(B_{1}\right)$. Let $0<r_1\leq r_2<1$, $0<\delta\leq\min\left\{\frac{1-r_2}{2},\frac{1}{2}\right\}$ and let us consider a cutoff function $\eta\in C_0^2\left(
B_{1-\delta}\setminus \overline{B}_{r_1(1-2\delta)}\right)$ such that $0\leq \eta\leq 1$ and satisfying the following conditions

\begin{equation*}
\eta=1 \quad\hbox{, in } B_{1-2\delta}\setminus B_{r_1(1-\delta)},
\end{equation*}
\begin{equation*}
|\nabla\eta|\leq\frac{c}{\delta r_1}\quad\hbox{, } |D^2\eta|\leq\frac{c}{\delta^2 r_1^2} \quad\hbox{, in } B_{r_1(1-\delta)}\setminus B_{r_1(1-2\delta)}
\end{equation*}
and
\begin{equation*}
|\nabla\eta|\leq\frac{c}{\delta}\quad\hbox{, } |D^2\eta|\leq\frac{c}{\delta^2} \quad\hbox{, in } B_{1-\delta}\setminus B_{1-2\delta},
\end{equation*}
where $c$ is an absolute constant.

By \eqref{equaz-49r}, since $\widetilde{f}=0$ we have
\begin{equation*}
|P(\eta u)|\leq C\left(|\nabla\eta||\nabla u|+|P\eta||u|\right),
\end{equation*}

\begin{gather}
\label{52r-53r}
\int\left\vert X \right\vert^{2\beta+2}e^{\left\vert X \right\vert^{-\beta}}
\left\vert P(\eta u) \right\vert^2\leq Ce^{2\tau\left((1-2\delta)r_1\right)^{-\beta}}r_1^{2\beta+2} \times\\ \nonumber
\times\left[\int_{B_{r_1(1-\delta)}\setminus B_{r_1(1-2\delta)}}\left((\delta r_1)^{-2}|\nabla u|^2+(\delta r_1)^{-4}| u|^2\right)\right]+\\ \nonumber
+Ce^{2\tau\left(1-2\delta\right)^{-\beta}}\left[\int_{B_{1-\delta}\setminus B_{1-2\delta}}\left(\delta^{-2}|\nabla u|^2+\delta ^{-4}| u|^2\right)\right],
\end{gather}
where $C$ depends on $\lambda_0,\Lambda_0$ and $\beta$ only.

By applying the Caccioppoli inequality to the right hand side of  \eqref{52r-53r} and by \eqref{1-51r} we have, for every $\tau\geq\tau_3$

\begin{gather}
\label{1-54r}
\int_{B_{r_2}}\left\vert X \right\vert^{-\beta-2}e^{2\tau\left\vert X \right\vert^{-\beta}}|u\eta|^2\leq Ce^{2\tau\left((1-2\delta)r_1\right)^{-\beta}}r_1^{2\beta-2}\delta^{-4}\int_{B_{r_1}}|u|^2+\\ \nonumber
+Ce^{2\tau\left(1-2\delta\right)^{-\beta}}\delta^{-4}\int_{B_{1}}|u|^2,
\end{gather}
where $C$ depends on $\lambda_0,\Lambda_0$ and $\beta$ only.

On the other hand we have trivially
\begin{equation}
\label{54r-55r}
\int_{B_{r_2}}\left\vert X \right\vert^{-\beta-2}e^{2\tau\left\vert X \right\vert^{-\beta}}|u\eta|^2\geq r_2^{-\beta-2}e^{2\tau r_2^{-\beta}}\int_{B_{r_2}\setminus B_{r_1}}|u|^2.
\end{equation}
Now let us denote

\begin{equation}
\label{Notation55r}
\epsilon:=\left(\int_{B_{r_1}}|u|^2\right)^{1/2} \hbox{, and } K:=\left(\int_{B_{1}}|u|^2\right)^{1/2}.
\end{equation}

By \eqref{1-54r} and \eqref{54r-55r} we have for every $\tau\geq\tau_3$

\begin{gather}
\label{New1-55r}
\int_{B_{r_2}\setminus B_{r_1}}|u|^2\leq \\ \nonumber \leq C\delta^{-4}\left\{ e^{2\tau\left[\left((1-2\delta)r_1\right)^{-\beta}-r_2^{-\beta}\right]}\epsilon^2+e^{2\tau\left[\left((1-\delta)\right)
^{-\beta}-r_2^{-\beta}\right]}K^2\right\},
\end{gather}
where $C$ depends on $\lambda_0,\Lambda_0$ and $\beta$ only.

Now we add to both the side of \eqref{New1-55r} the integral $\int_{B_{r_1}}u^2dX$ and we get
\begin{equation}
\label{1-55r}
\int_{B_{r_2}}|u|^2\leq C\delta^{-4}\left\{ e^{2\tau\left[\left((1-2\delta)r_1\right)^{-\beta}-r_2^{-\beta}\right]}\epsilon^2+e^{2\tau\left[\left((1-\delta)\right)
^{-\beta}-r_2^{-\beta}\right]}K^2\right\},
\end{equation}
where $C$ depends on $\lambda_0,\Lambda_0$ and $\beta$ only.

Now denote by $\overline{\tau}$ the number
\begin{equation}
\label{taubar}
\overline{\tau}=\frac{\log (\epsilon^{-1}K)}{\left((1-2\delta)r_1\right)^{-\beta}-(1-\delta)^{-\beta}}
\end{equation}
such a number satisfies the equality
\[e^{2\overline{\tau}\left[\left((1-2\delta)r_1\right)^{-\beta}-r_2^{-\beta}\right]}\epsilon^2=e^{2\overline{\tau}\left[\left((1-\delta)\right)
^{-\beta}-r_2^{-\beta}\right]}K^2.\]

If $\overline{\tau}\geq\tau_3$ then we choose $\tau=\overline{\tau}$ in \eqref{1-55r} and we obtain

\begin{equation}
\label{1-56r}
\int_{B_{r_2}}|u|^2\leq C\delta^{-4}K^{2(1-\vartheta)}\epsilon^{\vartheta},
\end{equation}
where
\begin{equation*}
\vartheta=\frac{r_2^{-\beta}-(1-\delta)^{-\beta}}{\left[(1-2\delta)r_1\right]^{-\beta}-(1-\delta)^{-\beta}}.
\end{equation*}

On the other side if $\overline{\tau}<\tau_3$ then by \eqref{taubar} we have
\begin{equation}
(\epsilon^{-1}K)^{2\vartheta}<e^{2\tau_3[r_2^{-\beta}-(1-\delta)^{-\beta}]}
\end{equation}
hence we have trivially
\begin{equation}
\label{1-57r}
\int_{B_{r_2}}|u|^2\leq \int_{B_{1}}|u|^2=K^2=K^{2(1-\vartheta)}K^{2\vartheta}\leq e^{2\tau_3[r_2^{-\beta}-(1-\delta)^{-\beta}]}K^{2(1-\vartheta)}\epsilon^{2\vartheta}.
\end{equation}
Therefore by \eqref{1-56r} and \eqref{1-57r} we have
\begin{equation}
\label{1-56r}
\int_{B_{r_2}}|u|^2\leq C\delta^{-4}e^{2\tau_3[r_2^{-\beta}-(1-\delta)^{-\beta}]}K^{2(1-\vartheta)}\epsilon^{\vartheta}.
\end{equation}

In the nonhomogeneous case, let $u\in H^1(B_1)$ a solution to $Pu=\widetilde{f}$ and let $w$ be the solution to the Dirichlet problem

\begin{equation*}
\left\{\begin{array}{ll}
Pw=\widetilde{f}, \quad \hbox{in } B_1,\\[2mm]
w\in H^1_0(B_1),
\end{array}\right.
\end{equation*}
we have that
\begin{equation}
\label{5gennaio-1}
\int_{B_{1}}|w|^2\leq C\int_{B_{1}}|\widetilde{f}|^2,
\end{equation}
where $C$ depends on $\lambda_0$. By applying \eqref{1-56r} to the function $u-w$ and by \eqref{5gennaio-1} we obtain inequality \eqref{1-58r} when $\widetilde{r}_3=1$. Finally, by using the dilation $X\rightarrow r_3X$ the thesis follows easily.$\Box$

\bigskip

\textbf{Acknowledgment}

\bigskip

I wish to express my gratitude to Professor Victor Isakov for stimulating
discussions on the problem.

\end{document}